\numberwithin{equation}{section}
\theoremstyle{plain}
\newtheorem{thm}{Theorem}[section]
\newtheorem{prop}[thm]{Proposition}
\newtheorem{cor}[thm]{Corollary}
\newtheorem{df}[thm]{Definition}
\newtheorem{ex}[thm]{Example}
\newtheorem{rem}[thm]{Remark}
\newcommand{\ds}{\displaystyle}
\newcommand{\Hp}{\ensuremath{H^2_+}}
\newcommand{\Hpn}{\ensuremath{\left(\Hp\right)^n}}
\newcommand{\Hm}{\ensuremath{H^2_-}}
\newcommand{\Hpm}{\ensuremath{H^2_\pm}}
\newcommand{\Hpmn}{\ensuremath{\left(\Hpm\right)^n}}
\newcommand{\Linf}{\ensuremath{L^\infty}}
\newcommand{\Linfn}{\ensuremath{\left(\Linf\right)^{n\times n}}}
\newcommand{\Hinf}{\ensuremath{H^\infty}}
\newcommand{\TT}{\ensuremath{\mathbb{T}}}
\newcommand{\DD}{\ensuremath{\mathbb{D}}}
\newcommand{\CC}{\ensuremath{\mathbb{C}}}
\newcommand{\CCp}{\ensuremath{\mathbb{C}^+}}
\newcommand{\CCm}{\ensuremath{\mathbb{C}^-}}
\newcommand{\ZZ}{\ensuremath{\mathbb{Z}}}
\newcommand{\NNN}{\ensuremath{\mathbb{N}}}
\renewcommand{\AA}{\ensuremath{\mathcal{A}}}
\newcommand{\BB}{\ensuremath{\mathcal{B}}}
\newcommand{\BBo}{\ensuremath{{\mathcal{B}_0}}}
\newcommand{\CCC}{\ensuremath{\mathcal{C}}}
\newcommand{\OO}{\ensuremath{\mathcal{O}}}
\newcommand{\GG}{\ensuremath{\mathcal{G}}}
\newcommand{\HH}{\ensuremath{\mathcal{H}}}
\newcommand{\RR}{\ensuremath{\mathcal{R}}}
\newcommand{\UU}{\ensuremath{\mathcal{U}}}
\newcommand{\KK}{\ensuremath{\mathcal{K}}}
\newcommand{\KKmin}{\ensuremath{\mathcal{K}_{min}}}
\newcommand{\NN}{\ensuremath{\mathcal{N}^+}}
\newcommand{\Cg}{\ensuremath{C_{\ol g}}}
\newcommand{\ol}[1]{\ensuremath{\overline{#1}}}
\newcommand{\annot}[2]{\underbrace{#1}_{\smash[b]{\scriptstyle #2}}}
\newcommand{\bs}{\backslash}
\newcommand{\zero}{\ensuremath{\{0\}}}
\newcommand{\spn}{\ensuremath{\operatorname{span}}}
\renewcommand{\Re}{\ensuremath{\operatorname{Re}}}
\newcommand{\kktil}{\ensuremath{\tilde k^\theta_\lambda}}
\newcommand{\kk}{\ensuremath{k^\theta_\lambda}}
\newcommand{\kktilo}{\ensuremath{\tilde k^\theta_0}}
\newcommand{\kko}{\ensuremath{k^\theta_0}}
\newcommand{\gnotzero}{\ensuremath{g\in\Linf\bs\zero}}
\renewcommand{\subset}{\subseteq}
\renewcommand{\supset}{\supseteq}
\begin{document}

\begin{frontmatter}

\title{Maximal functions, conjugations and multipliers between Toeplitz kernels}

\author[a]{M. Cristina C\^{a}mara\corref{cor1}}
\ead{cristina.camara@tecnico.ulisboa.pt}

\author[a]{C. Carteiro}
\ead{carlos.carteiro@tecnico.ulisboa.pt}

\author[a,b]{C. Diogo}
\ead{cristina.diogo@iscte-iul.pt}

\affiliation[a]{
	organization={Center for Mathematical Analysis, Geometry and Dynamical Systems, Department of Mathematics, Instituto Superior Tecnico, Universidade de Lisboa},
	addressline={Av. Rovisco Pais},
	postcode={1049-001},
	city={Lisboa},
	country={Portugal}
}

\affiliation[b]{
	organization={ISCTE - Lisbon University Institute},
	addressline={Av. das Forças Armadas},
	postcode={1649-026},
	city={Lisboa},
	country={Portugal}
}

\cortext[cor1]{Corresponding author}

\begin{abstract}
Toeplitz kernels can be defined by Riemann-Hilbert problems, by maximal functions, or by multipliers acting on model spaces.
In this paper we study those different characterisations and their relations, highlighting, on the one hand, the crucial role played by symbol factorisation in obtaining multipliers from a model space onto a Toeplitz kernel, in particular isometric multipliers, and, on the other hand, a deep connection of maximal functions with a naturally defined conjugation on the Toeplitz kernel.
\end{abstract}

\begin{keyword}
Toeplitz kernels \sep maximal functions \sep multipliers \sep conjugations \sep symbol factorisation \sep model spaces
\MSC[2020] 47A68 \sep 47B35
\end{keyword}

\end{frontmatter}

\section{Introduction: Sarason's question}
\label{sec:intro}

When is a closed subspace of $H^2(\DD)$ a Toeplitz kernel, i.e., the kernel of a bounded Toeplitz operator? To this question, asked by Sarason in \cite{Sa94} where he revisits and reformulates the results of Hayashi in \cite{Ha85,Ha86,Ha90}, various answers can be given by looking at it from different perspectives which complement and clarify one another.

\subsection{Toeplitz kernels and Riemann-Hilbert problems}
\label{subsec:TK as RH problems}

Let \TT\ be the unit circle, $\Linf := \Linf(\TT)$, $\Hp:=H^2(\DD)$ denote the Hardy space of the unit disk and let $\Hm := \left(\Hp\right)^\bot = \bar z \ol \Hp$.
For $G \in \Linfn$, consider the Riemann-Hilbert (RH for short) boundary value problem on \TT\ with matrix coefficient $G$
\begin{equation}
\label{eq:RH nxn}
G \phi_+ = \phi_- , \quad  \phi_\pm \in \Hpmn.
\end{equation}

The solutions $\phi_+$ to the RH problem \eqref{eq:RH nxn} constitute the kernel of the Toeplitz operator with symbol $G$, defined by
\begin{equation}
\label{def:Toeplitz operator}
T_G: \Hpn \to \Hpn, \quad T_G \phi_+ = P^+ G \phi_+,
\end{equation}
where $P^+$ denotes the orthogonal projection from $L^2$ onto \Hp, applied componentwise to $G\phi_+$.
We have thus the following characterisation:

\begin{enumerate}
\item[(i)] a closed subspace $\KK\subset\Hpn$ is a Toeplitz kernel if and only if there exists $G\in\Linfn$ such that $\KK$ coincides with the space of solutions $\phi_+$ of the RH problem \eqref{eq:RH nxn}.
In that case $\KK=\ker T_G$.
\end{enumerate} 

This characterisation, which is valid both in the scalar and in the matricial cases and can moreover be extended to Toeplitz operators defined in the Hardy spaces $H^p\left(\DD\right)$, $1<p<\infty$, will be frequently used in this work.
Not only is it very general, as it also allows for purely complex analytic methods to be applied when studying Toeplitz operators.
Model spaces in \Hp, which are a particularly important class of Toeplitz kernels  (\cite{GarMashRoss16}), can be seen as consisting of the solutions $\phi_+\in\Hp$ of the particular RH problem 
\begin{equation*}
\ol \theta \phi_+ = \phi_-, \quad \phi_\pm \in \Hpm,
\end{equation*}
where $\theta$ is an inner function.

Many important properties of Toeplitz kernels can be obtained by taking this RH approach.
One such property is that Toeplitz kernels in \Hpn\ are {\em nearly $\eta$-invariant}, i.e.,
\begin{equation}
\label{eq:nearly_invariance}
\phi_+ \in \ker T_G, \; \eta\phi_+\in\Hpn \implies \eta\phi_+ \in \ker T_G,
\end{equation}
for $\eta$ in a wide class of complex valued functions, including all functions in $\ol\Hinf$ and all rational functions whose poles are in the closed unit disk (\cite{CP14}).

As a consequence of this property we can also characterise some subspaces of \Hpn\ which are {\em not} Toeplitz kernels.

\begin{prop}[{\cite[Thms. 3.5, 3.6]{CP14}}]
\label{prop:subspaces not TK}
If $\AA \subset f \cdot \Hpn$, where $f$ is an inner function or a rational function whose zeroes belong to the closed unit disk, then $\AA$ is \emph{not} a Toeplitz kernel.
\end{prop}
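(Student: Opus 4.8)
The plan is to argue by contradiction: assume $\AA\neq\zero$ (the zero subspace being trivially a kernel) is a Toeplitz kernel, say $\AA=\ker T_G$ with $G\in\Linfn$, and extract from the inclusion $\AA\subset f\cdot\Hpn$ an obstruction via the near $\eta$-invariance \eqref{eq:nearly_invariance}. The function to feed into \eqref{eq:nearly_invariance} is $\eta=1/f$: when $f$ is inner this is $\eta=\ol f\in\ol\Hinf$, and when $f$ is rational with all zeros in $\ol\DD$ the poles of $1/f$ are exactly those zeros, so $\eta$ is a rational function with poles in $\ol\DD$. In either case $\eta$ lies in the admissible class of \eqref{eq:nearly_invariance}.

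First I would iterate. Fix $\phi_+\in\AA$. Since $\AA\subset f\cdot\Hpn$ we may write $\phi_+=f\psi$ with $\psi\in\Hpn$, and then $\eta\phi_+=\psi\in\Hpn$ (because $\ol f f=|f|^2=1$ a.e.\ in the inner case and $f^{-1}f=1$ in the rational case). Near-invariance \eqref{eq:nearly_invariance} gives $\psi\in\ker T_G=\AA$, and since again $\psi\in\AA\subset f\cdot\Hpn$ the argument repeats. By induction $\phi_+\in f^n\cdot\Hpn$ for every $n\in\NNN$, so that
\[
\AA\subset\bigcap_{n\in\NNN} f^n\cdot\Hpn .
\]

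Next I would show this intersection is trivial, forcing $\AA=\zero$. Working componentwise it suffices to treat $\bigcap_n f^n\Hp$. If $f$ is a non-constant inner function, a nonzero $h\in\Hp$ divisible by $f^n$ for all $n$ would have inner part divisible by $f^n$ for all $n$, which is impossible: $f$ contributes either a zero in $\DD$ of fixed positive multiplicity (forcing $h$ to vanish to infinite order at an interior point) or a nonzero singular measure (forcing infinite singular mass), neither compatible with $h\neq0$. If $f$ is rational with a zero $a\in\DD$, then $h\in f^n\Hp$ forces a zero of order $\ge n$ at $a$ for all $n$, hence $h\equiv0$. In both situations $\bigcap_n f^n\Hp=\zero$ and we are done.

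The delicate point, and the one I expect to be the real obstacle, is a zero of $f$ on the unit circle. There $\log|f|\in L^1(\TT)$ stays finite at the zero and $\bigcap_n f^n\Hp$ is genuinely nontrivial: one can build a nonzero outer function vanishing to ``infinite order'' at $a\in\TT$, with modulus decaying like $\exp(-|t-t_a|^{-1/2})$, which keeps $\log$-integrability while beating every power of $|z-a|$, so the intersection argument breaks down. To close this case I would use not a single function but the full structure of $\AA$: via the Hayashi--Hitt representation of a nonzero Toeplitz kernel as $\ker T_G=g\,K_\theta$ with $g$ outer and $\theta(0)=0$, the constant $1=\kko\in K_\theta$, whence $g\in\AA$; then $g\in\AA\subset f\cdot\Hp$ together with one application of \eqref{eq:nearly_invariance} gives $g/f\in\ker T_G=gK_\theta$, i.e.\ $g/f=g\,h$ with $h\in K_\theta$, and dividing by the nonzero $g$ yields $1/f=h\in K_\theta\subset\Hp$. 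This is absurd, since $1/f\notin\Hp$ in every case ($\ol f$ has nonzero negative spectrum when $f$ is inner; $1/f$ has a pole in $\ol\DD$ when $f$ is rational, with $\int_\TT|1/f|^2=\infty$ for a boundary zero). Since this route disposes of $1/f$ uniformly, it in fact settles the inner and interior-zero cases as well, so the cleanest write-up proves the whole statement through the representation $\ker T_G=gK_\theta$, keeping the elementary iteration above as motivation; the matricial case follows by the same reasoning applied to the vector analogue of that representation.
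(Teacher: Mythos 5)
The paper gives no proof of this proposition at all --- it is imported from \cite{CP14} and merely framed as a consequence of near $\eta$-invariance --- so there is no internal argument to measure you against; your near-invariance route is exactly the one the paper's framing points to. Within that route, most of what you do is correct: the iteration $\AA\subset\bigcap_m f^m\cdot\Hpn$ via $\eta=1/f$, the triviality of that intersection when $f$ is a non-constant inner function or has a zero in the open disk (which already settles those cases for all $n$, componentwise), and --- importantly --- your diagnosis that the intersection argument genuinely breaks down when the only zeros of $f$ lie on $\TT$, because an outer function can vanish to ``infinite order'' at a boundary point while keeping $\log$-integrability. Your scalar patch is also sound: extract the outer multiplier $u\in\ker T_g$ from the Hitt--Hayashi representation $\ker T_g=uK_{z\alpha}$ (Theorem \ref{thm:Hayashi_rep}), apply near invariance once to get $1/f\in K_{z\alpha}\subset\Hp$, and rule that out case by case.

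The genuine gap is your closing clause: ``the matricial case follows by the same reasoning applied to the vector analogue of that representation.'' No such analogue is available to invoke --- Hayashi's theorem is an intrinsically scalar structure result, and nearly $S^*$-invariant subspaces of $\Hpn$ do not in general factor as a single outer function times a model space --- so the one configuration your intersection argument cannot reach (namely $n\ge 2$ with all zeros of $f$ on $\TT$) is precisely the one your fallback does not cover either. Your suggested ``cleanest write-up,'' which routes the entire proof through $\ker T_G=gK_\theta$, would make matters worse by leaving the whole matricial statement unproved. A fix that stays inside your framework and avoids Hayashi altogether: your iteration shows $M_{1/f}$ maps the closed subspace $\AA$ into itself, so by the closed graph theorem it is bounded on $\AA$, say with norm $C$; if $a\in\TT$ is a zero of $f$ and $E_\varepsilon$ is an arc around $a$ on which $|f|\le\varepsilon$, then for $\phi\in\AA$ one gets $\varepsilon^{-2m}\int_{E_\varepsilon}|\phi|^2\le\|\phi/f^m\|^2\le C^{2m}\|\phi\|^2$, and choosing $\varepsilon<1/C$ and letting $m\to\infty$ forces $\phi$ to vanish a.e.\ on a set of positive measure, hence $\phi\equiv 0$. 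That closes the missing case and, run uniformly, would in fact dispose of the whole proposition without any appeal to the representation theory.
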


In particular, for $\eta = 1/z$ (i.e., $\eta = \ol z$ on \TT), \eqref{eq:nearly_invariance} is equivalent to
\begin{equation*}
\phi_+ \in \ker T_G, \phi_+(0) = 0 \implies S^*\phi_+ \in \ker T_G,
\end{equation*}
where $S^* = T_{\ol z I_{n\times n}}$ is the backward shift operator.
This is expressed by saying that Toeplitz kernels are {\em nearly $S^*$-invariant subspaces} of \Hpn.
In the context of \Hp, nearly $S^*$-invariant subspaces were introduced and characterised by D. Hitt in \cite{Hitt88}.
This study has since attracted great interest and has become a topic of research in its own right (see for instance \cite{ChaPar11} and references therein).

\subsection{Toeplitz kernels and maximal functions}
\label{subsec:TK maximal functions}

We now focus on Toeplitz operators defined on \Hp, with scalar symbols $g \in \Linf$.

Another important property of Toeplitz kernels that can be deduced from the RH description of those spaces is that, for every $f\in\Hp$, there exists a Toeplitz kernel containing $\{ f \}$, which is contained in every other Toeplitz kernel to which $f$ belongs.
It is called the \emph{minimal Toeplitz kernel for $f$} and denoted by $\KKmin (f)$; $f$ is said to be a \emph{maximal function} in $\KKmin (f)$.
Moreover, for every non-zero Toeplitz kernel \KK, one can find some maximal function $f^M$ and, if $f^M = IO$ is an inner-outer factorisation of $f^M$ (with $I$ inner and $O\in\Hp$ outer), we have that
\begin{equation}
\label{eq:f=IO.max.gives.unimodular.symbol}
\KK = \KKmin (f^M) = \ker T_{\ol{zI}\frac{\ol {O}}{O}}
\end{equation}
(\cite[Thm 5.1]{CP14}).
Thus any maximal function in a Toeplitz kernel determines the latter and provides, by inner-outer factorisation, a unimodular symbol for that kernel.

We have thus the following characterisation of Toeplitz kernels in terms of maximal functions.
\begin{enumerate}
\item[(ii)] a closed subspace $\KK \subset \Hp$ is a Toeplitz kernel if and only if there exists $f^M \in \KK$ with inner-outer factorisation $f^M = IO$ ($I$ inner and $O$ outer) such that, for all $\phi_+ \in \Hp$,
\begin{equation*}
\phi_+ \in \KK \iff \frac{\ol{f^M}}{O}\, \phi_+ \in \ol\Hp.
\end{equation*}
In that case, $\KK = \ker T_{\ol{z} \frac{\ol{f^M}}{O}} = \ker T_{\ol{zI}\frac{\ol {O}}{O}}$.
\end{enumerate}

Maximal functions in a Toeplitz kernel can be described as follows.

\begin{thm}[{\cite[Thm 2.2]{CP18_multipliers}}]
\label{thm: maximal function characterization}
Let $g\in \Linf\bs\{0\}$.
Then $f^M \in \Hp$ is a maximal  function in $\ker T_g$ if and only if 
\begin{equation}
\label{eq:maximal_function_characterization}
g f^M = \ol z \ol O , \quad \text{ for some } O \in \Hp \text{, outer}.
\end{equation}
\end{thm}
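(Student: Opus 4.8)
The plan is to reduce the whole statement to one structural fact about the inner part of a single, explicitly determined function. Since $T_g\phi_+=P^+(g\phi_+)$, membership $f^M\in\ker T_g$ means exactly $gf^M\in\Hm=\ol z\,\ol{\Hp}$, so we may write $gf^M=\ol z\,\ol h$ for a unique $h\in\Hp$ determined by $g$ and $f^M$. The theorem then asserts precisely that $f^M$ is maximal in $\ker T_g$ if and only if this $h$ is outer (in which case we set $O=h$). I will fix the inner--outer factorisation $f^M=I_1O_1$ and use two facts throughout: by characterisation (ii) applied to $\KKmin(f^M)$, for which $f^M$ is by definition maximal, one has $\phi_+\in\KKmin(f^M)\iff \frac{\ol{f^M}}{O_1}\,\phi_+\in\ol{\Hp}$; and $\KKmin(f^M)\subseteq\ker T_g$ always, since $f^M\in\ker T_g$ and $\KKmin(f^M)$ is the smallest Toeplitz kernel containing $f^M$. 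Hence maximality of $f^M$ is equivalent to the reverse inclusion $\ker T_g\subseteq\KKmin(f^M)$.

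For the forward implication I would argue by contraposition. Suppose $h=I_hO_h$ with $I_h$ a nonconstant inner function. First I would exhibit $I_hf^M$ as an element of $\ker T_g$: on \TT\ one computes $g(I_hf^M)=I_h(gf^M)=I_h\,\ol z\,\ol{I_h}\,\ol{O_h}=\ol z\,\ol{O_h}\in\Hm$, so $P^+(gI_hf^M)=0$. On the other hand $I_hf^M\notin\KKmin(f^M)$, because $\frac{\ol{f^M}}{O_1}(I_hf^M)=I_h\ol{O_1}$, and $I_h\ol{O_1}\in\ol{\Hp}$ would force $\ol{I_h}O_1\in\Hp$, i.e.\ $I_h$ to be an inner divisor of the outer function $O_1$, hence constant. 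Thus $\KKmin(f^M)\subsetneq\ker T_g$, contradicting maximality; so $h$ must be outer.

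For the converse, assume $gf^M=\ol z\,\ol O$ with $O$ outer. Then $f^M\in\ker T_g$ is immediate, and only $\ker T_g\subseteq\KKmin(f^M)$ remains. Given $\phi_+\in\ker T_g$, from $g=\ol z\,\ol O/f^M$ on \TT\ I would deduce $\ol z\,\ol O\,\phi_+/f^M\in\Hm$, i.e.\ $\ol O\,\phi_+=f^M\ol\psi$ for some $\psi\in\Hp$. Substituting $\phi_+=I_1O_1\ol\psi/\ol O$ and using $|I_1|^2=1$ gives $\frac{\ol{f^M}}{O_1}\,\phi_+=\ol{O_1}\,\ol\psi/\ol O=\ol{O_1\psi/O}$, so it suffices to show $O_1\psi/O\in\Hp$. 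Now $O_1\psi\in H^1\subseteq\NN$ and, since $O$ is outer, $1/O\in\NN$, whence $O_1\psi/O\in\NN$; moreover $\ol O\,\phi_+=f^M\ol\psi$ gives $|O_1\psi/O|=|\phi_+|\in L^2$ on \TT. By Smirnov's theorem a Smirnov-class function with $L^2$ boundary values lies in $\Hp$, so $O_1\psi/O\in\Hp$ and therefore $\phi_+\in\KKmin(f^M)$.

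The routine parts are the two symbol identities $g(I_hf^M)=\ol z\,\ol{O_h}$ and $\ol O\,\phi_+=f^M\ol\psi$; the real content lies in the converse. I expect the main obstacle to be the final step there, namely justifying that dividing the $H^1$ function $O_1\psi$ by the outer function $O$ stays in the Smirnov class \NN\ and then upgrading to $\Hp$ through the boundary-modulus identity $|O_1\psi/O|=|\phi_+|$ and Smirnov's maximum principle. The forward direction's only subtlety is spotting $I_hf^M$ as a witness in $\ker T_g\setminus\KKmin(f^M)$ and extracting the contradiction from the outerness of $O_1$.
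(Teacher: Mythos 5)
Your argument is correct. Note first that the paper itself gives no proof of this statement: it is quoted from \cite{CP18_multipliers}, and the source's argument runs through the equal-kernels criterion (Corollary \ref{cor:kernels_iguais} here): $f^M$ is maximal iff $\ker T_g=\KKmin(f^M)=\ker T_{\ol{zI_1}\,\ol{O_1}/O_1}$, which holds iff $g\big/\bigl(\ol{zI_1}\,\ol{O_1}/O_1\bigr)$ is a quotient $\ol{O'}/\ol{O''}$ of conjugates of outer functions, and this rearranges directly to $gf^M=\ol z\,\ol O$ with $O$ outer. You instead work at the level of membership conditions: your forward direction, exhibiting $I_hf^M$ as an explicit element of $\ker T_g\setminus\KKmin(f^M)$ when the inner factor $I_h$ of $h$ is nonconstant, is a clean and correct use of near invariance of Toeplitz kernels under multiplication by inner functions, and the outerness of $O_1$ does force $I_h$ constant if $\ol{I_h}O_1\in\Hp$. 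Your converse is in effect a hands-on re-proof of the relevant direction of the equal-kernels criterion: the identity $\ol O\,\phi_+=f^M\ol\psi$, the computation $\frac{\ol{f^M}}{O_1}\phi_+=\ol{O_1\psi/O}$, and the upgrade of $O_1\psi/O$ from $\NN\cap L^2$ to $\Hp$ via the Smirnov maximum principle are all valid. Both routes rest on the same two imported facts, namely $\KKmin(f)=\ker T_{\ol{zI}\,\ol O/O}$ from \eqref{eq:f=IO.max.gives.unimodular.symbol} and Smirnov-class bookkeeping; the cited proof buys brevity by packaging the Smirnov step inside Corollary \ref{cor:kernels_iguais}, while yours is more self-contained and makes the obstruction to maximality (the witness $I_hf^M$) explicit. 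The only loose end is the degenerate case $h=0$, which you should dismiss in a line: if $f^M\neq 0$ then $f^M\neq 0$ a.e., so $gf^M=0$ would force $g=0$, which is excluded.
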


\begin{ex} \label{ex:ktil_is_max_func}
Let $g=\ol \theta$, where $\theta$ is an inner function, and define, for any $\lambda\in\DD$,
\begin{equation}
\label{eq:def_k_k^til}
\kktil = \frac{\theta - \theta(\lambda)}{z-\lambda} \quad , \quad \kk = \frac{1-\ol{\theta(\lambda)}\theta}{1-\ol\lambda z}.
\end{equation}

Note that $\kktil\in\Hinf$ and $\kk \in \GG\Hinf$ (denoting by $\GG\AA$ the group of invertible elements in a unital algebra \AA), so \kk\ is outer and we have
\begin{equation*}
\ol\theta \kktil = \ol z \ol\kk.
\end{equation*}

Thus, \kktil\ is a maximal function in $K_\theta$ for any $\lambda\in\DD$.
\end{ex}

\subsection{Toeplitz kernels and multipliers acting on model spaces}
\label{subsec: Hayashi's representation of Toeplitz kernels}

Toeplitz kernels can also be described as resulting from multipliers acting on model spaces.
Multipliers between two Toeplitz kernels play an important role in understanding those spaces, especially if one of the Toeplitz kernels is a simple or a well studied space to which the other Toeplitz kernel is reduced, in some sense, by a multiplication operator.
The case where one of the Toeplitz kernels is a model space and we have a representation of the form
\begin{equation}
\label{eq:Hayashi_representation}
\ker T_g = w K_\theta , \quad \theta \text{ inner},
\end{equation}
is particularly important.
Indeed Hayashi showed (\cite[Thm. 3]{Ha86}, see also \cite{Sa94}) that such a representation is possible for every nontrivial Toeplitz kernel and that one may choose the multiplier $w$ and the function $\theta$ in \eqref{eq:Hayashi_representation} such that $\theta (0) = 0$ and $w$ multiplies the model space $K_\theta$ isometrically onto $\ker T_g$.

Hayashi's characterisation of Toeplitz kernels can be stated as follows.
Let $u\in\Hp$ be an outer function with $u(0)>0$, let $F$ be the Herglotz integral of $|u|^2$, let $b= \frac{F-1}{F+1}$, $a= \frac{2u}{F+1}$
and write, for $\alpha$ inner, $u_\alpha = \frac{a}{1-\alpha b}$.
With these notations we have:

\begin{thm}[{\cite[Theorem 5]{Ha90}}]
\label{thm:Hayashi_rep}
The nontrivial kernels of Toeplitz operators are the subspaces of \Hp\ of the form
\begin{equation}
\label{eq:Hayashi_rep_v2}
M = u_\alpha K_{z \alpha}
\end{equation}
where $\alpha$ is inner, $u\in\Hp$ is outer with $u(0)>0$, $u^2$ is an exposed point of the unit ball of $H^1$, and $u_\alpha$ multiplies $K_{z \alpha}$ isometrically onto $M$.
\end{thm}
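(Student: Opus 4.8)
The plan is to prove Theorem~\ref{thm:Hayashi_rep} by connecting Hayashi's representation to the two characterisations already recorded in the excerpt, namely the maximal-function description (ii) and the symbol formula \eqref{eq:f=IO.max.gives.unimodular.symbol}. The theorem has two directions: every nontrivial Toeplitz kernel has the form $u_\alpha K_{z\alpha}$ with the stated properties, and conversely every such subspace is a Toeplitz kernel. First I would establish the converse (the easier inclusion): given $\alpha$ inner, $u$ outer with $u(0)>0$ and $u^2$ exposed in the unit ball of $H^1$, I would show directly that $u_\alpha K_{z\alpha}$ is the kernel of a Toeplitz operator. The natural candidate symbol comes from the outer function $a$ and the factorisation identity $a = u_\alpha(1-\alpha b)$, which after passing to boundary values should yield a unimodular symbol $g$ with $\ker T_g = u_\alpha K_{z\alpha}$; here I would invoke characterisation (ii) applied to a suitable maximal function.

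For the forward direction I would start from an arbitrary nontrivial $\ker T_g$ and produce a maximal function $f^M$ as guaranteed in the excerpt, with inner-outer factorisation $f^M = IO$. By \eqref{eq:f=IO.max.gives.unimodular.symbol} I may assume $g = \ol{zI}\,\ol O/O$ is unimodular. The key step is then to manufacture the data $u$, $\alpha$ out of $I$ and $O$: the inner part $\alpha$ should be read off from $I$ (with the factor of $z$ in $z\alpha$ accounting for the $\ol z$ in the symbol, matching the $\theta(0)=0$ normalisation mentioned after \eqref{eq:Hayashi_representation}), and the outer function $u$ should be chosen so that $|u|^2$ reproduces $O$ through the Herglotz machinery. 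Concretely, I would set up $u$ from $O$, form $F$ as the Herglotz integral of $|u|^2$, and verify the algebraic relations $b = (F-1)/(F+1)$, $a = 2u/(F+1)$, $u_\alpha = a/(1-\alpha b)$, checking that $u_\alpha$ equals the multiplier $w$ in \eqref{eq:Hayashi_representation} up to the identification $\theta = z\alpha$.

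The exposed-point condition on $u^2$ is what encodes \emph{isometric} multiplication, and verifying it is where I expect the main obstacle to lie. An exposed point of the unit ball of $H^1$ is characterised by the triviality of an associated Toeplitz kernel (there is no nonzero $h\in H^1$ with $h/|h|$ having the same argument on \TT\ other than positive multiples), and translating this into the statement that $u_\alpha$ maps $K_{z\alpha}$ \emph{onto} all of $\ker T_g$ — not merely into it — requires controlling that nothing in the kernel is missed. The isometry itself should follow from a norm computation: for $\phi\in K_{z\alpha}$ one computes $\|u_\alpha\phi\|_2^2$ and uses the defining relations for $a$, $b$, $F$ together with the fact that $|a|^2 + |b|^2\cdot(\text{something}) $ collapses via the identity $F+\ol F = 2|u|^2$ on \TT. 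The delicate point is that all of this must be done at the level of boundary functions while keeping $u_\alpha\in\Hp$, so I would pay close attention to integrability and to the role of $u(0)>0$ in pinning down the normalisation that makes the correspondence $g \leftrightarrow (u,\alpha)$ well defined.
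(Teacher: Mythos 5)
First, a point of comparison: the paper does not prove this statement. Theorem \ref{thm:Hayashi_rep} is quoted from Hayashi (\cite[Theorem 5]{Ha90}) and used as a known input, so there is no in-paper argument to measure yours against. Judged on its own terms, your proposal correctly identifies the architecture of Hayashi's argument --- the Herglotz integral $F$ of $|u|^2$, the functions $b=(F-1)/(F+1)$ and $a=2u/(F+1)$, the identity $|a|^2+|b|^2=1$ a.e.\ on \TT\ (your ``something'' is just $1$, since $\Re F=|u|^2$ a.e.), and the role of the exposed-point condition in forcing surjectivity of the multiplication. But it remains a plan rather than a proof: each genuinely hard step (the isometry computation, the fact that $u_\alpha K_{z\alpha}$ exhausts the kernel, and the extraction of the data $(u,\alpha)$ from a given kernel) is flagged as an expected obstacle rather than carried out, so nothing is actually established.

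There is also a concrete misdirection in your forward direction. You propose to take an \emph{arbitrary} maximal function $f^M=IO$ in $\ker T_g$ and manufacture $u$ from $O$ and $\alpha$ from $I$. This cannot work in general: by Corollary \ref{cor:mult.from.model.space.onto.kernel.are.square-rigid}, any $w$ with $\ker T_g=wK_\theta$ must be square-rigid, whereas the outer factor of an arbitrary maximal function need not be (the paper makes exactly this point in the discussion following Proposition \ref{prop:not.zero.fact_max.func}); and even among admissible multipliers only one, up to a unimodular constant, is isometric. The correct choice, which your plan never pins down, is the specific extremal element of $\ker T_g$ orthogonal to $\{f\in\ker T_g: f(0)=0\}$, suitably normalised --- this is precisely how the paper computes Hayashi's representation in Example \ref{ex:Finite_dimensional_Toeplitz_kernels}, following \cite{Sa94}. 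Without identifying that element, showing that the associated outer function $u$ satisfies the rigidity condition of Proposition \ref{prop:square.rigid} together with the normalisation making $u^2$ exposed, and then verifying the isometry, the forward direction does not close.
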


A function $f$ in the unit ball of $H^1$ is exposed if and only if it is rigid and $||f||_1 = 1$.
Rigid functions can be characterised in terms of Toeplitz operators (\cite[Proposition 1]{Sa94}) as follows.

\begin{prop}
\label{prop:square.rigid}
If $O\in\Hp$ is outer, then 
\[
O^2 \text{ is rigid in } H^1 \iff \ker T_\frac{\ol O}{O} = \{ 0 \} \iff \ker T_{\ol z \frac{\ol O}{O}} = \spn \{ O \}.
\]
\end{prop}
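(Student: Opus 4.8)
The plan is to organise the three statements into a cycle, using the equivalence of the two Toeplitz-kernel conditions as a bridge to rigidity. Throughout, $g:=\frac{\ol O}{O}$ is unimodular on \TT, since $O$ is outer.

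First I would prove $\ker T_{\frac{\ol O}{O}}=\zero\iff\ker T_{\ol z\frac{\ol O}{O}}=\spn\{O\}$ by realising the second kernel as a one-dimensional enlargement of the first. The point is that $\phi\in\ker T_{\frac{\ol O}{O}}$ means $\frac{\ol O}{O}\phi\in\Hm$, whereas $\phi\in\ker T_{\ol z\frac{\ol O}{O}}$ means $\frac{\ol O}{O}\phi\in z\,\Hm=\ol\Hp$ (nonpositive Fourier spectrum); the two conditions differ only in the constant Fourier coefficient $c$ of $\frac{\ol O}{O}\phi$. By Theorem \ref{thm: maximal function characterization}, the identity $\ol z\frac{\ol O}{O}\cdot O=\ol z\ol O$ shows that $O$ is a maximal function in $\ker T_{\ol z\frac{\ol O}{O}}$; moreover $\frac{\ol O}{O}\,O=\ol O$ has constant coefficient $\ol{O(0)}\neq0$ because $O$ is outer. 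Hence $\phi\mapsto\phi-\frac{c}{\,\ol{O(0)}\,}O$ maps $\ker T_{\ol z\frac{\ol O}{O}}$ onto $\ker T_{\frac{\ol O}{O}}$ with kernel $\spn\{O\}$, so $\ker T_{\ol z\frac{\ol O}{O}}=\spn\{O\}\oplus\ker T_{\frac{\ol O}{O}}$ and the equivalence follows at once.

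Next I would show that if $O^2$ is \emph{not} rigid then $\ker T_{\ol z\frac{\ol O}{O}}$ strictly contains $\spn\{O\}$. Choose $F\in H^1\bs\zero$ with $\arg F=\arg O^2$ a.e.\ on \TT\ but $F$ not a positive multiple of $O^2$; then $\psi:=F/O^2\in\NN$ is nonnegative a.e.\ and nonconstant. As $\psi\in\NN$ we have $\log\psi\in L^1$, so there is an outer $h$ with $|h|^2=\psi$, and $\psi=h\ol h$ on \TT. Put $\phi:=Oh$. The bookkeeping $\int_\TT|Oh|^2=\int_\TT|O|^2\psi=\|F\|_1<\infty$ places $\phi$ in $\NN\cap L^2=\Hp$; likewise $\zeta:=O\ol h=F/\phi$ lies in \NN\ with $|\zeta|=|\phi|$, hence $\zeta\in\Hp$, and $\ol\zeta=\ol O\,h=\frac{\ol O}{O}\phi$. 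Therefore $\ol z\frac{\ol O}{O}\phi=\ol z\,\ol\zeta\in\Hm$, i.e.\ $\phi\in\ker T_{\ol z\frac{\ol O}{O}}$, while $\phi=Oh\notin\spn\{O\}$ since $h$ is nonconstant. With the first step this gives: $O^2$ not rigid $\implies\ker T_{\frac{\ol O}{O}}\neq\zero$. I expect this to be the main obstacle: one must manufacture genuine \Hp-functions out of data that is a priori only of Smirnov class, and the entire argument rests on the modulus identity $|O|^2\psi=|F|$ tying every factor back to $\|F\|_1$.

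For the converse I would convert a nonzero kernel element into a rigidity competitor. If $0\neq\phi\in\ker T_{\frac{\ol O}{O}}$ then $\frac{\ol O}{O}\phi=\ol\eta$ for some $\eta\in\Hp$ with $\eta(0)=0$ (as $\Hm=\ol{z\Hp}$), so $\ol O\phi=O\ol\eta$, and $\eta\neq0$ because $O$ is outer. Multiplying by $\eta$ yields $\ol O\,\phi\eta=O|\eta|^2$, whence $\phi\eta=\frac{O^2}{|O|^2}|\eta|^2=O^2\cdot\frac{|\eta|^2}{|O|^2}$. Thus $F:=\phi\eta\in H^1$ has $\arg F=\arg O^2$ a.e., yet $F(0)=\phi(0)\eta(0)=0$, so $F$ is not a positive multiple of $O^2$ (which takes the value $O(0)^2\neq0$ at the origin). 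Hence $O^2$ is not rigid, giving the reverse implication $\ker T_{\frac{\ol O}{O}}\neq\zero\implies O^2$ not rigid. Assembling the three implications closes the cycle and proves that all three conditions are equivalent.
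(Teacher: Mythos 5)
Your proof is correct. Note that the paper does not actually prove this proposition --- it quotes it from Sarason (\cite[Proposition 1]{Sa94}) --- so there is no in-paper argument to compare against; what you have written is a self-contained proof along the classical Sarason lines. Your three steps all check out: the decomposition $\ker T_{\ol z \frac{\ol O}{O}} = \spn\{O\} \oplus \ker T_{\frac{\ol O}{O}}$ via the $0$th Fourier coefficient of $\frac{\ol O}{O}\phi$ (using $\ol{O(0)}\neq 0$ because $O$ is outer) is sound and gives the second equivalence; the construction $\phi = Oh$ with $|h|^2 = F/O^2$ correctly uses $\log\psi\in L^1$ for a nonzero Smirnov function and the Smirnov maximum principle $\NN\cap L^2 = \Hp$ twice (once for $\phi$, once for $\zeta = F/\phi$, the latter legitimate because $\phi=Oh$ is outer); and the converse direction producing the competitor $F=\phi\eta = O^2\,|\eta|^2/|O|^2$ with $F(0)=0\neq O(0)^2$ is exactly the standard correspondence between kernel elements and functions sharing the argument of $O^2$. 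The only cosmetic remark is that your appeal to Theorem \ref{thm: maximal function characterization} in the first step is not needed --- you only use the membership $O\in\ker T_{\ol z\frac{\ol O}{O}}$ and the value $\ol{O(0)}$ of the constant coefficient, both of which you verify directly.
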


When $f \in \Hp$ is such that $f^2$ is rigid in $H^1$, we say that $f$ is {\em square-rigid} (\cite{CP16_finite}).
In that case, it is easy to see that $f$ is necessarily outer (\cite{Sa94}).

As a consequence of Proposition \ref{prop:square.rigid}, we have:

\begin{cor}
If $f^{\pm 1} \in \Hp$ then $f$ is square-rigid.
\end{cor}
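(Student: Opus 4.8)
The plan is to deduce this from Proposition~\ref{prop:square.rigid} applied with $O=f$. That proposition requires $f$ to be outer and then reduces square-rigidity of $f$ to the single statement $\ker T_{\ol f/f}=\zero$, so there are two tasks: first check outerness, then compute the kernel.

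Outerness is the easy part. Since $f^{-1}\in\Hp$, the function $f$ is analytic and zero-free in \DD; more precisely, a function in the Smirnov class \NN\ whose reciprocal also lies in \NN\ (and here $f,f^{-1}\in\Hp\subset\NN$) has trivial inner factor, so $f$ is outer, and in particular $f(0)\neq0$.

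The main step is to show $\ker T_{\ol f/f}=\zero$. Take $\phi_+\in\ker T_{\ol f/f}$; by definition this means $P^+\!\left(\tfrac{\ol f}{f}\phi_+\right)=0$, equivalently $\tfrac{\ol f}{f}\phi_+=\phi_-$ for some $\phi_-\in\Hm$, i.e.\ $\ol f\,\phi_+=f\,\phi_-$ on \TT. Dividing by $|f|^2=f\ol f$ produces a single function $\psi:=\dfrac{\phi_+}{f}=\dfrac{\phi_-}{\ol f}$. From the first expression, $\psi=\phi_+\cdot f^{-1}$ is a product of two \Hp\ functions, hence $\psi\in H^1$. From the second, $\ol{\phi_-}\in\ol{\Hm}=z\Hp\subset\Hp$, so $\ol{\phi_-}\,f^{-1}\in H^1$ and therefore $\psi=\ol{\,\ol{\phi_-}\,f^{-1}\,}\in\ol{H^1}$. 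Since the Fourier spectrum of $H^1$ lies in $\{n\ge0\}$ and that of $\ol{H^1}$ in $\{n\le0\}$, any element of $H^1\cap\ol{H^1}$ is constant, so $\psi\equiv c$. Then $\phi_-=c\,\ol f$, and the membership $\phi_-\in\Hm$ forces the constant Fourier coefficient $c\,\ol{f(0)}$ to vanish; as $f(0)\neq0$ this gives $c=0$ and hence $\phi_+=cf=0$. Thus $\ker T_{\ol f/f}=\zero$, and Proposition~\ref{prop:square.rigid} yields that $f^2$ is rigid, i.e.\ $f$ is square-rigid.

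The crux, and the only place where the two-sided hypothesis $f^{\pm1}\in\Hp$ is genuinely used, is the upgrade of the quotient $\phi_+/f$ from a merely measurable function to an element of $H^1$ (and symmetrically for $\phi_-/\ol f$): it is precisely $f^{-1}\in\Hp$ that makes $\psi$ simultaneously analytic and co-analytic. The auxiliary facts, namely $H^1\cap\ol{H^1}=\CC$ via Fourier coefficients and the outerness of $f$, are standard, so the real point to spot is the division by $|f|^2$ together with the distinct roles played by the two memberships $f\in\Hp$ and $f^{-1}\in\Hp$.
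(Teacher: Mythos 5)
Your proof is correct and follows the route the paper intends: the corollary is stated as a direct consequence of Proposition~\ref{prop:square.rigid}, and your verification that $\ker T_{\ol f/f}=\zero$ via $\psi=\phi_+/f=\phi_-/\ol f\in H^1\cap\ol{H^1}=\CC$ is the same Liouville-type argument the paper itself uses later (in the proof of Proposition~\ref{prop:multipliers.are.square-rigid}, where $\NN\cap L^2=\Hp$ plays the role of your $H^1$ spaces). Nothing is missing; the outerness of $f$ and the vanishing of the constant via $f(0)\neq 0$ are both handled properly.
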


Hayashi's representation \eqref{eq:Hayashi_rep_v2} is unique up to a unimodular constant.
While this representation is important to understand the structure of a Toeplitz kernel, by describing it in terms of a model space and a multiplier that preserves the norm, it may be very difficult to obtain explicitly.
In general, we look for a description of the form \eqref{eq:Hayashi_representation}, which we call a {\em model space representation} of $\ker T_g$. 
The multiplier in \eqref{eq:Hayashi_representation} is not necessarily isometric, but \eqref{eq:Hayashi_representation} provides a clear description of the Toeplitz kernel, especially if the model space is well understood.
We say that \eqref{eq:Hayashi_representation} is an {\em isometric model space representation} of $\ker T_g$ if $w$ multiplies $K_\theta$ isometrically onto $\ker T_g$.

\vspace{8pt}

In this paper we study these various characterisations of a Toeplitz kernel and their relations, highlighting the crucial role played in this study by appropriate symbol factorisations and by conjugations.

The paper is organised as follows.

In Section \ref{sec:Maximal_functions_and_multipliers_between_Toeplitz_kernels}, which is of a preliminary nature, we present several known and new results highlighting the close connection between multipliers from one Toeplitz kernel onto another, maximal functions, and associated factorisations for the symbol that naturally arise in this context.

In Section \ref{sec:Toeplitz kernels and symbol factorisation}, we review and extend the characterisation of several properties of Toeplitz operators and their kernels in terms of certain factorisations of their symbols.
By introducing a generalisation of the classical notion of $L^2$-factorisation, we obtain a model space representation for the kernels of Toeplitz operators with piecewise continuous symbols, from which Hayashi's representation \eqref{eq:Hayashi_rep_v2} can be determined.
We also introduce, from the characterisation of Toeplitz kernels in terms of maximal functions, a new notion of maximal function factorisation for the symbols and we establish conditions for the outer factor of a maximal function to be a multiplier from a model space onto the kernel.
We illustrate these results by obtaining two model space representations of $\ker T_{\ol E z}$, where $E(z) = \exp (\frac{z+1}{z-1} )$, one of which is isometric.

In Section \ref{sec: Maximal functions and conjugations} we study, for the first time to the authors' knowledge, the relations between maximal functions and the natural conjugation in $\ker T_g$, denoted \Cg\ ($|g|=1$), using it to establish connections with generators of model spaces and invariant sets for the conjugation.

In Section \ref{sec:Applications} we apply the results of the previous sections to model spaces, a particularly well studied class of Toeplitz kernels, as an illustration.
In particular, by using maximal function factorisations, we obtain a large class of multipliers between two model spaces, thus recovering and extending results from \cite{Cro94}.


\section{Maximal functions and multipliers between Toeplitz kernels}
\label{sec:Maximal_functions_and_multipliers_between_Toeplitz_kernels}

In what follows, $g$ and $h$ always denote functions in $\Linf\bs\zero$ such that $\ker T_g, \ker T_h \neq\zero$.

There is a close connection between maximal functions in Toeplitz kernels and multipliers between Toeplitz kernels.
We start by noting that, while multipliers between model spaces must be in \Hp\ (\cite[Corollary 4]{Cro94}), multipliers between general Toeplitz kernels need not be in \Hp; they must however belong to the Smirnov class \NN\ (\cite{Ni02_vol1}).
So we have that
\begin{align*}
w \ker T_h \subset \ker T_g & \implies w \in \NN \\
w \ker T_h = \ker T_g & \implies w^{\pm 1} \in \NN
\end{align*}
(\cite[Remark 2.4]{CP18_multipliers}).
These multipliers are characterised as follows.

\begin{thm}[{\cite[Thm. 2.5]{CP18_multipliers}}]
\label{thm:multiplicadores_into_kernels}
Let $w\in\NN$.
Then the following are equivalent:
\begin{enumerate}[(i)]
\item $w \ker T_h \subset \ker T_g$;

\item $w \ker T_h \subset L^2$ and $\ds w g/h \in \ol\NN$;

\item $w \ker T_h \subset L^2$ and for some (and hence every) maximal function $\ds f^M_h$ in $\ker T_h$ we have $\ds w f^M_h \in \ker T_g$.
\end{enumerate}

\end{thm}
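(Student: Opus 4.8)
The plan is to establish the cyclic chain of implications (i)$\Rightarrow$(ii)$\Rightarrow$(iii, for \emph{every} maximal function)$\Rightarrow$(iii, for \emph{some} maximal function)$\Rightarrow$(i). This yields the three equivalences and simultaneously settles the ``for some (and hence every)'' clause: the third arrow is immediate, since $\ker T_h\neq\zero$ guarantees that maximal functions exist, while the fourth arrow will be shown to follow from the existence of a \emph{single} maximal function with the stated property.

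The algebraic engine throughout is Theorem \ref{thm: maximal function characterization}: a maximal function $f^M_h$ in $\ker T_h$ satisfies $h f^M_h = \ol z\,\ol{O_h}$ for some outer $O_h\in\Hp$. Combined with the defining relation $\phi_+\in\ker T_g \iff g\phi_+\in\Hm$ and the identity $\Hm=\ol z\,\ol\Hp$, this reduces every implication to manipulating the symbol $wg/h$ against the factorisation $hf^M_h=\ol z\,\ol{O_h}$. The technical glue is Smirnov-class arithmetic: \NN\ is closed under products and under division by outer functions, and $\NN\cap L^2=\Hp$. This last fact is used repeatedly to upgrade a Smirnov-class function to an \Hp-function once it is known to lie in $L^2$, and this is exactly where the hypothesis $w\ker T_h\subset L^2$ enters in (ii) and (iii).

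For (i)$\Rightarrow$(ii) the $L^2$ clause is automatic, as $\ker T_g\subset\Hp$. Taking any maximal $f^M_h$, hypothesis (i) gives $wf^M_h\in\ker T_g$, so $gwf^M_h=\ol z\,\ol v$ with $v\in\Hp$; dividing by $hf^M_h=\ol z\,\ol{O_h}$ yields $wg/h=\ol{v/O_h}\in\ol\NN$, since $v/O_h\in\NN$. For (ii)$\Rightarrow$(iii), writing $wg/h=\ol s$ with $s\in\NN$ gives $gwf^M_h=(wg/h)(hf^M_h)=\ol z\,\ol{sO_h}$ for an arbitrary maximal $f^M_h$. Here $wf^M_h\in\NN\cap L^2=\Hp$, so $gwf^M_h\in L^2$, which forces $sO_h\in\NN\cap L^2=\Hp$ and hence $gwf^M_h\in\Hm$, i.e.\ $wf^M_h\in\ker T_g$, for \emph{every} maximal function.

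The main obstacle is the closing arrow (iii, some)$\Rightarrow$(i), where one must pass from a single function $f^M_h$ to the whole kernel. I would exploit that the maximal-function relation lets us solve $h=\ol z\,\ol{O_h}/f^M_h$ and thereby describe membership intrinsically: for $\phi_+\in\ker T_h$ one gets $\ol{O_h}\phi_+/f^M_h\in\ol\Hp$, say $=\ol u$ with $u\in\Hp$, so $\phi_+=f^M_h\,\ol{u/O_h}$. Writing $F:=wf^M_h\in\ker T_g$ and $gF=\ol z\,\ol P$ with $P\in\Hp$, a direct computation gives $gw\phi_+=gF\,\ol{u/O_h}=\ol z\,\ol{Pu/O_h}$. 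Invoking $w\ker T_h\subset L^2$ once more, $gw\phi_+\in L^2$ forces $Pu/O_h\in\NN\cap L^2=\Hp$, whence $gw\phi_+\in\Hm$ and $w\phi_+\in\ker T_g$. The delicate points are to note that $f^M_h$ is nonzero a.e.\ on \TT\ (so the divisions are legitimate) and to verify that each quotient formed lies in \NN\ before the $L^2$-upgrade is applied; once these are in place, the four arrows close the loop.
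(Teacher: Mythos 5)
The paper itself gives no proof of this theorem --- it is imported verbatim from \cite[Thm.~2.5]{CP18_multipliers} --- so there is nothing internal to compare against; judged on its own, your argument is correct and complete. The cyclic scheme (i)$\Rightarrow$(ii)$\Rightarrow$(iii, every)$\Rightarrow$(iii, some)$\Rightarrow$(i) is the right way to absorb the ``for some (and hence every)'' clause, and each arrow checks out: the divisions by $f^M_h$ and $h$ are legitimate because $hf^M_h=\ol z\,\ol{O_h}$ with $O_h$ outer forces both to be nonzero a.e.\ on \TT, the quotients you form are manifestly in \NN\ or $\ol\NN$ (quotients of $\Hp$-functions by outer functions), and the upgrade $\NN\cap L^2=\Hp$ is applied exactly where the hypothesis $w\ker T_h\subset L^2$ is available. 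The one step you leave tacit in the closing arrow is that $w\phi_+\in\Hp$ (not merely that $gw\phi_+\in\Hm$), but this is the same $\NN\cap L^2=\Hp$ upgrade you have already invoked twice, so it is a cosmetic omission rather than a gap. The key structural insight --- that the single relation $hf^M_h=\ol z\,\ol{O_h}$ lets you write an arbitrary $\phi_+\in\ker T_h$ as $f^M_h\,\ol{u/O_h}$ and thereby propagate the conclusion from one maximal function to the whole kernel --- is precisely what makes maximal functions usable as test functions, which is the point the surrounding text of Section~\ref{sec:Maximal_functions_and_multipliers_between_Toeplitz_kernels} emphasises.
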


For $w=1$ we obtain from Theorem \ref{thm:multiplicadores_into_kernels} necessary and sufficient conditions for a Toeplitz kernel to be included in another Toeplitz kernel.

\begin{cor}[{\cite[Prop. 2.16]{CP18_multipliers}}]
\label{cor:kernel.subset.of.another.kernel}
The following conditions are equivalent:
\begin{enumerate}[(i)]
\item $\ker T_h \subset \ker T_g$;

\item $g/h \in \ol\NN$;

\item There exists a maximal function $f^M_h$ in $\ker T_h$ such that $f^M_h \in \ker T_g$.
\end{enumerate}

\end{cor}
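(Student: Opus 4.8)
The plan is to obtain this corollary as the immediate specialisation of Theorem \ref{thm:multiplicadores_into_kernels} to the trivial multiplier $w = 1$. First I would observe that the constant function $1$ lies in the Smirnov class \NN, so the theorem is applicable with this choice of $w$. Substituting $w = 1$ into parts (i), (ii), (iii) of the theorem, the spaces $w\ker T_h$ collapse to $\ker T_h$, the quotient $wg/h$ becomes $g/h$, and the requirement $w f^M_h \in \ker T_g$ becomes $f^M_h \in \ker T_g$; these are exactly conditions (i), (ii), (iii) of the present statement, once the side condition is dealt with.

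The only point requiring a word of justification is the hypothesis $w\ker T_h \subset L^2$ appearing in both (ii) and (iii) of the theorem. With $w = 1$ this reads $\ker T_h \subset L^2$, which holds automatically, since $\ker T_h$ is by definition a (closed) subspace of \Hp\ and $\Hp \subset L^2$. Hence this side condition is vacuous here and may be dropped, so that (ii) of the theorem reduces precisely to $g/h \in \ol\NN$ and (iii) reduces precisely to the existence of a maximal function $f^M_h$ in $\ker T_h$ with $f^M_h \in \ker T_g$ (the ``for some and hence every'' of the theorem yielding in particular the ``there exists'' formulation of (iii)).

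Putting these observations together, the chain of equivalences in Theorem \ref{thm:multiplicadores_into_kernels} becomes exactly the chain asserted in the corollary, and the proof is complete. I do not anticipate any genuine obstacle: the entire content is the verification that $1 \in \NN$ and that the auxiliary $L^2$-inclusion is automatic, after which the result is a direct reading-off from the already-established theorem. The only mild subtlety worth flagging explicitly is that the equivalence is genuinely between the \emph{existence} of one suitable maximal function and the inclusion of the full kernels, which is legitimate precisely because the theorem guarantees that the property holds for one maximal function if and only if it holds for every maximal function in $\ker T_h$.
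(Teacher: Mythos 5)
Your proposal is correct and matches the paper's own route exactly: the corollary is presented there as the immediate specialisation of Theorem \ref{thm:multiplicadores_into_kernels} to $w=1$, with the side conditions becoming vacuous just as you observe.
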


Since every Toeplitz kernel can be represented in the form \eqref{eq:Hayashi_representation}, it is natural to ask when is a product of the form $w K_\theta$ a Toeplitz kernel and, more generally, when is $w \ker T_h$ a Toeplitz kernel.
We have the following.

\begin{thm}[{\cite{CP18_multipliers}}]
\label{thm:multiplicadores_onto_kernels}
Let $w^{\pm 1} \in \NN$.
Then the following are equivalent:
\begin{enumerate}[(i)]
\item $w \ker T_h = \ker T_g$;

\item $w \ker T_h \subset L^2$, $w^{-1} \ker T_g \subset L^2$ and $\frac{g}{h} = \frac{\ol w}{w} \frac{\ol O_1}{\ol O_2}$ with $O_1,O_2 \in \Hp$, outer;

\item $w \ker T_h \subset L^2$, $w^{-1} \ker T_g \subset L^2$ and, for some (and hence every) maximal function $f^M_h$ in $\ker T_h$, we have that $w f^M_h$ is a maximal function in $\ker T_g$.
\end{enumerate}

Moreover,
\begin{equation}
\label{eq:simbolo_vindo_de_multiplicador}
\ker T_g = w \ker T_h \implies \ker T_g = \ker T_{h\frac{\ol w}{w}}.
\end{equation}

\end{thm}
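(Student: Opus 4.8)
The plan is to derive everything from Theorem~\ref{thm:multiplicadores_into_kernels}, using that the equality $w\ker T_h=\ker T_g$ is exactly the conjunction of the two inclusions $w\ker T_h\subset\ker T_g$ and $\ker T_g\subset w\ker T_h$, the latter being $w^{-1}\ker T_g\subset\ker T_h$. Two preliminary facts would be recorded first. Since $w^{\pm1}\in\NN$, the function $w$ is necessarily outer: an element of $\NN$ whose reciprocal is again in $\NN$ can carry no inner factor, as the conjugate of a nonconstant inner function is not in $\NN$. Second, I would invoke repeatedly the classical Smirnov theorem, that a Smirnov-class function lying in $L^2$ already belongs to \Hp, in order to upgrade the $L^2$ hypotheses to genuine membership in \Hp\ whenever maximal functions or their outer factors are produced.

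For (i)$\iff$(ii) I would apply Theorem~\ref{thm:multiplicadores_into_kernels}(ii) to $w$ (with symbols $h,g$) and to $w^{-1}$ (with symbols $g,h$). This shows that (i) holds iff $w\ker T_h\subset L^2$, $w^{-1}\ker T_g\subset L^2$, and both $wg/h\in\ol\NN$ and $(wg/h)^{-1}=w^{-1}h/g\in\ol\NN$. Writing $wg/h=\ol\eta$, the two reciprocal conditions say precisely that $\eta^{\pm1}\in\NN$, i.e. that $\eta$ is outer. As $w$ is outer, $\eta/w$ is outer in $\NN$, hence can be written as a ratio $O_1/O_2$ of two outer functions in \Hp; then $wg/h=\ol w\,\ol O_1/\ol O_2$ rearranges to the symbol relation $g/h=\frac{\ol w}{w}\frac{\ol O_1}{\ol O_2}$ of (ii), and the converse is the same computation read backwards.

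For (ii)$\Rightarrow$(iii) I would take an arbitrary maximal function $f^M_h$ in $\ker T_h$ and use Theorem~\ref{thm: maximal function characterization} to write $h f^M_h=\ol z\,\ol{O_h}$ with $O_h$ outer. Multiplying by $(g/h)w=\overline{wO_1/O_2}$ gives $g\,(wf^M_h)=\ol z\,\overline{wO_1O_h/O_2}$; the cofactor $wO_1O_h/O_2$ is a product and quotient of outer functions, hence outer, and the $L^2$ hypothesis on $w\ker T_h$ together with Smirnov's theorem places both $wf^M_h$ and this cofactor in \Hp, so Theorem~\ref{thm: maximal function characterization} identifies $wf^M_h$ as a maximal function in $\ker T_g$ for every such $f^M_h$. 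For (iii)$\Rightarrow$(i) I would run Theorem~\ref{thm:multiplicadores_into_kernels}(iii) in both directions: $wf^M_h\in\ker T_g$ yields $w\ker T_h\subset\ker T_g$, while regarding $f^M_g:=wf^M_h$ as a maximal function in $\ker T_g$ with $w^{-1}f^M_g=f^M_h\in\ker T_h$ yields, via the same theorem applied to $w^{-1}$, the reverse inclusion $w^{-1}\ker T_g\subset\ker T_h$; the two inclusions give (i), and the ``some/hence every'' clause is then consistent since (i) loops back to (ii). The step needing the most care is exactly this symbol bookkeeping: one must be sure the factors produced are genuinely outer and in \Hp\ rather than merely in \NN, which is where the outerness of $w$ and Smirnov's theorem are indispensable.

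Finally, for the ``moreover'' implication I would avoid re-proving a multiplier equality and instead show that $g':=h\frac{\ol w}{w}$ has the same maximal function as $g$. Note $\ol w/w$ is unimodular, so $g'\in\Linf$. Taking the maximal function $wf^M_h$ of $\ker T_g$ produced above, the computation $g'(wf^M_h)=\ol w\,(hf^M_h)=\ol w\,\ol z\,\ol{O_h}=\ol z\,\overline{wO_h}$, with $wO_h$ outer and in \Hp\ by Smirnov's theorem, shows via Theorem~\ref{thm: maximal function characterization} that $wf^M_h$ is also a maximal function in $\ker T_{g'}$. Since a maximal function determines its Toeplitz kernel, $\ker T_g=\KKmin(wf^M_h)=\ker T_{g'}$, which is the asserted identity $\ker T_g=\ker T_{h\frac{\ol w}{w}}$.
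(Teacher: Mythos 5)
The paper does not prove this theorem; it imports it from \cite{CP18_multipliers}, so there is no internal proof to compare against. Your argument is correct and is the natural derivation from the ingredients the paper does quote: splitting the equality into the two inclusions $w\ker T_h\subset\ker T_g$ and $w^{-1}\ker T_g\subset\ker T_h$, applying Theorem~\ref{thm:multiplicadores_into_kernels} to $w$ and to $w^{-1}$, and using Theorem~\ref{thm: maximal function characterization} for the maximal-function bookkeeping; the reduction of the ``moreover'' claim to the fact that $wf^M_h$ is simultaneously a maximal function for $\ker T_g$ and for $\ker T_{h\ol w/w}$ is a clean way to get \eqref{eq:simbolo_vindo_de_multiplicador}. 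Two small points you should make explicit: in the ``moreover'' step, Smirnov's theorem needs the prior observation that $wO_h\in L^2$, which follows from $|wO_h|=|h|\,|wf^M_h|\le \|h\|_\infty\,|wf^M_h|$ (or from $g'(wf^M_h)\in L^2$ since $|g'|=|h|$); and in (i)$\Rightarrow$(ii) you should justify that an outer Smirnov quotient such as $\eta/w$ can indeed be written as $O_1/O_2$ with $O_1,O_2\in\Hinf$ outer (take outer functions with moduli $e^{-u^-}$ and $e^{-u^+}$ where $u=\log|\eta/w|\in L^1(\TT)$). Neither affects the validity of the proof.
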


For $w=1$, we obtain from Theorem \ref{thm:multiplicadores_onto_kernels} necessary and  sufficient conditions for two Toeplitz operators with different symbols to have the same kernel.

\begin{cor}[{\cite[Corollary 2.19]{CP18_multipliers}}]
\label{cor:kernels_iguais}
$\ker T_g = \ker T_h \iff \frac{g}{h} = \frac{\ol O_1}{\ol O_2}$ with $O_1,O_2\in\Hp$ outer.

In this case, if $h\in\GG\Linf$ then $g = h \ol{O_+}$, with $O_+\in\Hinf$, outer; in particular, if $|g| = |h| = 1$ then $g=\lambda h$ with $\lambda\in\CC$, $|\lambda| =1$.

\end{cor}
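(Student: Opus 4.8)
The plan is to read off the main equivalence as the case $w = 1$ of Theorem \ref{thm:multiplicadores_onto_kernels}, and then to sharpen the resulting symbol identity under the two successive extra hypotheses.

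For the equivalence, I would set $w = 1$ in Theorem \ref{thm:multiplicadores_onto_kernels}. Since $w^{\pm 1} = 1 \in \NN$, the theorem applies and its conditions (i) and (ii) are equivalent. The side conditions $w\ker T_h \subset L^2$ and $w^{-1}\ker T_g \subset L^2$ appearing in (ii) hold automatically, because $\ker T_h$ and $\ker T_g$ are subspaces of $\Hp \subset L^2$. Hence (i), which now reads $\ker T_g = \ker T_h$, is equivalent to the single relation $g/h = \ol{O_1}/\ol{O_2}$ with $O_1, O_2 \in \Hp$ outer, which is exactly the asserted equivalence.

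For the second statement, assume in addition $h \in \GG\Linf$. Then $1/h \in \Linf$, so $g/h \in \Linf$ and therefore $O_1/O_2 = \ol{g/h} \in \Linf$ as well. Since $O_2$ is outer we have $1/O_2 \in \NN$, so the quotient $O_1/O_2$ lies in \NN\ and, being a quotient of outer functions, is itself outer. A bounded Smirnov function belongs to \Hinf\ (that is, $\NN \cap \Linf = \Hinf$), so $O_+ := O_1/O_2$ is a bounded outer function and $g = h\,\ol{O_+}$, as claimed.

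For the last statement, if $|g| = |h| = 1$ then $h$ is unimodular, hence $h \in \GG\Linf$, and the previous paragraph gives $g = h\,\ol{O_+}$ with $O_+ \in \Hinf$ outer. Comparing moduli on \TT\ yields $|O_+| = |g|/|h| = 1$ a.e., so $O_+$ is an outer function with unimodular boundary values; by the canonical outer representation it must be a unimodular constant, and writing $\lambda = \ol{O_+}$ gives $g = \lambda h$ with $|\lambda| = 1$. The one step requiring genuine care is the passage from (ii) to the factorisation $g = h\,\ol{O_+}$: the invertibility of $h$ is precisely what forces the \emph{a priori} only-Smirnov quotient $O_1/O_2$ to be bounded, so that Smirnov's theorem can promote it into \Hinf. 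The remaining ingredients, namely that a quotient of outer functions is outer and that a unimodular outer function is constant, are routine consequences of the outer representation.
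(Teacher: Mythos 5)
Your proof is correct and follows the paper's route exactly: the main equivalence is obtained, as in the paper, by specialising Theorem \ref{thm:multiplicadores_onto_kernels} to $w=1$ and noting that the $L^2$ side conditions are automatic. The paper leaves the refinements for $h\in\GG\Linf$ and for unimodular symbols to the cited reference, and your completion of them (boundedness of $O_1/O_2$ forcing $O_+:=O_1/O_2\in\NN\cap\Linf=\Hinf$ outer, and a unimodular outer function being constant) is the standard and correct argument.
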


\begin{cor}[{\cite[Corollary 6.3]{CP24}}]
\label{cor:symbol_zero_on_T}
Let $a\in\TT$, $g\in\Linf$. Then
\[
\ker T_{(z-a)g} = \ker T_{zg}.
\]
\end{cor}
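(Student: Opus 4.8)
The plan is to reduce the statement to Corollary \ref{cor:kernels_iguais}, which characterises equality of Toeplitz kernels: $\ker T_{G_1} = \ker T_{G_2}$ precisely when $G_1/G_2 = \ol{O_1}/\ol{O_2}$ for some outer $O_1, O_2 \in \Hp$. Applying this with $G_1 = (z-a)g$ and $G_2 = zg$, the factor $g$ cancels and the problem collapses to a single elementary task, since
\[
\frac{(z-a)g}{zg} = \frac{z-a}{z}.
\]
Thus it suffices to exhibit outer functions $O_1, O_2 \in \Hp$ with $\frac{z-a}{z} = \frac{\ol{O_1}}{\ol{O_2}}$ on $\TT$; crucially, the arbitrary factor $g$ plays no further role, which is what makes the result hold for every $g\in\Linf$.

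Next I would compute the relevant conjugate explicitly on the circle. Using $\ol z = z^{-1}$ and, because $a\in\TT$ so that $|a|=1$, $\ol a = a^{-1}$, one finds
\[
\ol{\left(\frac{z-a}{z}\right)} = \frac{z^{-1}-a^{-1}}{z^{-1}} = 1 - \frac{z}{a} = -\frac{z-a}{a}.
\]
This computation dictates the choice $O_2 = 1$ and $O_1 = -\frac{z-a}{a}$, i.e. a nonzero unimodular constant times the polynomial $z-a$. With this choice $\ol{O_1} = \frac{z-a}{z}$ on $\TT$, so the hypothesis of Corollary \ref{cor:kernels_iguais} is verified and the desired equality $\ker T_{(z-a)g} = \ker T_{zg}$ follows at once.

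The main obstacle — really the only nontrivial point — is confirming that $O_1$ is genuinely outer, that is, that $z-a$ is outer when its single zero $a$ lies on $\TT$ rather than inside $\DD$. A zero strictly inside the disk would contribute a Blaschke factor and spoil outerness, so the boundary location of the zero is exactly what is needed. I would justify it in two steps: first check that $\log|z-a|$ is integrable on $\TT$ (near $a$ it grows like the logarithm of arc length, hence is integrable), and then verify the defining outer identity $\log|O_1(0)| = \frac{1}{2\pi}\int_{\TT}\log|O_1|$. Reducing to $a=1$ by rotation invariance and using $|e^{it}-1| = 2\,|\sin(t/2)|$ together with the standard evaluation $\int_0^\pi \log|\sin u|\,du = -\pi\log 2$ shows both sides equal $0$, so $z-a$ is outer. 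Since $O_2 = 1$ is trivially outer, this completes the reduction begun in the first step.
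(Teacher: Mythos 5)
Your proof is correct. Note that the paper itself gives no argument for this corollary --- it is quoted from an external reference (\cite[Corollary 6.3]{CP24}) --- so there is no internal proof to compare against; what you have produced is a legitimate self-contained derivation, and the route you chose (reduce to Corollary~\ref{cor:kernels_iguais}, cancel $g$, and check that $\overline{(z-a)/z}$ is a unimodular constant times the outer polynomial $z-a$) is exactly the natural one inside this paper's framework. Your algebra checks out: with $O_1=-(z-a)/a$ and $O_2=1$ one indeed has $\overline{O_1}/\overline{O_2}=(z-a)/z$ on $\TT$, and your verification that $z-a$ is outer via $\log|O_1(0)|=0=\frac{1}{2\pi}\int_\TT\log|z-a|$ is the standard computation and is correct. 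One small point worth a sentence in a written-up version: Corollary~\ref{cor:kernels_iguais} is stated under the standing hypothesis of Section~\ref{sec:Maximal_functions_and_multipliers_between_Toeplitz_kernels} that $g,h\in\Linf\setminus\{0\}$ with nontrivial kernels, whereas the corollary you are proving allows arbitrary $g\in\Linf$; the implication you actually use (symbol ratio of the form $\overline{O_1}/\overline{O_2}$ forces equality of kernels) holds without that hypothesis, and the degenerate cases ($g=0$, or both kernels trivial) are immediate, but you should say so explicitly rather than invoke the corollary silently outside its stated scope.
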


If $\ker T_h$ is a model space in the previous theorem, then we can say more about the multiplier $w$: it must be in \Hp\ and it must be square-rigid.
This is a consequence of the following propositions.

\begin{prop}
\label{prop:multipliers.are.square-rigid}
Let $O\in\Hp$ be outer.
If $O^{-1} \ker T_{\ol z \frac{\ol O}{O}} \subset L^2$ then $O$ is square-rigid.
\end{prop}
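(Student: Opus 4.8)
The plan is to reduce the statement to the characterisation of square-rigidity recorded in Proposition \ref{prop:square.rigid}, which says that $O$ is square-rigid if and only if $\ker T_{\ol z \frac{\ol O}{O}} = \spn\{O\}$. So everything comes down to establishing this equality of subspaces, and the role of the hypothesis $O^{-1}\ker T_{\ol z \frac{\ol O}{O}} \subset L^2$ will be precisely to deliver the nontrivial inclusion.

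One inclusion is immediate and uses nothing but the outerness of $O$. Since $\bigl(\ol z \frac{\ol O}{O}\bigr) O = \ol z \ol O$ with $O$ outer, Theorem \ref{thm: maximal function characterization} shows that $O$ is a maximal function in $\ker T_{\ol z \frac{\ol O}{O}}$; in particular $O \in \ker T_{\ol z \frac{\ol O}{O}}$, so that $\spn\{O\} \subset \ker T_{\ol z \frac{\ol O}{O}}$.

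For the reverse inclusion I would invoke the multiplier characterisation of Theorem \ref{thm:multiplicadores_into_kernels} with the choices $w = O^{-1}$, $h = \ol z \frac{\ol O}{O}$ and $g = \ol z$. Because $O$ is outer, $w = O^{-1} \in \NN$, so $w$ is an admissible multiplier. Condition (ii) of that theorem then splits into two parts: the hypothesis gives exactly $w\ker T_h \subset L^2$, while a direct computation gives
\[
w\,\frac{g}{h} = O^{-1}\,\frac{\ol z}{\ol z \frac{\ol O}{O}} = \frac{1}{\ol O} \in \ol\NN,
\]
again because $O$ is outer. Hence Theorem \ref{thm:multiplicadores_into_kernels} yields $w\ker T_h \subset \ker T_g$, i.e.\ $O^{-1}\ker T_{\ol z \frac{\ol O}{O}} \subset \ker T_{\ol z}$. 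Since $\ker T_{\ol z} = K_z$ is the space of constants, this reads $\ker T_{\ol z \frac{\ol O}{O}} \subset O\,\CC = \spn\{O\}$.

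Combining the two inclusions gives $\ker T_{\ol z \frac{\ol O}{O}} = \spn\{O\}$, and Proposition \ref{prop:square.rigid} then lets us conclude that $O$ is square-rigid. The only genuinely substantive step is the reverse inclusion, and there the main point to get right is the bookkeeping in the multiplier theorem: recognising that the target space is simply the model space $K_z = \ker T_{\ol z}$ of constants, and that the symbol quotient $wg/h$ collapses to $1/\ol O$, so that the outerness of $O$ supplies membership in $\ol\NN$ while the hypothesis supplies the $L^2$-boundedness. Beyond this identification I expect no serious obstacle.
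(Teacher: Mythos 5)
Your proof is correct. The paper's own argument is the direct, self-contained version of what you do: it takes $f_+\in\ker T_{\ol z \frac{\ol O}{O}}$, writes the Riemann--Hilbert relation $\ol z \frac{\ol O}{O} f_+=f_-$ as $O^{-1}f_+ = zf_-\,\ol O^{-1}$, and observes that the left side lies in $\NN\cap L^2=\Hp$ (by the hypothesis) while the right side lies in $\ol\NN\cap L^2=\ol\Hp$, so $O^{-1}f_+\in\Hp\cap\ol\Hp=\CC$ and the kernel is $\spn\{O\}$. You reach exactly the same intermediate conclusion, $O^{-1}\ker T_{\ol z \frac{\ol O}{O}}\subset\CC$, but by invoking Theorem \ref{thm:multiplicadores_into_kernels} with $w=O^{-1}$, $h=\ol z\frac{\ol O}{O}$, $g=\ol z$, after verifying $w\in\NN$, $w\ker T_h\subset L^2$ and $wg/h=1/\ol O\in\ol\NN$; identifying $\ker T_{\ol z}=K_z$ with the constants then closes the argument. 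What your route buys is a clean black-box derivation whose only computation is the symbol quotient; what it costs is that the quoted theorem is doing precisely the two-line Smirnov-class argument the paper writes out by hand, so the direct proof is shorter and does not depend on the standing hypotheses of Section \ref{sec:Maximal_functions_and_multipliers_between_Toeplitz_kernels}. Your verification of the easy inclusion $\spn\{O\}\subset\ker T_{\ol z \frac{\ol O}{O}}$ via Theorem \ref{thm: maximal function characterization} is also fine (membership alone, from $\ol z\frac{\ol O}{O}\cdot O=\ol z\ol O\in\Hm$, would suffice). No gaps.
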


\begin{proof}
Let $f_+\in \ker T_{\ol z \frac{\ol O}{O}}$, i.e., $\ol z \frac{\ol O}{O} f_+ = f_-$ with $f_\pm \in \Hpm$.
If $O^{-1} \ker T_{\ol z \frac{\ol O}{O}} \subset L^2$, then $O^{-1} f_+ \in \NN \cap L^2 = \Hp$ and
\[
\annot{O^{-1} f_+}{\in \Hp}
 = \annot{z f_-\ol O^{-1}}{\ol\NN \cap L^2}
   \in \Hp \cap \ol\Hp = \CC.
\]
Therefore, $f_+ = a O$ with $a\in\CC$ and it follows that $\ker T_{\ol z \frac{\ol O}{O}} = \spn \{ O \}$, i.e., $O$ is square-rigid.
\end{proof}

\begin{prop}
\label{prop:multipliers.are.square-rigid.2}
Let $O\in\Hp$ be outer.
If there exists a function $f\in\ker T_h$ such that $f\in\GG\Hinf$, and $O \ker T_h$ is a Toeplitz kernel, then $O$ is square-rigid.
\end{prop}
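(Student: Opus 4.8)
The plan is to reduce the statement to Proposition~\ref{prop:multipliers.are.square-rigid} by verifying its hypothesis, namely that $O^{-1}\ker T_{\ol z\frac{\ol O}{O}}\subset L^2$. Throughout, write $\ker T_g := O\ker T_h$ for the Toeplitz kernel furnished by the assumption. The invertibility of $f$ enters in two distinct ways, and it is worth isolating them: since $f\in\GG\Hinf$ is in particular outer, the product $Of$ will be outer; and since $f^{-1}\in\Hinf$, I will be able to divide by $f$ without leaving \Hp.

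First I would introduce the outer function $Of\in\Hp$. Because $f\in\ker T_h$, we have $Of\in O\ker T_h=\ker T_g$, and since $Of$ is outer its minimal Toeplitz kernel is, by \eqref{eq:f=IO.max.gives.unimodular.symbol}, $\KKmin(Of)=\ker T_{\ol z\frac{\ol{Of}}{Of}}=\ker T_{\ol z\frac{\ol O}{O}\frac{\ol f}{f}}$. Moreover $\KKmin(Of)\subset\ker T_g=O\ker T_h$, since $Of\in\ker T_g$ and $\KKmin(Of)$ is contained in every Toeplitz kernel containing $Of$.

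The core step is then to transport an arbitrary $\psi\in\ker T_{\ol z\frac{\ol O}{O}}$ into this minimal kernel by multiplying by $f$. Writing $\ol z\frac{\ol O}{O}\psi=\psi_-\in\Hm$, a direct computation gives $\ol z\frac{\ol{Of}}{Of}(f\psi)=\ol f\,\psi_-$, which lies in \Hm\ since $f\in\Hinf$; as also $f\psi\in\Hp\subset L^2$, this shows $f\psi\in\KKmin(Of)$. Combining with the inclusion above, $f\psi\in O\ker T_h$, so $f\psi=O\phi$ for some $\phi\in\ker T_h\subset\Hp$. Dividing by $f$ (here I use $f^{-1}\in\Hinf$) yields $O^{-1}\psi=f^{-1}\phi\in\Hp$. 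Hence $O^{-1}\ker T_{\ol z\frac{\ol O}{O}}\subset\Hp\subset L^2$, and Proposition~\ref{prop:multipliers.are.square-rigid} gives that $O$ is square-rigid.

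I expect the main obstacle to be conceptual rather than computational: recognising that the right object to form is the outer maximal function $Of$ together with its minimal kernel, and that multiplication by the invertible $f$ embeds $\ker T_{\ol z\frac{\ol O}{O}}$ into $\KKmin(Of)\subset O\ker T_h$. Once this is set up, the verification that $f\psi$ lands in $\KKmin(Of)$ and the final division by $f$ are short; the only points needing care are that $\ol f\,\psi_-\in\Hm$ (a boundedness and Fourier-support check using $f\in\Hinf$) and that both uses of invertibility of $f$ — outerness of $Of$ and $f^{-1}\in\Hinf$ — are genuinely essential and cannot be weakened to $f$ merely outer.
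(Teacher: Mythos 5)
Your proof is correct and follows essentially the same route as the paper's: form the outer function $Of\in\ker T_g$, use $\KKmin(Of)=\ker T_{\ol z\frac{\ol{Of}}{Of}}\subset\ker T_g=O\ker T_h$, and divide by the invertible $f$ to land back in \Hp. The only real difference is that you apply Proposition~\ref{prop:multipliers.are.square-rigid} directly to $O$, by pushing each $\psi\in\ker T_{\ol z\frac{\ol O}{O}}$ into $\KKmin(Of)$ as $f\psi$, whereas the paper applies it to $Of$ and then transfers square-rigidity from $Of$ to $O$; your variant has the minor advantage of not needing that final transfer step.
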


\begin{proof}
Let $O \ker T_h = \ker T_g$.
We have that $Of\in\ker T_g$, so $\KKmin(Of) = \ker T_{\ol z \frac{\ol{Of}}{Of}} \subset \ker T_g$.
On the other hand, $O^{-1} \ker T_g \subset \Hp$, so $f^{-1} O^{-1} \ker T_g \subset \Hp$.
Therefore
\[
f^{-1} O^{-1} \ker T_{\ol z \frac{\ol{Of}}{Of}} \subset f^{-1} O^{-1} \ker T_g \subset \Hp
\]
and, by Proposition \ref{prop:multipliers.are.square-rigid}, $Of$ is square-rigid.
Since $f\in\GG\Hinf$, it follows that $O$ is square-rigid.
\end{proof}

\begin{cor}
\label{cor:mult.from.model.space.onto.kernel.are.square-rigid}
If $\ker T_g = w K_\theta$ then $w\in\Hp$ and $w$ is square-rigid.
\end{cor}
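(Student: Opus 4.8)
The plan is to exploit the fact that the model space $K_\theta=\ker T_{\ol\theta}$ contains invertible functions---its reproducing kernels---in order to first promote the a priori information $w\in\NN$ to the statement $w\in\Hp$, then to outerness, after which square-rigidity will follow by a direct appeal to Proposition \ref{prop:multipliers.are.square-rigid.2}. Since $\ker T_g\neq\zero$ by our standing assumption, the inner function $\theta$ must be nonconstant, so for any fixed $\lambda\in\DD$ the reproducing kernel $\kk=\frac{1-\ol{\theta(\lambda)}\theta}{1-\ol\lambda z}$ is a nonzero element of $K_\theta$ lying in $\GG\Hinf$, exactly as noted in Example \ref{ex:ktil_is_max_func}. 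This invertible member of $K_\theta$ is the engine of the whole argument.

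First I would show that $w\in\Hp$. From $wK_\theta=\ker T_g\subset\Hp$ we obtain $w\kk\in\Hp$, and since $\kk^{-1}\in\Hinf$ it follows that $w=(w\kk)\,\kk^{-1}\in\Hp\cdot\Hinf\subset\Hp$. Next I would check that $w$ is outer. The identity $wK_\theta=\ker T_g$ exhibits $w$ as an onto multiplier between two Toeplitz kernels, so $w^{\pm1}\in\NN$ (as recorded before Theorem \ref{thm:multiplicadores_into_kernels}). A function $w\in\Hp\subset\NN$ whose inverse also lies in the Smirnov class can carry no nontrivial inner factor, since any such factor would sit in the denominator of $w^{-1}$ and prevent $w^{-1}$ from belonging to $\NN$; hence $w$ is outer.

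Finally, with $w\in\Hp$ outer established, I would apply Proposition \ref{prop:multipliers.are.square-rigid.2} taking $O=w$, $h=\ol\theta$, and $f=\kk\in\GG\Hinf\cap K_\theta$: since $wK_\theta=\ker T_g$ is a Toeplitz kernel, the proposition yields directly that $w$ is square-rigid, completing the proof. The one point requiring care is the outerness step---the passage from $w\in\Hp$ with $w^{-1}\in\NN$ to $w$ outer---which is essential because Proposition \ref{prop:multipliers.are.square-rigid.2} demands an outer multiplier as input, and deriving outerness from square-rigidity instead would be circular. That step is, however, just the standard description of invertibility in the Smirnov class; every other step is either the invertibility of the reproducing kernel or a direct citation of the earlier proposition, so I anticipate no substantial obstacle.
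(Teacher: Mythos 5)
Your proposal is correct and follows essentially the same route as the paper: the paper's own proof is the one-liner ``since $f=\kk\in K_\theta\cap\GG\Hinf$, the result follows from Proposition \ref{prop:multipliers.are.square-rigid.2}.'' You have merely made explicit the two preparatory facts the paper leaves implicit --- that $w=(w\kk)\,\kk^{-1}\in\Hp$ and that $w$ is outer because $w^{\pm1}\in\NN$ --- which are indeed needed to meet the hypotheses of that proposition, so your write-up is a faithful, slightly more detailed version of the intended argument.
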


\begin{proof}
Since $f=\kk \in K_\theta \cap \GG\Hinf$, the result follows from Proposition \ref{prop:multipliers.are.square-rigid.2}.
\end{proof}

A close connection between multipliers from one Toeplitz kernel into another and maximal functions in those Toeplitz kernels becomes clear from the previous results:
on the one hand, maximal functions completely determine a Toeplitz kernel and can be used as test functions for multipliers;
on the other hand, multipliers from $\ker T_h$ onto $\ker T_g$ determine, by multiplication, maximal functions in $\ker T_g$ from maximal functions in $\ker T_h$.

It also becomes clear moreover that, whether we look at a Toeplitz kernel as defined by a maximal function, or we look at a representation of the form \eqref{eq:Hayashi_representation}, certain factorisations of the symbol naturally appear, either via \eqref{eq:f=IO.max.gives.unimodular.symbol} or via \eqref{eq:simbolo_vindo_de_multiplicador}.
In the following section we study different kinds of symbol factorisations for Toeplitz operators and their relations with maximal functions and multipliers.


\section{Toeplitz kernels and symbol factorisation}
\label{sec:Toeplitz kernels and symbol factorisation}

The study of Toeplitz operators is closely related with various types of factorisation of their symbols (\cite{GoKru92,MiPro86}).
The purpose of those factorisations may be to simplify the study of the properties of the corresponding Toeplitz operator by reducing it to that of a simpler operator, or to obtain a description of the Toeplitz kernel in terms of a multiplier acting on a model space, as in \eqref{eq:Hayashi_representation}, or to establish an equivalence between the existence of a certain type of factorisation of the symbol and certain properties of the Toeplitz operator.

The first question that arises when studying Toeplitz kernels is whether they are different from \zero.
A necessary and sufficient condition for $T_g$ not to be injective can be given in terms of a certain factorisation of its symbol, as follows.

\begin{prop}
[\cite{CMP21}]
\label{prop:ker.not.zero.factorisation}
We have $\ker T_g\neq\zero$ if and only if
\begin{equation}
\label{eq:ker.not.zero.factorisation}
g= \ol\OO \ol\alpha O^{-1}
\end{equation}
with $\alpha$ inner, non constant, and $O,\OO\in\Hp$, outer.
\end{prop}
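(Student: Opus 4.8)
The plan is to establish the two implications separately: necessity via the existence of a maximal function, and sufficiency via an explicit kernel element built from Example~\ref{ex:ktil_is_max_func}.

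For the forward implication, assume $\ker T_g\neq\zero$. Since every nonzero Toeplitz kernel contains a maximal function, I would pick a maximal function $f^M\in\ker T_g$, which is necessarily nonzero. By Theorem~\ref{thm: maximal function characterization} there is an outer $O_1\in\Hp$ with $g f^M=\ol z\,\ol{O_1}$. Taking the inner-outer factorisation $f^M=I O_2$ ($I$ inner, $O_2\in\Hp$ outer and nonzero) and using $1/f^M=\ol I\,O_2^{-1}$ on \TT, I would solve for the symbol:
\[
g=\ol z\,\ol{O_1}\,\ol I\,O_2^{-1}=\ol{zI}\;\ol{O_1}\;O_2^{-1}.
\]
Putting $\alpha=zI$, which is inner and non-constant (it carries the factor $z$), this is precisely the claimed form $g=\ol\OO\,\ol\alpha\,O^{-1}$ with $\OO=O_1$ and $O=O_2$, both outer.

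For the reverse implication, assume $g=\ol\OO\,\ol\alpha\,O^{-1}$ with $\alpha$ inner non-constant and $\OO,O$ outer. I would produce a nonzero kernel element using the functions of Example~\ref{ex:ktil_is_max_func} with $\theta=\alpha$. Set $\phi_+=O\,\tilde k^\alpha_\lambda$ for a fixed $\lambda\in\DD$, where $\tilde k^\alpha_\lambda=(\alpha-\alpha(\lambda))/(z-\lambda)\in\Hinf$. Since $\tilde k^\alpha_\lambda$ is bounded, $\phi_+\in\Hp$, and $\phi_+\neq 0$ because $O$ is outer and $\tilde k^\alpha_\lambda\not\equiv 0$ (as $\alpha$ is non-constant). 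Using the identity $\ol\alpha\,\tilde k^\alpha_\lambda=\ol z\,\ol{k^\alpha_\lambda}$ from that example, I compute
\[
g\phi_+=\ol\OO\,\ol\alpha\,O^{-1}O\,\tilde k^\alpha_\lambda=\ol\OO\,\ol\alpha\,\tilde k^\alpha_\lambda=\ol z\,\ol{\OO\,k^\alpha_\lambda}.
\]
As $k^\alpha_\lambda\in\Hinf$ gives $\OO\,k^\alpha_\lambda\in\Hp$, we get $g\phi_+\in\Hm$, so $\phi_+\in\ker T_g$ and hence $\ker T_g\neq\zero$.

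I expect the only genuine obstacle to be a membership/integrability issue rather than an algebraic one. The symbolic cancellation showing $g\phi_+\in\Hm$ is immediate, but one must guarantee that $\phi_+$ truly lies in \Hp\ and that $g\phi_+$ lands in \Hm\ (not merely in $L^2$); this is exactly why the sufficiency argument relies on the boundedness of $\tilde k^\alpha_\lambda$ and $k^\alpha_\lambda$ in \Hinf, rather than on the naive candidate $\phi_+=O\,\alpha/z$, which fails to be analytic when $\alpha(0)\neq 0$. On the necessity side, the substantive input is the cited existence of a maximal function together with Theorem~\ref{thm: maximal function characterization}; the remaining step is the inner-outer factorisation, which conveniently absorbs the factor $\ol z$ into the non-constant inner function $\alpha=zI$.
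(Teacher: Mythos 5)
Your proof is correct. The sufficiency direction is essentially the paper's own argument: the paper invokes Proposition~\ref{prop:O.k,O.ktil-in-the-kernel}, whose proof contains exactly your computation $g(O\tilde k^\alpha_\lambda)=\ol\OO\,\ol\alpha\,\ol{\tilde k^\alpha_\lambda}\cdot\frac{\tilde k^\alpha_\lambda}{\ol{\tilde k^\alpha_\lambda}}$ — more precisely the identity $\ol\alpha\,\tilde k^\alpha_\lambda=\ol z\,\ol{k^\alpha_\lambda}$ — and you are right that the membership checks ($\tilde k^\alpha_\lambda\in\Hinf$, $\OO k^\alpha_\lambda\in\Hp$) are the real content there. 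The necessity direction is where you diverge: the paper takes an \emph{arbitrary} nonzero $f_+\in\ker T_g$ with $gf_+=f_-$, writes inner-outer factorisations of both sides, $f_+=I_1O_1$ and $f_-=\ol z\,\ol{I_2}\,\ol{O_2}$, and reads off $g=\ol z\,\ol{I_1I_2}\,\ol{O_2}/O_1$; you instead invoke the existence of a maximal function plus Theorem~\ref{thm: maximal function characterization} to get $gf^M=\ol z\,\ol{O_1}$ and then only factorise $f^M$. Your route trades the factorisation of the minus-side function for the (deeper, but already cited in Section~\ref{subsec:TK maximal functions}) existence of maximal functions; the paper's version is more elementary and self-contained, needing nothing beyond inner-outer factorisation applied to both sides of the Riemann--Hilbert relation. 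Both are valid, and your remark that $\alpha=zI$ absorbs the $\ol z$ and is therefore automatically non-constant is exactly the point the paper makes implicitly with $\alpha=zI_1I_2$.
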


\begin{proof}
This is an immediate consequence of the RH description of $\ker T_g$, \eqref{eq:RH nxn}.
If $f_+ = I_1 O_1$ and $f_- = \ol z \ol{I_2} \ol{O_2}$, where $I_1,I_2$ are inner and $O_1,O_2\in\Hp$ are outer, then
\[
g(I_1 O_1) = \ol z \ol{I_2}\ol{O_2} \iff g = \ol z \ol{I_1 I_2} \frac{\ol{O_2}}{O_1}.
\]
So $g$ admits a representation \eqref{eq:ker.not.zero.factorisation}.
Conversely, if $g= \ol\OO \ol\alpha O^{-1}$ then, by Proposition \ref{prop:O.k,O.ktil-in-the-kernel} below, $O k_\lambda^\alpha \in \ker T_g$ for any $\lambda \in\DD$, so $\ker T_g \neq\zero$.
\end{proof}

\begin{cor}[{\cite[Lemma 3.2]{MaPol05}}]
\label{cor:ker.not.zero.factorisation.unimodular}
If $|g|=1$ then $\ker T_g\neq\zero$ if and only if $g=\ol \alpha \ol O / O$ where $\alpha$ is inner, non constant, and $O\in\Hp$ is outer.
\end{cor}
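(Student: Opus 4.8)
The plan is to derive this directly from Proposition \ref{prop:ker.not.zero.factorisation} by exploiting the extra hypothesis $|g|=1$. The converse implication is essentially free: given $g = \ol\alpha\,\ol O/O$ with $\alpha$ inner non-constant and $O\in\Hp$ outer, this is precisely the factorisation \eqref{eq:ker.not.zero.factorisation} in the special case $\OO = O$, so Proposition \ref{prop:ker.not.zero.factorisation} immediately yields $\ker T_g \neq \zero$. One also checks that $|g| = |\ol\alpha|\,|\ol O|/|O| = 1$ a.e.\ on $\TT$, so the representation is consistent with the standing assumption.

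For the forward implication, I would start from $\ker T_g \neq \zero$ and invoke Proposition \ref{prop:ker.not.zero.factorisation} to write $g = \ol\OO\,\ol\alpha\,O^{-1}$ with $\alpha$ inner non-constant and $O,\OO\in\Hp$ outer. The point is then to compare moduli on $\TT$. Since $\alpha$ is inner, $|\ol\alpha| = 1$ a.e.\ on $\TT$, and therefore the hypothesis $|g|=1$ forces $|\OO| = |O|$ a.e.\ on $\TT$.

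The key step is the uniqueness of outer functions with prescribed boundary modulus: two outer functions whose boundary moduli agree a.e.\ on $\TT$ differ by a unimodular constant. Applying this to $\OO$ and $O$ gives $\OO = c\,O$ with $c\in\CC$, $|c|=1$. Substituting, $g = \ol{cO}\,\ol\alpha\,O^{-1} = \ol{c\alpha}\,\ol O/O$, and since $c\alpha$ is again inner and non-constant, renaming $c\alpha$ as $\alpha$ yields the desired form $g = \ol\alpha\,\ol O/O$.

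I expect the only genuinely load-bearing step to be the modulus-uniqueness of outer functions; this is standard, following from the Poisson representation of $\log|O|$, which forces $\OO/O$ to be a unimodular constant. The remaining manipulations --- taking moduli on the circle and absorbing the unimodular constant into the inner factor --- are routine, so the argument should be short.
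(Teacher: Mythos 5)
Your proof is correct and is precisely the derivation the paper intends: the corollary is stated without proof as an immediate consequence of Proposition \ref{prop:ker.not.zero.factorisation}, and the only nontrivial step is exactly the one you isolate, namely that $|g|=1$ forces $|\OO|=|O|$ a.e.\ on $\TT$ and hence $\OO = cO$ with $|c|=1$ by the uniqueness of an outer function with prescribed boundary modulus. Absorbing $c$ into the inner factor then gives the stated form, so nothing is missing.
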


The following result was used in the proof of Proposition \ref{prop:ker.not.zero.factorisation}.

\begin{prop}
\label{prop:O.k,O.ktil-in-the-kernel}
If \eqref{eq:ker.not.zero.factorisation} holds then, $O k_\lambda^\alpha, O \tilde k_\lambda^\alpha \in \ker T_g$, for any $\lambda\in\DD$.
\end{prop}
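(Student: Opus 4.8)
The plan is to verify membership directly through the Riemann–Hilbert description of $\ker T_g$ in \eqref{eq:RH nxn}: for each candidate $\phi_+\in\{O k^\alpha_\lambda,\,O\tilde k^\alpha_\lambda\}$ I would check that $\phi_+\in\Hp$ and that $g\phi_+\in\Hm$, which is exactly the statement that $\phi_+$ solves the RH problem with coefficient $g$, hence $\phi_+\in\ker T_g$. Membership in $\Hp$ is immediate: $O\in\Hp$, while $k^\alpha_\lambda\in\GG\Hinf$ and $\tilde k^\alpha_\lambda\in\Hinf$ are bounded, so both products lie in $\Hp$; and since $g\in\Linf$, the product $g\phi_+$ lies in $L^2$, so $T_g\phi_+=P^+(g\phi_+)$ makes sense and vanishes as soon as $g\phi_+\in\Hm$.

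Everything then hinges on two pointwise identities on \TT. The first, $\ol\alpha\,\tilde k^\alpha_\lambda=\ol z\,\ol{k^\alpha_\lambda}$, is precisely the identity of Example \ref{ex:ktil_is_max_func} with $\theta$ replaced by $\alpha$. The second is its companion,
\[
\ol\alpha\, k^\alpha_\lambda=\ol z\,\ol{\tilde k^\alpha_\lambda},
\]
which I would establish by a short direct computation — both sides reduce to $\frac{\ol\alpha-\ol{\alpha(\lambda)}}{1-\ol\lambda z}$ once $\ol z=1/z$ and $\ol\alpha=1/\alpha$ are substituted on \TT — or, more conceptually, by observing that the conjugation $f\mapsto\alpha\ol z\ol f$ on $K_\alpha$ interchanges $k^\alpha_\lambda$ and $\tilde k^\alpha_\lambda$.

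With the factorisation $g=\ol\OO\,\ol\alpha\,O^{-1}$ substituted, the factor $O^{-1}$ cancels the $O$ in each candidate and the two identities finish the argument: $g\,(O\tilde k^\alpha_\lambda)=\ol\OO\,\ol\alpha\,\tilde k^\alpha_\lambda=\ol z\,\ol{\OO k^\alpha_\lambda}$ and $g\,(O k^\alpha_\lambda)=\ol\OO\,\ol\alpha\, k^\alpha_\lambda=\ol z\,\ol{\OO\tilde k^\alpha_\lambda}$. Since $\OO\in\Hp$ and $k^\alpha_\lambda,\tilde k^\alpha_\lambda$ are bounded, each of $\OO k^\alpha_\lambda$ and $\OO\tilde k^\alpha_\lambda$ lies in $\Hp$, so its conjugate lies in $\ol\Hp$ and, multiplied by $\ol z$, lands in $\ol z\,\ol\Hp=\Hm$. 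Hence $g\phi_+\in\Hm$ in both cases, giving $O k^\alpha_\lambda, O\tilde k^\alpha_\lambda\in\ker T_g$. I would also note the bonus that, because $k^\alpha_\lambda\in\GG\Hinf$ is outer, $\OO k^\alpha_\lambda$ is outer, so the first computation together with Theorem \ref{thm: maximal function characterization} exhibits $O\tilde k^\alpha_\lambda$ as a \emph{maximal} function in $\ker T_g$.

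The final substitution is routine; the only points needing care are the companion identity $\ol\alpha k^\alpha_\lambda=\ol z\,\ol{\tilde k^\alpha_\lambda}$ and the legitimacy of the cancellation $O^{-1}O=1$. The latter is harmless because $O$ is outer, hence nonzero a.e.\ on \TT, so the identity holds a.e.\ and the resulting function is genuinely an $L^2$ (indeed \Hm) function even though $O^{-1}$ need not be bounded. Once these are settled, the conclusion follows at once.
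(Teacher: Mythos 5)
Your proposal is correct and follows essentially the same route as the paper: both arguments check $O k^\alpha_\lambda, O\tilde k^\alpha_\lambda\in\Hp$ via $k^\alpha_\lambda\in\GG\Hinf$, $\tilde k^\alpha_\lambda\in\Hinf$, and then use the two pointwise identities $\ol\alpha\, k^\alpha_\lambda=\ol z\,\ol{\tilde k^\alpha_\lambda}$ and $\ol\alpha\,\tilde k^\alpha_\lambda=\ol z\,\ol{k^\alpha_\lambda}$ to show that $g$ times each candidate lands in $\Hm$. Your extra remarks (the a.e.\ cancellation of $O^{-1}O$, the conjugation interchanging $k^\alpha_\lambda$ and $\tilde k^\alpha_\lambda$, and the maximality of $O\tilde k^\alpha_\lambda$) are correct but not needed beyond what the paper does.
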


\begin{proof}
We have that $k_\lambda^\alpha\in\GG\Hinf$, so $O k_\lambda^\alpha \in \Hp$ and $g(O k_\lambda^\alpha) = \ol \OO \ol z \ol{\tilde k_\lambda^\alpha} \in \Hm$,
therefore $O k_\lambda^\alpha \in \ker T_g$.
On the other hand, $\tilde k_\lambda^\alpha \in \Hinf$, so $O \tilde k_\lambda^\alpha \in \Hp$ and 
\begin{equation}
\label{eq:proof.O.ktil-in-the-kernel}
g (O \tilde k_\lambda^\alpha) = \ol\OO \ol\alpha \ol{\tilde k_\lambda^\alpha} = \ol z \ol\OO \ol{k_\lambda^\alpha} \in \Hm,
\end{equation}
thus $O \tilde k_\lambda^\alpha \in \ker T_g$.
\end{proof}

Proposition \ref{prop:O.k,O.ktil-in-the-kernel} shows that, from a factorisation \eqref{eq:ker.not.zero.factorisation} for $g$, not only do we know that $\ker T_g\neq\zero$, but moreover we can identify families of functions belonging to $\ker T_g$ and establish lower bounds for the dimension of the Toeplitz kernel.
We have the following, where $FBP$ denotes the set of all finite Blaschke products.

\begin{prop}
\label{prop:lower.bounds.from.factorisation}
Let $g$ satisfy \eqref{eq:ker.not.zero.factorisation}.
Then
\begin{enumerate}[(i)]
\item if $\alpha\in FBP$, with exactly $k$ zeroes in \DD\ (counting multiplicities), then $\dim\ker T_g \geq k$, and the equality holds if and only if $O$ is square-rigid;

\item if $\alpha\notin FBP$ or $\dim\ker T_{\frac{\ol O}{O}} = \infty$, then $\ker T_g$ is infinite dimensional.
\end{enumerate}
\end{prop}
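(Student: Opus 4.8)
The plan is to reduce the symbol, read off the lower bounds that settle part (ii) and the inequality in (i), and then prove the one sharp upper bound needed for the equality case.

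First, since $g=\ol{\OO}\ol\alpha O^{-1}=\ol\alpha\,\ol{\OO}/O$ differs from $\ol\alpha\,\ol O/O$ by the factor $\ol{\OO}/\ol O=\ol{O_1}/\ol{O_2}$ with $O_1=\OO$, $O_2=O$ outer, Corollary \ref{cor:kernels_iguais} gives $\ker T_g=\ker T_{\ol\alpha\ol O/O}$, so I may assume $g=\ol\alpha\,\ol O/O$. By Proposition \ref{prop:O.k,O.ktil-in-the-kernel}, $O k_\lambda^\alpha\in\ker T_g$ for every $\lambda\in\DD$. The reproducing kernels $k_\lambda^\alpha$ lie in $K_\alpha$, span it when $\dim K_\alpha<\infty$, and furnish infinitely many independent vectors when $\dim K_\alpha=\infty$; as $O$ is a nonzero (hence injective) multiplier, this yields $\dim\ker T_g\ge\dim K_\alpha$. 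Moreover $g/(\ol O/O)=\ol\alpha\in\ol\Hinf\subset\ol\NN$, so Corollary \ref{cor:kernel.subset.of.another.kernel} gives $\ker T_{\ol O/O}\subset\ker T_g$. Recalling that $\dim K_\alpha$ equals the number $k$ of zeroes of $\alpha$ when $\alpha\in FBP$ and is infinite otherwise, these two inclusions already prove part (ii) (if $\alpha\notin FBP$ or $\dim\ker T_{\ol O/O}=\infty$, then $\ker T_g$ contains an infinite-dimensional subspace) and give the inequality $\dim\ker T_g\ge k$ in part (i).

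The heart of part (i) is the reverse inequality $\dim\ker T_g\le k$ under square-rigidity. For this I would prove a general \emph{peeling} estimate: for any inner $\alpha$ and any $\psi\in\Linf$,
\begin{equation*}
\dim\ker T_{\ol\alpha\psi}\le\dim\ker T_\psi+\dim K_\alpha .
\end{equation*}
The idea is to study $\Psi:\ker T_{\ol\alpha\psi}\to\Hp$, $\Psi f_+=T_\psi f_+=P^+(\psi f_+)$. Because multiplication by $\alpha$ is unitary on $L^2$ one has the orthogonal decomposition $\alpha\Hm=\Hm\oplus K_\alpha$; and $f_+\in\ker T_{\ol\alpha\psi}$ means $\ol\alpha\psi f_+\in\Hm$, i.e.\ $\psi f_+\in\alpha\Hm$, so $\Psi f_+=P^+(\psi f_+)\in K_\alpha$. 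Since $\ol\alpha\in\ol\NN$, Corollary \ref{cor:kernel.subset.of.another.kernel} also gives $\ker T_\psi\subset\ker T_{\ol\alpha\psi}$, whence $\ker\Psi=\ker T_\psi$, and the bound follows from rank--nullity. Taking $\psi=\ol O/O$ produces $\dim\ker T_g\le\dim K_\alpha+\dim\ker T_{\ol O/O}$.

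Now if $O$ is square-rigid then $\ker T_{\ol O/O}=\zero$ by Proposition \ref{prop:square.rigid}, so the peeling bound gives $\dim\ker T_g\le k$, and together with $\dim\ker T_g\ge k$ this yields equality. For the converse, suppose $\dim\ker T_g=k$. Since $O K_\alpha\subset\ker T_g$ with $\dim O K_\alpha=k$, we must have $\ker T_g=O K_\alpha$; thus $\ker T_g$ is a model space multiplied by $O$, and Corollary \ref{cor:mult.from.model.space.onto.kernel.are.square-rigid} forces $O$ to be square-rigid. This establishes the equivalence in (i).

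I expect the peeling estimate to be the main obstacle: the delicate points are confirming that $\Psi$ is genuinely $K_\alpha$-valued (which rests on $\psi f_+\in L^2$ together with the identity $\alpha\Hm=\Hm\oplus K_\alpha$) and that $\ker\Psi$ is exactly $\ker T_\psi$ rather than something larger. Everything else reduces to the inclusions of the first paragraph and to the dictionary between square-rigidity of $O$ and triviality of $\ker T_{\ol O/O}$ supplied by Proposition \ref{prop:square.rigid} and Corollary \ref{cor:mult.from.model.space.onto.kernel.are.square-rigid}.
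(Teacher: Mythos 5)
Your proposal is correct, and for the substantive half of part (i) it takes a genuinely different route from the paper. For part (ii) and the inequality $\dim\ker T_g\ge k$ your argument coincides with the paper's: reduce to the unimodular symbol $\ol\alpha\,\ol O/O$, use $Ok_\lambda^\alpha\in\ker T_g$ (Proposition \ref{prop:O.k,O.ktil-in-the-kernel}) together with the linear independence of reproducing kernels, and use the inclusion $\ker T_{\ol O/O}\subset\ker T_{\ol\alpha\,\ol O/O}=\ker T_g$. The difference is in the equality case of (i): the paper does not prove it at all, simply citing Theorem 2.8 of \cite{CP18_multipliers}, whereas you supply a self-contained argument via the peeling estimate $\dim\ker T_{\ol\alpha\psi}\le\dim\ker T_\psi+\dim K_\alpha$. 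That estimate is sound: $\Hm\subset\alpha\Hm$ and $\alpha\Hm=\Hm\oplus K_\alpha$ do hold (since $\alpha\Hm=L^2\ominus\alpha\Hp=\Hm\oplus(\Hp\ominus\alpha\Hp)$), so $\Psi=P^+\psi(\cdot)$ really maps $\ker T_{\ol\alpha\psi}$ into $K_\alpha$, and $\ker\Psi=\ker T_{\ol\alpha\psi}\cap\ker T_\psi=\ker T_\psi$ by Corollary \ref{cor:kernel.subset.of.another.kernel}; rank--nullity then gives the bound. Combined with Proposition \ref{prop:square.rigid} this yields $\dim\ker T_g\le k$ when $O$ is square-rigid, and your converse (equality forces $\ker T_g=OK_\alpha$, hence $O$ square-rigid by Corollary \ref{cor:mult.from.model.space.onto.kernel.are.square-rigid}) is also valid, since $k^\alpha_\lambda\in K_\alpha\cap\GG\Hinf$ makes that corollary applicable. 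What your approach buys is independence from the external reference, at the cost of introducing an auxiliary lemma; the peeling inequality is moreover of independent interest since it holds for arbitrary $\psi\in\Linf$.
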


\begin{proof}
(i) was proved in \cite[Theorem 2.8]{CP18_multipliers}.

(ii) Since $O k_\lambda^\alpha \in \ker T_g$ for all $\lambda\in\DD$ by Proposition \ref{prop:O.k,O.ktil-in-the-kernel}, and, if $\alpha\notin FBP$, any set $\{ k_{\lambda_j}^\alpha \}$ is linearly independent, it folows that $\dim \ker T_g = \infty$.
The last part follows from 
\[
\ker T_\frac{\ol O}{O} \subset \ker T_{\ol\alpha\frac{\ol O}{O}} = \ker T_{\ol\OO\ol\alpha O^{-1}} = \ker T_g.
\]
\end{proof}

We can also formulate a necessary and sufficient condition for a representation of the form \eqref{eq:ker.not.zero.factorisation} to exist, in terms of maximal functions in $\ker T_g$, as follows.

\begin{prop}
\label{prop:not.zero.fact_max.func}
$g$ admits a factorisation \eqref{eq:ker.not.zero.factorisation} if and only if $\ker T_g$ has a maximal function of the form $f^M = O f_\alpha^M$ where $O\in\Hp$ is outer and $f_\alpha^M$ is a maximal function in the model space $K_\alpha$.
In that case, $\ol\alpha \ol O /O$ is a unimodular symbol for $\ker T_g$.
\end{prop}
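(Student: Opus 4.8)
The plan is to derive both implications from the maximal-function characterisation of Theorem~\ref{thm: maximal function characterization}: a function $f^M\in\Hp$ is maximal in $\ker T_g$ exactly when $gf^M=\ol z\ol O$ for some outer $O\in\Hp$. The recurring ingredient is Example~\ref{ex:ktil_is_max_func} applied to $K_\alpha=\ker T_{\ol\alpha}$, which supplies the bounded maximal function $\tilde k_\lambda^\alpha\in\Hinf$ of $K_\alpha$ together with the identity $\ol\alpha\,\tilde k_\lambda^\alpha=\ol z\,\ol{k_\lambda^\alpha}$, where $k_\lambda^\alpha$ is outer.

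For the forward implication, assume $g=\ol\OO\,\ol\alpha\,O^{-1}$ as in \eqref{eq:ker.not.zero.factorisation}. First I would set $f^M=O\,\tilde k_\lambda^\alpha$; since $O\in\Hp$ and $\tilde k_\lambda^\alpha\in\Hinf$ this product lies in \Hp, and it already has the required form $O f_\alpha^M$ with $f_\alpha^M=\tilde k_\lambda^\alpha$ maximal in $K_\alpha$. A direct computation then gives $g f^M=\ol\OO\,\ol\alpha\,\tilde k_\lambda^\alpha=\ol z\,\ol{\OO\,k_\lambda^\alpha}$; as $\OO$ and $k_\lambda^\alpha$ are outer, so is their product, and Theorem~\ref{thm: maximal function characterization} confirms that $f^M$ is maximal in $\ker T_g$.

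For the converse, suppose $\ker T_g$ has a maximal function $f^M=O f_\alpha^M$ with $f_\alpha^M$ maximal in $K_\alpha$. Applying Theorem~\ref{thm: maximal function characterization} to $\ker T_{\ol\alpha}$ and to $\ker T_g$ yields outer functions $O_\alpha,O'\in\Hp$ with $\ol\alpha f_\alpha^M=\ol z\,\ol{O_\alpha}$ and $g\,O f_\alpha^M=\ol z\,\ol{O'}$. Eliminating $f_\alpha^M$ on \TT\ (where $1/\alpha=\ol\alpha$) produces $g=\ol{(O'/O_\alpha)}\,\ol\alpha\,O^{-1}$, so the natural choice is $\OO:=O'/O_\alpha$. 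The hard part will be to check that this $\OO$ is a genuine outer function in \Hp\ rather than merely a Smirnov-class quotient: it is outer as a ratio of outer functions and lies in \NN, while from $g=\ol\OO\,\ol\alpha\,O^{-1}$ one reads off $|\OO|=|g|\,|O|\in L^2$ because $g\in\Linf$ and $O\in\Hp$; hence $\OO\in\NN\cap L^2=\Hp$, and the factorisation \eqref{eq:ker.not.zero.factorisation} holds with a non-constant inner $\alpha$.

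It remains to identify the unimodular symbol. I would set $u=\ol\alpha\,\ol O/O$, which satisfies $|u|=1$ on \TT, and compute on \TT\ that $g/u=\ol\OO/\ol O$, a quotient of the form $\ol{O_1}/\ol{O_2}$ with $O_1=\OO$ and $O_2=O$ both outer in \Hp. Corollary~\ref{cor:kernels_iguais} then gives $\ker T_g=\ker T_u$, so $u=\ol\alpha\,\ol O/O$ is a unimodular symbol for $\ker T_g$, completing the proof.
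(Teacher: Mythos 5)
Your proposal is correct and follows essentially the same route as the paper: the forward direction uses $O\tilde k_\lambda^\alpha$ with the identity $\ol\alpha\,\tilde k_\lambda^\alpha=\ol z\,\ol{k_\lambda^\alpha}$, and the converse eliminates $f_\alpha^M$ between the two maximal-function identities and concludes $\OO\in\NN\cap L^2=\Hp$ exactly as in the paper. The only (welcome) addition is that you spell out the final claim about the unimodular symbol via Corollary~\ref{cor:kernels_iguais}, which the paper leaves implicit.
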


\begin{proof}
If $g = \ol\OO \ol\alpha O^{-1}$ then $O \tilde k_\lambda^\alpha$ is a maximal function in $\ker T_g$, and $\tilde k_\lambda^\alpha$ is a maximal function in $K_\alpha$.
Conversely, if $O f_\alpha^M$ is a maximal function in $\ker T_g$, where $f_\alpha^M$ is a maximal function in $K_\alpha$, then $g O f_\alpha^M = \ol z \ol{\OO_1}$, with $\OO_1\in\Hp$ outer;
and, since by Theorem \ref{thm: maximal function characterization} we have that $\ol\alpha f_\alpha^M = \ol z \ol{\OO_2}$ with $\OO_2 \in\Hp$ outer, then
\[
g \alpha \ol{\OO_2} = O^{-1} \ol{\OO_1} \iff g = \frac{\ol{\OO_1}}{\ol{\OO_2}} \ol\alpha O^{-1}
\]
where $\OO_1 / \OO_2 \in \NN\cap L^2 = \Hp$ is outer.
\end{proof}

If $\ker T_g = O K_\alpha$, with $O$ and $\alpha$ as in Proposition \ref{prop:not.zero.fact_max.func}, then, for any maximal function $f^M_\alpha$ in $K_\alpha$, $O f^M_\alpha$ is a maximal function in $\ker T_g$, but the converse is not true.
Indeed, in Proposition \ref{prop:not.zero.fact_max.func}, the outer function $O$ is not necessarily square-rigid, so it is not, in general, a multiplier from a model space onto $\ker T_g$.
One may then ask when is the factor $O$ in \eqref{eq:ker.not.zero.factorisation} a multiplier from a model space onto $\ker T_g$.
We have the following, which is an immediate consequence of Theorem \ref{thm:multiplicadores_onto_kernels} (iii).

\begin{prop}
\label{prop:mult.from.max.funct}
Let $f_\alpha^M$ be a maximal function in $K_\alpha$, with $\alpha$ inner.
If $Of_\alpha^M$ is a maximal function in $\ker T_g$, with $O\in\Hp$ outer, then
\begin{equation}
\label{eq:mult.from.max.funct}
\ker T_g = O K_\alpha \iff OK_\alpha \subset L^2 \text{ and } O^{-1} \ker T_g \subset L^2.
\end{equation}
\end{prop}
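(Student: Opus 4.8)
The plan is to read this off directly from Theorem \ref{thm:multiplicadores_onto_kernels}(iii), taking the multiplier to be $w = O$ and the ``source'' kernel to be the model space $K_\alpha = \ker T_{\ol\alpha}$, i.e.\ $h = \ol\alpha$. First I would verify the standing hypothesis $w^{\pm1}\in\NN$ of that theorem: since $O\in\Hp$ is outer, both $O$ and $O^{-1}$ belong to the Smirnov class \NN, so this requirement is immediate.

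With these identifications, the three clauses of condition (iii) of Theorem \ref{thm:multiplicadores_onto_kernels} become $O K_\alpha\subset L^2$, $O^{-1}\ker T_g\subset L^2$, and ``$Of_\alpha^M$ is a maximal function in $\ker T_g$''. The last clause is exactly the standing hypothesis of the present proposition (the ``for some (and hence every)'' wording of the theorem matches the fact that we are handed it for a particular maximal function $f_\alpha^M$ of $K_\alpha$). Absorbing this clause into our hypotheses, condition (iii) collapses to the conjunction $O K_\alpha\subset L^2$ and $O^{-1}\ker T_g\subset L^2$, while condition (i) reads $O K_\alpha = \ker T_g$. The equivalence (i)$\iff$(iii) of the theorem then yields precisely the asserted equivalence \eqref{eq:mult.from.max.funct}.

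There is no genuine obstacle in this argument, as its entire content is supplied by Theorem \ref{thm:multiplicadores_onto_kernels}. The only point requiring attention is the bookkeeping of the previous paragraph, namely recognising that the maximality hypothesis on $Of_\alpha^M$ is one of the three clauses of (iii) rather than an independent assumption, so that once it is taken as given the remaining two clauses constitute exactly the right-hand side of \eqref{eq:mult.from.max.funct}.
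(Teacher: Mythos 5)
Your proposal is correct and is exactly the paper's argument: the paper states that this proposition ``is an immediate consequence of Theorem \ref{thm:multiplicadores_onto_kernels} (iii)'', which is precisely your reduction with $w=O$ and $h=\ol\alpha$. Your additional bookkeeping — checking $O^{\pm1}\in\NN$ for $O$ outer and noting that the maximality hypothesis supplies the third clause of (iii) — is the right way to make the word ``immediate'' explicit.
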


\begin{ex}
Consider $\ker T_{\ol\alpha (z+1)^{1/5}}$ where $\alpha$ is an inner function which is analytic in a neighbourhood of $-1$.
We have that $f^M = \tilde k^\alpha_\lambda (z+1)^{-1/5}$ is a maximal function in that kernel, since $\ol\alpha (z+1)^{1/5} f^M = \ol\alpha \tilde k^\alpha_\lambda = \ol z \ol k^\alpha_\lambda$.
Since $\tilde k^\alpha_\lambda$ is maximal in $K_\alpha$ and $O := (z+1)^{-1/5}$ satisfies the conditions on the right-hand side of \eqref{eq:mult.from.max.funct}, we have that $\ker T_{\ol\alpha (z+1)^{1/5}} = (z+1)^{-1/5} K_\alpha$.
\end{ex}

Some particular types of factorisation of the form \eqref{eq:ker.not.zero.factorisation} for the symbol of a Toeplitz operator are of special interest, allowing us to characterise its kernel in terms of a multiplier acting on a model space, as in \eqref{eq:Hayashi_representation}, namely $L^2$-factorisation (and its generalisations) and maximal function factorisations.

\subsection{$L^2$-factorisation and Toeplitz operators with piecewise continuous symbols}
\label{subsec:L2_fact_PC_symbols}

We say that $g\in\Linf$ admits a {\em $L^2$-factorisation} (\cite[Chapter 2]{LitSpit87}) if and only if it can be represented as a product
\begin{equation}
\label{eq:L2_factorisation}
g=g_- z^n g_+ \quad , \text{ with } n\in\ZZ,\; g_+^{\pm 1} \in \Hp, \; g_-^{\pm 1} \in \ol\Hp.
\end{equation}
When $n=0$, the $L^2$-factorisation is said to be {\em canonical}.
It is easy to see that the following holds.

\begin{prop}
\label{prop:mult.from.L2-fact}
The factorisation \eqref{eq:L2_factorisation} is unique up to constant factors in $g_\pm$.
If such a factorisation exists, then $\ker T_g\neq\zero$ if and only if $n<0$;
in that case $g_+^{-1} z^{|n|-1}$ is a maximal function in $\ker T_g$ and we have
\begin{equation*}
\ker T_g = g_+^{-1} K_{z^{|n|}}.
\end{equation*}
\end{prop}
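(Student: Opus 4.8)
The plan is to translate membership in $\ker T_g$ into the Riemann--Hilbert condition $g\phi_+=\phi_-$, $\phi_\pm\in\Hpm$ from \eqref{eq:RH nxn}, and then to track Fourier supports after substituting $g=g_- z^n g_+$. Writing the condition as $z^n g_+\phi_+ = g_-^{-1}\phi_-$, the left-hand side lies in $z^n\cdot\{\text{functions with Fourier support }\ge 0\}$ because $g_+\phi_+\in H^1$, while the right-hand side has support in $\{\le -1\}$ because $g_-^{-1}\in\ol\Hp$ has support in $\{\le 0\}$ and $\phi_-\in\Hm$ has support in $\{\le -1\}$. First I would dispose of the case $n\ge 0$: there the left side has support $\ge n\ge 0$ and the right side support $\le -1$, so both vanish, forcing $\phi_+=0$ and hence $\ker T_g=\zero$.

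For $n<0$, set $N=|n|$. The identity $g_+\phi_+=z^{N}h$, with $h:=g_-^{-1}\phi_-$ of support $\le -1$, combined with $g_+\phi_+\in H^1$ (support $\ge 0$), pins the support of $g_+\phi_+$ to $\{0,\dots,N-1\}$; that is, $g_+\phi_+$ is a polynomial of degree $\le N-1$, an arbitrary element of $K_{z^{N}}$. Thus $\phi_+=g_+^{-1}p$ with $p\in K_{z^{N}}$, giving $\ker T_g\subseteq g_+^{-1}K_{z^{N}}$. The reverse inclusion I would check directly: for $p\in K_{z^N}$ one has $g\,(g_+^{-1}p)=g_- z^{-N}p$, and since $z^{-N}p$ has support in $\{-N,\dots,-1\}$ and $g_-$ has support $\le 0$, the product lies in \Hm, so $g_+^{-1}p\in\ker T_g$. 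This establishes $\ker T_g=g_+^{-1}K_{z^{N}}$, and in particular $\ker T_g\neq\zero$.

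For the maximal function I would take $f^M=g_+^{-1}z^{N-1}$, the image under $g_+^{-1}$ of the maximal function $z^{N-1}$ of $K_{z^N}$. A one-line computation gives $g f^M = g_- z^{n}z^{N-1} = \ol z\,g_-$; setting $O:=\ol{g_-}$ we have $O\in\Hp$ with $O^{-1}=\ol{g_-^{-1}}\in\Hp$, so $O$ is outer, and $g f^M=\ol z\,\ol O$. By Theorem \ref{thm: maximal function characterization} this is precisely the criterion for $f^M$ to be a maximal function in $\ker T_g$. Finally, for uniqueness up to constants, suppose $g=g_- z^n g_+=\tilde g_- z^m\tilde g_+$ and form $\phi:=g_+/\tilde g_+$ and $\psi:=\tilde g_-/g_-$, so that $\phi=\psi\,z^{m-n}$ with $\phi,\phi^{-1}\in H^1$ and $\psi,\psi^{-1}\in\ol{H^1}$; a short support comparison (testing $\psi^{-1}=\phi^{-1}z^{m-n}$ when $m-n>0$, and $\phi=\psi z^{m-n}$ when $m-n<0$) forces $m=n$, after which $\phi=\psi$ is simultaneously in $H^1$ and $\ol{H^1}$, hence a constant.

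I expect the only delicate point to be making the Fourier-support bookkeeping airtight: justifying that a function which is simultaneously of the form $z^n\cdot(\text{an }H^1\text{ function})$ and an element of support $\le -1$ must vanish, and that an $H^1$ function with support contained in $\{0,\dots,N-1\}$ is a genuine polynomial lying in $K_{z^N}$. Both reduce to the standard facts that multiplication by $z^k$ shifts Fourier coefficients and that $H^1\cap\ol{H^1}=\CC$. Notably, no separate winding-number or index computation is needed, since the value of $n$ is forced by support considerations alone, which is what makes the statement ``easy to see''.
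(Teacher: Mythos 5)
Your proof is correct, and it is essentially the argument the paper has in mind: the paper states Proposition \ref{prop:mult.from.L2-fact} without proof (``it is easy to see''), but its proof of the generalisation, Theorem \ref{thm:fact.aux.polynomial} (which reduces to this case when $P=1$), runs on exactly the same Fourier-support/Liouville bookkeeping you use, identifying $g_+\phi_+$ with a polynomial in $K_{z^{|n|}}$. Your additional verifications (the maximal function via Theorem \ref{thm: maximal function characterization} with $O=\ol{g_-}$, and the uniqueness via $H^1\cap\ol{H^1}=\CC$) are sound and fill in the details the paper omits.
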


A $L^2$-factorisation \eqref{eq:L2_factorisation} such that
\begin{equation}
\label{eq: WH extra condition}
g_-P^+g_-^{-1}I : \mathcal{D}\to L^2 \text{ is bounded}
\end{equation}
for some $\mathcal D$ dense in $L^2$ (for instance, $\mathcal D$ may be the space \RR\ of all rational functions without poles on \TT) is called a {\em Wiener-Hopf factorisation} in $L^2$, also known as a {\em generalised factorisation} in $L^2$ (\cite[Chapter 4, \S 3]{MiPro86}).
This is a particularly important factorisation of the symbol, as we have the following equivalences (\cite{MiPro86}, see also \cite{Ca17}).

\begin{thm}
\label{thm:WH_fact_and_Fredholmness}
The operator $T_g$ is Fredholm if and only if $g$ admits a Wiener-Hopf factorisation in $L^2$;
$T_g$ is invertible if and only if that factorisation is canonical.
\end{thm}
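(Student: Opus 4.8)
The plan is to prove the two implications separately, reading off the Fredholm data from the kernel formula already supplied by Proposition \ref{prop:mult.from.L2-fact} and isolating the precise role of the boundedness condition \eqref{eq: WH extra condition} as the source of closedness of the range. For the sufficiency direction I would start from a factorisation \eqref{eq:L2_factorisation} and treat the kernel and cokernel in parallel. Proposition \ref{prop:mult.from.L2-fact} already gives $\ker T_g=g_+^{-1}K_{z^{|n|}}$ for $n<0$ and $\ker T_g=\zero$ for $n\geq 0$, so $\dim\ker T_g=\max(-n,0)$. For the cokernel I would use $(T_g)^*=T_{\ol g}$ together with the observation that conjugating \eqref{eq:L2_factorisation} gives $\ol g=\ol{g_+}\,z^{-n}\,\ol{g_-}$, which is again an $L^2$-factorisation, now with plus-factor $\ol{g_-}$, minus-factor $\ol{g_+}$ and index $-n$ (since $g_-^{\pm1}\in\ol\Hp$ yields $\ol{g_-}^{\,\pm1}\in\Hp$, and symmetrically for $g_+$). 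Applying Proposition \ref{prop:mult.from.L2-fact} to $\ol g$ then gives $\dim\ker T_{\ol g}=\max(n,0)$. Consequently, once the range is known to be closed, $T_g$ is Fredholm with index $\dim\ker T_g-\dim\ker T_{\ol g}=-n$; for $n=0$ both kernel and cokernel vanish, forcing invertibility.

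Closedness of the range, and the explicit inverse in the canonical case, is exactly where \eqref{eq: WH extra condition} enters. For $n=0$ I would introduce the candidate inverse $B\psi=g_+^{-1}P^+(g_-^{-1}\psi)$ and verify $T_gB=BT_g=I$ on a dense set by a projection computation: writing $P^-$ for the complementary Riesz projection onto \Hm, one finds $T_gB\psi=\psi-P^+\big(g_-P^-(g_-^{-1}\psi)\big)$, and since $g_-\in\ol\Hp$ while $P^-(g_-^{-1}\psi)\in\Hm$, the product $g_-P^-(g_-^{-1}\psi)$ lies in \Hm\ and its $P^+$-projection vanishes; the identity $BT_g=I$ follows symmetrically, now using $P^+(g_+\phi)=g_+\phi$. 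The boundedness condition \eqref{eq: WH extra condition} is precisely what guarantees that $B$ extends to a bounded operator on \Hp, so that $B=T_g^{-1}$. For general $n$ the same condition makes $g_+^{-1}P^+z^{-n}g_-^{-1}$ a bounded two-sided regulariser for $T_g$, yielding closed range and completing the proof that $T_g$ is Fredholm.

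The main obstacle is the necessity direction: manufacturing a Wiener–Hopf factorisation out of the bare Fredholmness of $T_g$. Here I would first record the standard fact that a Fredholm $T_g$ forces $g\in\GG\Linf$. Reducing to index zero by replacing $g$ with $z^{-\kappa}g$, where $\kappa=\operatorname{ind}T_g$, leaves an invertible Toeplitz operator; I would then set $g_+^{-1}=T_g^{-1}(1)$ and build $g_-$ from the adjoint solution $T_{\ol g}^{-1}(1)$, so that $g=g_-g_+$ holds by construction after conjugation and rearrangement.

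The delicate points, which I expect to be the hardest, are twofold: to show that these $H^2$ factors are genuinely invertible in the Smirnov sense, i.e. that $g_\pm^{\pm1}$ land in the correct one-sided Hardy spaces, and, above all, that the resulting factorisation automatically satisfies the boundedness condition \eqref{eq: WH extra condition}. This last ``generalised factorisation'' property is what upgrades a mere $L^2$-factorisation to a Wiener–Hopf one, and establishing that it is forced by invertibility of $T_g$ (rather than having to be imposed) is the crux of the equivalence.
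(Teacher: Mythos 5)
The paper offers no proof of this theorem: it is imported as a classical result with a citation to \cite{MiPro86} (see also \cite{Ca17}), so there is no internal argument to measure yours against. Your outline is the standard textbook route, and the sufficiency half is essentially sound: counting the kernel via Proposition \ref{prop:mult.from.L2-fact}, counting the cokernel by applying the same proposition to the conjugated factorisation $\ol g=\ol{g_+}\,z^{-n}\,\ol{g_-}$, and identifying \eqref{eq: WH extra condition} as what makes the regulariser $g_+^{-1}P^+z^{-n}g_-^{-1}I$ bounded (hence the range closed) is the right mechanism. Two points there should still be made explicit: passing from boundedness of $g_-P^+g_-^{-1}I$ to boundedness of $g_+^{-1}P^+z^{-n}g_-^{-1}I$ uses $g\in\GG\Linf$ (in the canonical case write $g_+^{-1}P^+g_-^{-1}=g^{-1}\cdot g_-P^+g_-^{-1}$), and your projection computation involves products such as $g_-P^-(g_-^{-1}\psi)$ that a priori live only in $L^1$-type spaces, so the algebra must be carried out on the dense subspace $\mathcal D$ and then extended by continuity.

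The genuine gap is the necessity direction; you have located it correctly but not closed it. Three things are missing. First, ``$T_g$ Fredholm $\Rightarrow g\in\GG\Linf$'' is the Hartman--Wintner/Simonenko theorem and needs proof or citation. Second, your reduction to index zero tacitly uses Coburn's lemma ($\ker T_g=\zero$ or $\ker T_{\ol g}=\zero$ for $g\neq 0$) to conclude that an index-zero Fredholm Toeplitz operator is invertible. Third, and most substantially, the construction from $\varphi=T_g^{-1}1$ and $\psi=T_{\ol g}^{-1}1$ must actually be carried out: writing $g\varphi=1+\ol z\ol u$ and $\ol g\psi=1+\ol z\ol v$ with $u,v\in\Hp$, one multiplies to get the identity $(1+\ol z\ol u)\ol\psi=(1+zv)\varphi$, whose two sides lie in $\ol\NN\cap L^1=\ol{H^1}$ and $\NN\cap L^1=H^1$ respectively, hence equal a constant $c$, and $c\neq 0$ since $(1+zv)(0)=1$ and $\varphi\neq 0$. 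This single Smirnov-class step is what simultaneously shows $\varphi$ is outer, produces $g_+=\varphi^{-1}=c^{-1}(1+zv)$ and $g_-=1+\ol z\ol u$ with $g_+^{-1}=\varphi\in\Hp$ and $\ol{g_-^{-1}}=\ol{c^{-1}}\,\psi\in\Hp$, and yields $g=g_-g_+$. Finally, \eqref{eq: WH extra condition} should not be attacked directly: one checks that $g_+^{-1}P^+g_-^{-1}I$ agrees with $T_g^{-1}$ on polynomials, so its boundedness is inherited from invertibility. Without these steps the equivalence is asserted rather than proved.
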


We say that $g$ has a {\em bounded factorisation} if and only if, for some inner function $\alpha$,
\begin{equation}
\label{eq:bounded_factorisation}
g = g_- \alpha^n g_+ \quad , \text{ with } n\in\ZZ,\; g_+\in\GG\Hinf,\; g_-\in\GG\ol\Hinf.
\end{equation}
In that case we say that $g\sim\alpha^n$ (\cite[Definition 2.23]{CP18_multipliers}).

It is clear that, if \eqref{eq:bounded_factorisation} holds, then $T_g = T_{g_-} T_{\alpha^n} T_{g_+}$, where $T_{g_\pm}$ are invertible Toeplitz operators.
When $\alpha = z$, \eqref{eq:bounded_factorisation} is a {\em bounded Wiener-Hopf factorisation}, where \eqref{eq: WH extra condition} is necessarily satisfied with $\mathcal{D} = \Hp$.

It is known that a Wiener-Hopf factorisation in $L^2$ exists for a wide class of functions, including all non-vanishing continuous functions on \TT\ (\cite{GoKru92,MiPro86}), and that it is bounded when the functions belong to certain algebras of continuous functions, called decomposing algebras (\cite{MiPro86}).
Examples of such algebras are the Wiener algebra and the algebra $C^\mu$ of all H\"older continuous functions with exponent $\mu \in  ]0,1[$, which are particularly relevant in applications in physics and engineering (\cite{CamaraCardoso24,CamaraCardoso25,KiMi23,KiAbMiRo21}).
\RR\ is a subspace of any decomposing algebra, so any function in \GG\RR\ admits a bounded Wiener-Hopf factorisation.
In particular, if $B$ is a finite Blaschke product we can write
\begin{equation}
\label{eq:B-WH.fact}
B = B_- z^n B_+,
\end{equation}
with $B_- = \ol{B_+^{-1}}$, $B_+ \in \RR\cap\GG\Hinf$, where $n$ is the number of zeroes of $B$ in \DD.

However a non-vanishing piecewise continuous $g$ on \TT, i.e., with $\inf \{|g(t)| : t\in\TT \} > 0$, may not admit a $L^2$-factorisation, even in very simple cases, such as $g=z^{1/2}$ (\cite[Thm 2.5]{LitSpit87}).
Those functions, with discontinuities $c_k\in\TT$, $k=1,\dots,n$, and $g(z^\pm)\neq 0$ for all $z\in\TT$, can be represented in the form
\begin{equation}
\label{eq:piecewise.cont.decomposition}
g(z) = h(z) \prod_{k=1}^n (z^{\alpha_k})_{c_k}
\end{equation}
where $h\in \GG C(\TT)$, 
\begin{equation*}
\alpha_k = \frac{1}{2 \pi i} \log \frac{g(c_k^-)}{g(c_k^+)} \quad , \quad -\frac{1}{2} \leq \Re \alpha_k < \frac{1}{2},
\end{equation*}
and $(z^{\alpha_k})_{c_k}$ denotes a branch of $z^{\alpha_k}$ with branch cut crossing \TT\ at $c_k$.
The function g admits a $L^2$-factorisation if and only if $\Re \alpha_k \neq -\frac{1}{2}$ for all $k=1,\dots,n$ (\cite[Lemma 4.1]{Dudu79}).
We say in that case that $g$ is 2-regular.
So, if $\Re \alpha_k = -\frac{1}{2}$ for some $k$ (which geometrically means that the line segment with endpoints $g(c_k^-)$ and $g(c_k^+)$ contains the origin in the complex plane), then $g$ does not admit a $L^2$- factorisation.

One can however extend the result of Proposition \ref{prop:mult.from.L2-fact} to that class of symbols by using the following generalisation of the notion of $L^2$-factorisation.

\begin{thm}
\label{thm:fact.aux.polynomial}
Let $\gnotzero $, $g = g_- z^n g_+^{-1}$ with $n\in\ZZ$, $g_+,\ol{g_-} \in \Hp$ outer, such that, for some polynomial $P$ with $p$ simple zeroes ($p\geq 0$) all belonging to \TT, we have
\begin{equation}
\label{eq:fact.aux.polynomial}
Pg_+^{-1} \in \Hp \text{ and } \frac{g_+}{z-\lambda} \notin \Hp \text{ for every } \lambda \in \TT.
\end{equation}
Then, if $n\geq 0$, we have $\ker T_g = \zero$ and, if $n<0$, we have
\begin{equation}
\label{eq:mult.from.fact.aux.polynomial}
ker T_g = g_+ K_{z^{|n|}}.
\end{equation}
\end{thm}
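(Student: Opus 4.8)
The plan is to study, for an arbitrary $\phi_+\in\ker T_g$, the function $w=g_+^{-1}\phi_+$, exactly as in the proof of Proposition~\ref{prop:mult.from.L2-fact}; the role of the hypotheses \eqref{eq:fact.aux.polynomial} is precisely to compensate for the fact that here $g_+^{-1}$ need no longer lie in \Hp. First I would settle the easy inclusion $g_+K_{z^{|n|}}\subset\ker T_g$ for $n<0$: if $q$ is a polynomial of degree $<|n|$ then $g_+q\in\Hp$ and $g\,(g_+q)=g_-z^nq=g_-\ol z^{|n|}q$, whose Fourier spectrum is contained in the strictly negative integers (since $g_-\in\ol\Hp$ has nonpositive spectrum and $\ol z^{|n|}q$ has spectrum in $\{-|n|,\dots,-1\}$), so it lies in \Hm; this direction uses only $g_+\in\Hp$. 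The substance of the theorem is the reverse inclusion together with the vanishing of the kernel when $n\ge0$, and it is here that $P$ intervenes.

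Write $g_-=\ol h$ with $h\in\Hp$ outer. Given $\phi_+\in\ker T_g$, say $g\phi_+=\phi_-$ with $\phi_\pm\in\Hpm$, set $w=g_+^{-1}\phi_+\in\NN$. Writing $\phi_-=\ol z\,\ol\psi$ with $\psi\in\Hp$, the relation $g\phi_+=\phi_-$ rearranges to $w=z^{-n-1}\ol v$ with $v=\psi/h\in\NN$. Two observations drive the argument. First, although $w$ itself need not be square-integrable, the hypothesis $Pg_+^{-1}\in\Hp$ gives $Pw=(Pg_+^{-1})\phi_+\in H^1$. Second, because the zeros of $P$ lie on \TT\ one has the elementary identity $\ol P=c\,\ol z^{\,p}P$ on \TT, with $|c|=1$; combined with $w=z^{-n-1}\ol v$ this yields $z^{\,p-n-1}\,\ol{Pw}=c\,Pv\in\NN$, and since $\ol{Pw}\in L^1$ we get $z^{\,p-n-1}\ol{Pw}\in\NN\cap L^1=H^1$. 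Thus $G:=Pw$ and $z^{\,p-n-1}\ol G$ both belong to $H^1$, and comparing Fourier spectra forces $G$ to be a polynomial with spectrum in $\{0,1,\dots,p-n-1\}$ (so $G=0$ if $p-n-1<0$). In particular $w=G/P$ is rational, with possible poles only at the simple zeros of $P$ on \TT.

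The main obstacle is to upgrade this to $P\mid G$, i.e.\ to show that $w$ has no poles at all; this is exactly where the second condition in \eqref{eq:fact.aux.polynomial} is indispensable. Suppose $P\nmid G$. Reducing $G/P$ to lowest terms $G_1/P_1$ we have $g_+G_1=P_1\phi_+$ with $\gcd(G_1,P_1)=1$ and $P_1$ nonconstant with simple zeros $\lambda_1,\dots,\lambda_q\in\TT$. A partial-fraction decomposition of $G_1/P_1$ then gives
\[
\phi_+=g_+S+\sum_{j=1}^{q}a_j\,\frac{g_+}{z-\lambda_j},
\]
with $S$ a polynomial and every $a_j\neq0$; since $g_+S,\phi_+\in\Hp$, the sum lies in \Hp. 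Multiplying it by $\prod_{i\neq 1}(z-\lambda_i)$ and using that this polynomial does not vanish at $\lambda_1$ isolates, modulo \Hp, a nonzero multiple of $g_+/(z-\lambda_1)$, forcing $g_+/(z-\lambda_1)\in\Hp$ and contradicting \eqref{eq:fact.aux.polynomial}. Hence $P\mid G$.

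Finally, $P\mid G$ together with $\deg G\le p-n-1$ shows that $w$ is a polynomial of degree $\le -n-1$. If $n\ge0$ this degree is negative, so $w=0$ and $\phi_+=0$, giving $\ker T_g=\zero$. If $n<0$ then $w\in K_{z^{|n|}}$ and $\phi_+=g_+w\in g_+K_{z^{|n|}}$, which combined with the first paragraph gives $\ker T_g=g_+K_{z^{|n|}}$. As a consistency check one may note, via Theorem~\ref{thm: maximal function characterization}, that $g_+z^{|n|-1}$ is a maximal function in $\ker T_g$ when $n<0$, since $g\,(g_+z^{|n|-1})=\ol z\,\ol h$ with $h$ outer; together with $g_+^{-1}\ker T_g\subset L^2$ (which the computation above yields, as $w$ turns out to be a polynomial) this also gives $\ker T_g=g_+K_{z^{|n|}}$ through Proposition~\ref{prop:mult.from.max.funct}.
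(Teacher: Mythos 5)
Your proof is correct and follows essentially the same route as the paper's: multiply through by $P$, use $\ol P=c\,\ol z^{\,p}P$ on \TT\ to show that $Pg_+^{-1}\phi_+$ is a polynomial of degree at most $p-n-1$, and then use the hypothesis $g_+/(z-\lambda)\notin\Hp$ to force that polynomial to vanish at the zeros of $P$. The only differences are cosmetic — you keep a general $g_-=\ol h$ instead of first normalising to $\ol{g_+}$ via Corollary~\ref{cor:kernels_iguais}, you record the easy inclusion $g_+K_{z^{|n|}}\subset\ker T_g$ explicitly, and you spell out via partial fractions the step the paper states in one line.
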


\begin{proof}
We have that $\ker T_g = \ker T_{\ol{g_+} z^n g_+^{-1}}$ by Corollary \ref{cor:kernels_iguais}, so $\ker T_g$ is given by $f_+ \in \Hp$ such that $\ol{g_+} z^n g_+^{-1} f_+ = f_-$ with $f_-\in \Hm$.
We have
\begin{align}
\ol{g_+} z^n g_+^{-1} f_+ = f_- & \iff z^n P g_+^{-1} f_+ = \frac{P}{\ol P} \, \ol{P g_+^{-1}} f_- \nonumber \\
		& \iff z^n P g_+^{-1} f_+ = c z^p \, \ol{P g_+^{-1}} f_-
		\label{eq:proof_fact.aux.polynomial}
\end{align}
with $c\in\CC$, where we used the equality $\frac{P}{\ol P} = cz^p$.
If $n\geq 0$ we have
\[
\annot{z^n P g_+^{-1} f_+}{\in (\Hp)^2}
=
c z^p
\annot{P g_+^{-1}}{\in\ol\Hp}
\annot{f_-}{\in \Hm}
= Q_{p-1}
\]
where $Q_{p-1}$ is a polynomial of degree at most $p-1$ (cf. \cite{CMP16} in the context of $L^2(\mathbb{R})$).
Therefore $z^n f_+ = \frac{g_+}{P}\, Q_{p-1}$
but, since $\frac{g_+}{P}$ is not square-integrable in any neighbourhood of any zero of $P$, the polynomial $Q_{p-1}$ must vanish at all the $p$ zeroes of $P$ and it follows that $Q_{p-1}=0$.

If $n<0$, we have from \eqref{eq:proof_fact.aux.polynomial}, following a similar reasoning,
\[
Pg_+^{-1} f_+ = c z^{|n|+p} \ol{Pg_+^{-1}} f_- = Q_{|n|+p-1}
\]
where $Q_{|n|+p-1}$ is a polynomial of degree at most $|n|+p-1$.
Since $Q_{|n|+p-1}$ must vanish at all zeroes of $P$, we get that
$g_+^{-1} f_+ = \tilde Q_{|n|-1} \in K_{z^{|n|}}$.
\end{proof}

\begin{rem}
Similarly to $L^2$-factorisation, the factorisation in Theorem \ref{thm:fact.aux.polynomial} is unique up to constant factors in $g_\pm$.
Note also that a $L_2$-factorisation \eqref{eq:L2_factorisation} satisfies conditions \eqref{eq:fact.aux.polynomial} with $P=1$.
\end{rem}

\begin{ex}
\label{ex:g=+-1}
Consider $g$ such that $g(z) = 1$ if $z \in \TT\cap\CCp$, and $g(z)=-1$ if $z \in \TT\cap\CCm$, where \CCp\ (respectively, \CCm) denotes the upper (respectively, lower) halp-plane.
We can write, with the notation of \eqref{eq:piecewise.cont.decomposition},
$$g = z (z^{-\frac{1}{2}})_{-1} (z^{-\frac{1}{2}})_{1}= g_- z g_+^{-1}$$ 
with $g_- = (1 + \ol z)^{1/2} (1 - \ol z)^{1/2}$ and $g_+ = (z+1)^{1/2} (z-1)^{1/2}$.
Taking $P=(z+1)(z-1)$ and applying Theorem \ref{thm:fact.aux.polynomial}, we get that $\ker T_g = \zero$, since $n=1 \geq 0$.
\end{ex}

Note that, using Corollary \ref{cor:symbol_zero_on_T}, one can extend the previous results to some classes of symbols that vanish on \TT.

We now apply Theorem \ref{thm:fact.aux.polynomial} to obtain a model space representation for Toeplitz kernels with piecewise continuous symbols that do not admit a $L^2$-factorisation.
Furthermore, we show that one can obtain from \eqref{eq:mult.from.fact.aux.polynomial} an isometric model space representation of $\ker T_g$.

\begin{ex}
\label{ex:Finite_dimensional_Toeplitz_kernels}
A class of finite dimensional Toeplitz kernels
\end{ex}

\vspace{-8pt}

We now consider the question of characterising the kernels of a class of Toeplitz operators with symbols of the form $\ol z^{n/2}$ where $n\in\NNN$ is odd.
These symbols do not admit a $L^2$-factorisation (so the associated Toeplitz operator is not Fredholm), but they admit a factorisation as in Theorem \ref{thm:fact.aux.polynomial}.
This yields a representation of $\ker T_g$ of the form \eqref{eq:mult.from.fact.aux.polynomial}.
We show here how this allows for determining an isometric multiplier from a model space onto the Toeplitz kernel and obtain Hayashi's representation \eqref{eq:Hayashi_representation} in a simple and explicit way.

Let $\ol z^{n/2} = \ol z^{(n-1)/2} \; \ol z^{1/2}$ where we take the principal branch of the square-root, so that $\ol z^{1/2}$ is discontinuous on \TT\ at the point $-1$.
We have then the factorisation
\begin{equation}
\label{eq:z^n/2_fact_aux_polyn}
\ol z^\frac{n}{2} = \left( \frac{z+1}{z} \right)^\frac{1}{2} z^{-\frac{n-1}{2}} \frac{1}{(z+1)^\frac{1}{2}}
\end{equation}
where the factors $g_- = \left( \frac{z+1}{z} \right)^{1/2}$ and $g_+ = (z+1)^{1/2}$ satisfy the conditions \eqref{eq:fact.aux.polynomial} with $P(z) = z+1$.

By Theorem \ref{thm:fact.aux.polynomial}, $\ker T_{\ol z^{n/2}} = \zero$, for $n=1$, and
\begin{equation}
\label{eq:mult_Kz^n/2}
\ker T_{\ol z^\frac{n}{2}} = (z+1)^\frac{1}{2} K_{z^N}, \quad N= \frac{n-1}{2} \text{, for } n \geq 3 \text{, odd}.
\end{equation}

To obtain an isometric multiplier from a model space onto $\ker T_{\ol z^{n/2}}$, we can take advantage of the fact that \eqref{eq:mult_Kz^n/2} provides a very simple description of that Toeplitz kernel and yields a basis
\begin{equation}
\BB = \{ (z+1)^\frac{1}{2}, (z+1)^\frac{1}{2} z, \dots , (z+1)^\frac{1}{2} z^{N-1} \},
\end{equation}
from which one can easily determine a basis for the subspace of all functions in the Toeplitz kernel which vanish at $0$:
\begin{equation}
\BBo = \{ (z+1)^\frac{1}{2} z, \dots , (z+1)^\frac{1}{2} z^{N-1} \}.
\end{equation}

The isometric multiplier in Hayashi's representation of $\ker T_{\ol z^{n/2}}$ will be a constant multiple of the orthogonal vector to \BBo\ in that kernel (\cite{Sa94}) which can be obtained by subtracting, from the element $(z+1)^{1/2} \in\BB$, its orthogonal projection into the space with basis \BBo.

To illustrate this result we take a particular case, with $n=7$.
We have
\begin{equation}
\label{eq:mult_Kz^7/2}
\ker T_{\ol z^\frac{7}{2}} = (z+1)^\frac{1}{2} K_{z^3}.
\end{equation}
With the notation above, $\BB = \{ (z+1)^{1/2} z^j, j=0,1,2 \}$ and $\BBo = \{ (z+1)^{1/2} z, (z+1)^{1/2} z^2 \}$.
The function
\begin{equation*}
\UU = (z+1)^\frac{1}{2} - P_\BBo (z+1)^\frac{1}{2} = (z+1)^\frac{1}{2} (1 - a z) (1 - \ol a z),
\end{equation*}
with $a = (1 + 2 i)/5$, is orthogonal to $\spn\BBo = z \ker T_{z \ol z^{7/2}}$, where $P_\BBo$ is the orthogonal projection onto $\spn\BBo$.
So,
\[
u = \frac{\UU}{||\UU||_2}
\]
is an isometric multiplier from a model space of the form $K_{z \alpha}$ onto $\ker T_{\ol z^{7/2}}$.

To determine the inner function $\alpha$, we use \eqref{eq:simbolo_vindo_de_multiplicador} in Theorem \ref{thm:multiplicadores_onto_kernels} together with the representation \eqref{eq:mult_Kz^7/2}:
\[
\ker T_{\ol z^\frac{7}{2}} = (z+1)^\frac{1}{2} K_{z^3} = \frac{\UU}{||\UU||_2} K_{z\alpha}
\]
implies that
\[
K_{z\alpha} = \frac{||\UU||_2}{\UU} (z+1)^\frac{1}{2} K_{z^3} = \frac{||\UU||_2}{(1 - a z) (1 - \ol a z)} K_{z^3} = K_{z B_a B_{\ol a}}
\]
with $B_\lambda (z) = \frac{z-\lambda}{1- \ol\lambda z}$.
Thus, Hayashi's representation for $\ker T_{\ol z^{7/2}}$ is given by
\[
\ker T_{\ol z^\frac{7}{2}} = \frac{(z+1)^\frac{1}{2} (1 - a z) (1 - \ol a z)}{||(z+1)^\frac{1}{2} (1 - a z) (1 - \ol a z)||_2} K_{z B_a B_{\ol a}}.
\]

\begin{rem}
Note that the same method can be applied to any finite dimensional Toeplitz kernel to obtain a representation of the form \eqref{eq:Hayashi_rep_v2}, once a representation of the form $\ker T_g = w K_{z^n}$ has been found.
\end{rem}

\subsection{Maximal function factorisation of symbols}
\label{subsec:Maximal_function_factorisation_of_symbols}

Any maximal function $f^M$ in $\ker T_g$, with inner-outer factorisation $f^M = IO$, where $I$ is inner and $O$ is outer, induces, by \eqref{eq:f=IO.max.gives.unimodular.symbol}, a factorisation of the symbol
\begin{equation}
\label{eq:fact.from.max.funct}
g = \ol{zI} \frac{\ol O}{O} \ol h \ , \quad \text{ with } h \in\Hinf \text{ outer},
\end{equation}
where we used Corollary \ref{cor:kernels_iguais}.
We call such a representation a {\em maximal function factorisation} of $g$.
For $|g|=1$, it takes the form
\begin{equation}
\label{eq:fact.from.max.funct.unimodular}
g = \ol{zI} \frac{\ol O}{O}.
\end{equation}
This factorisation is unique if we fix the outer factor $O$.

For example, if $g=\ol\theta$ with $\theta$ inner and $\ker T_g = K_\theta$, then \kktilo\ is a maximal function in $K_\theta$ with inner-outer factorisation $\kktilo = IO$ where
\[
I = \frac{\kktilo}{\kko} = \ol z \frac{\theta - \theta(0)}{1 - \ol{\theta(0)} \theta}
\]
is inner and $O = \kko = 1 - \ol{\theta(0)} \theta$ is outer ($O\in\GG\Hinf$).
It follows that any inner function $\theta$ can be factorised as
\begin{equation}
\label{eq:theta_max_fact}
\theta = \left( z \frac{\kktilo}{\kko} \right) \frac{\kko}{\ol\kko}.
\end{equation}

It may be useful to replace $z$ in \eqref{eq:fact.from.max.funct.unimodular} by its expression in terms of a Blaschke factor $B_\lambda$ with $\lambda\in\DD$, using the equality $z = B_\lambda \frac{1-\ol\lambda z}{1 - \lambda \ol z}$:
\begin{equation}
\label{eq:modified.max.func.fact}
g = \ol{B_\lambda I} \frac{\ol O (1- \lambda \ol z)}{O (1-\ol\lambda z)}.
\end{equation}
We say that \eqref{eq:modified.max.func.fact} is a {\em modified maximal function factorisation} of $g$.

\vspace{8pt}

If $\ker T_g = u K_{z\theta}$ is Hayashi's representation of a nontrivial Toeplitz kernel, then we see that the isometric multiplier $u$ is the outer factor of a certain maximal function in $\ker T_g$, $f^M = \theta u$.
We may then ask: if $f^M = IO$ is an inner-outer factorisation of a maximal function $f^M$ in $\ker T_g$, {\em when is the outer factor $O$ of the maximal function a multiplier from a model space onto $\ker T_g$} and, if so, whether there is a relation between that model space and the model space $K_I$ defined by the inner factor of $f^M$.
We have the following.

\begin{prop}
\label{prop:model.space.from.maximal.function}
Let $f^M=IO$ be an inner-outer factorisation of a maximal function in $\ker T_g$.
If $\ker T_g = O K_\alpha$ for some inner $\alpha$, then $K_\alpha = K_{zI}$.
\end{prop}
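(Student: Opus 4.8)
\emph{Proof proposal.} The plan is to produce two different unimodular symbols for $\ker T_g$ and then invoke the rigidity of unimodular symbols (Corollary~\ref{cor:kernels_iguais}) to conclude that the inner functions $\alpha$ and $zI$ agree up to a unimodular constant.

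First I would read off a unimodular symbol from the maximal function. Since $f^M = IO$ is a maximal function in $\ker T_g$ with $I$ inner and $O$ outer, identity \eqref{eq:f=IO.max.gives.unimodular.symbol} gives
\[
\ker T_g = \ker T_{\ol{zI}\frac{\ol O}{O}},
\]
so that $\ol{zI}\frac{\ol O}{O}$ is a unimodular symbol for $\ker T_g$.

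Next I would extract a second unimodular symbol from the hypothesis. Writing $K_\alpha = \ker T_{\ol\alpha}$, the assumption reads $\ker T_g = O\,\ker T_{\ol\alpha}$. As $O$ is outer we have $O^{\pm 1}\in\NN$, so the implication \eqref{eq:simbolo_vindo_de_multiplicador} of Theorem~\ref{thm:multiplicadores_onto_kernels}, applied with $w = O$ and $h = \ol\alpha$, yields
\[
\ker T_g = \ker T_{\ol\alpha\frac{\ol O}{O}}.
\]
Both $\ol\alpha\frac{\ol O}{O}$ and $\ol{zI}\frac{\ol O}{O}$ are unimodular, so by Corollary~\ref{cor:kernels_iguais} they differ by a unimodular constant: $\ol\alpha\frac{\ol O}{O} = \lambda\,\ol{zI}\frac{\ol O}{O}$ for some $\lambda\in\CC$ with $|\lambda| = 1$. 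Cancelling the common factor $\frac{\ol O}{O}$ gives $\alpha = \ol\lambda\, zI$.

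Finally, since multiplying an inner function by a unimodular constant leaves the subspace $zI\,\Hp$ unchanged, we obtain $K_\alpha = K_{\ol\lambda\, zI} = K_{zI}$, which is the desired conclusion. The argument is essentially organizational and short; the only points needing care are verifying $O^{\pm 1}\in\NN$ (immediate from $O$ being an outer $H^2$ function) and confirming that both constructed symbols are genuinely unimodular so that Corollary~\ref{cor:kernels_iguais} applies. I do not anticipate a deeper obstacle: the heart of the matter is recognizing that the maximal function factorisation and the model space representation each furnish a unimodular symbol for one and the same Toeplitz kernel.
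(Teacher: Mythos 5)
Your proposal is correct and follows essentially the same route as the paper: both derive the two unimodular symbols $\ol{zI}\,\ol O/O$ (from \eqref{eq:f=IO.max.gives.unimodular.symbol}) and $\ol\alpha\,\ol O/O$ (from the representation $\ker T_g = OK_\alpha$ via \eqref{eq:simbolo_vindo_de_multiplicador}), and then apply Corollary~\ref{cor:kernels_iguais} to conclude $zI = \lambda\alpha$. Your write-up is merely more explicit about the intermediate justifications.
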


\begin{proof}
We have that $\ker T_g = \ker T_{\ol z \ol{I O} / O} = O K_\alpha = \ker T_{\ol\alpha \ol O /O}$.
So, by Corollary \ref{cor:kernels_iguais}, $\ol{zI} = \lambda \ol\alpha$ with $\lambda\in\CC$, $|\lambda|=1$.
\end{proof}

From Proposition \ref{prop:mult.from.max.funct} we also have:

\begin{thm}
\label{thm:multiplier.from.maximal.function}
Let $f^M=IO$ be an inner-outer factorisation of a maximal function in $\ker T_g$.
Then,
\[
\ker T_g = O K_{zI} \iff O K_{zI} \subset L^2 \text{ and } O^{-1} \ker T_g \subset L^2.
\]
\end{thm}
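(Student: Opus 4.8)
The plan is to obtain the statement as a direct specialisation of Proposition~\ref{prop:mult.from.max.funct}, taking $\alpha = zI$. The one thing that must be supplied is a maximal function $f_\alpha^M$ in the model space $K_{zI}$ whose product with the outer factor $O$ reproduces the given maximal function $f^M$ of $\ker T_g$. The natural candidate is the inner factor itself, $f_\alpha^M = I$, since then $O f_\alpha^M = OI = IO = f^M$, which is a maximal function in $\ker T_g$ by hypothesis.

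First I would check that $I$ is a maximal function in $K_{zI} = \ker T_{\ol{zI}}$. By Theorem~\ref{thm: maximal function characterization}, this amounts to verifying that $\ol{zI}\,I = \ol z\,\ol{O'}$ for some outer $O' \in \Hp$. Since $|I| = 1$ on \TT, one has $\ol{zI}\,I = \ol z\,|I|^2 = \ol z$, which is exactly of the required form with $O' = 1$ (outer). In particular $\ol{zI}\,I = \ol z \in \Hm$, so the membership $I \in K_{zI}$ is automatic.

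With this identification in hand, all the hypotheses of Proposition~\ref{prop:mult.from.max.funct} are met, with $\alpha = zI$, $O \in \Hp$ outer, and $O f_\alpha^M = f^M$ a maximal function in $\ker T_g$. Its conclusion,
\[
\ker T_g = O K_{zI} \iff O K_{zI} \subset L^2 \text{ and } O^{-1} \ker T_g \subset L^2,
\]
is precisely the asserted equivalence.

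The only substantive point is the recognition that the inner factor $I$ of $f^M$ plays the role of a maximal function in $K_{zI}$; this is where the extra factor $z$ in $\alpha = zI$ is essential, as it supplies exactly the $\ol z$ demanded on the right-hand side of \eqref{eq:maximal_function_characterization}. Once this is seen there is no genuine obstacle, and the theorem follows at once from Proposition~\ref{prop:mult.from.max.funct}.
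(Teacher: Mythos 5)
Your proposal is correct and follows exactly the route the paper intends: the paper states this theorem as an immediate consequence of Proposition~\ref{prop:mult.from.max.funct}, and the one detail you supply --- that $I$ is a maximal function in $K_{zI}$ because $\ol{zI}\,I=\ol z\cdot\ol{1}$ with $1$ outer, so that $O\cdot I=f^M$ realises the hypothesis of that proposition with $\alpha=zI$ --- is precisely the right verification.
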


\begin{cor}
\label{cor:invertible.O_or_I.FBP_give.multiplier}
With the same assumptions as in Theorem \ref{thm:multiplier.from.maximal.function}, we have $\ker T_g = O K_{zI}$ if $O\in\GG\Hinf$, or if $O$ is square-rigid and $\dim K_I < \infty$.

\end{cor}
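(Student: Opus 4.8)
The plan is to prove the equality $\ker T_g=OK_{zI}$ in each case by verifying the two conditions appearing on the right-hand side of the equivalence in Theorem~\ref{thm:multiplier.from.maximal.function}, namely $OK_{zI}\subset L^2$ and $O^{-1}\ker T_g\subset L^2$. In the first case both are completely routine; in the second case the square-integrability of $O^{-1}\ker T_g$ is the only delicate point, and I would sidestep it entirely with a dimension count, which simultaneously delivers the desired equality.

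Suppose first that $O\in\GG\Hinf$. Then $O$ and $O^{-1}$ are both bounded, so multiplication by either maps $L^2$ into $L^2$. Since $K_{zI}\subset\Hp\subset L^2$ and $\ker T_g\subset\Hp\subset L^2$, the inclusions $OK_{zI}\subset L^2$ and $O^{-1}\ker T_g\subset L^2$ hold at once, and Theorem~\ref{thm:multiplier.from.maximal.function} gives $\ker T_g=OK_{zI}$.

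Now suppose $O$ is square-rigid and $\dim K_I<\infty$, so that $I$, and hence $zI$, is a finite Blaschke product; in particular $K_{zI}\subset\Hinf$, whence $OK_{zI}\subset\Hp$ and the first condition is immediate. For the equality I would argue by dimension. By Corollary~\ref{cor:kernels_iguais} one may replace the symbol by the unimodular $\ol{zI}\frac{\ol O}{O}=\ol O\,\ol{zI}\,O^{-1}$, which is a factorisation of the form \eqref{eq:ker.not.zero.factorisation} with $\OO=O$ and $\alpha=zI$. Proposition~\ref{prop:O.k,O.ktil-in-the-kernel} then gives $Ok_\lambda^{zI}\in\ker T_g$ for every $\lambda\in\DD$; as $\dim K_{zI}<\infty$, the reproducing kernels $k_\lambda^{zI}$ span $K_{zI}$, so $OK_{zI}\subset\ker T_g$. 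Since $O$ is outer, multiplication by $O$ is injective, hence $\dim(OK_{zI})=\dim K_{zI}$, the number of zeroes of $zI$ in $\DD$. On the other hand $zI\in FBP$ and $O$ is square-rigid, so Proposition~\ref{prop:lower.bounds.from.factorisation}(i) yields $\dim\ker T_g=\dim K_{zI}$. A finite-dimensional subspace contained in another of equal dimension coincides with it, so $\ker T_g=OK_{zI}$; this in turn confirms the remaining condition of Theorem~\ref{thm:multiplier.from.maximal.function}, since then $O^{-1}\ker T_g=K_{zI}\subset\Hinf\subset L^2$.

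The boundedness estimates of the first case and the observation $K_{zI}\subset\Hinf$ for a finite Blaschke product $zI$ are routine. The one genuinely delicate point is the condition $O^{-1}\ker T_g\subset L^2$ in the square-rigid case: because $O$ need not be invertible, an arbitrary $\phi_+\in\ker T_g$ only yields $O^{-1}\phi_+\in\NN$ (division by an outer function preserves \NN), and this need not lie in $L^2$ when $O$ is small on \TT. The dimension count circumvents this obstacle by showing that \emph{every} element of $\ker T_g$ is in fact of the form $O\psi$ with $\psi\in K_{zI}\subset\Hinf$, so that the threatened failure of square-integrability cancels automatically.
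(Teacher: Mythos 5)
Your proof is correct and follows essentially the same route as the paper: the case $O\in\GG\Hinf$ is handled by directly checking the two $L^2$-conditions of Theorem \ref{thm:multiplier.from.maximal.function}, and the square-rigid case by a dimension count establishing $OK_{zI}\subset\ker T_g$ together with $\dim OK_{zI}=\dim\ker T_g<\infty$. The only cosmetic difference is that you obtain $\dim\ker T_g=\dim K_{zI}$ from Proposition \ref{prop:lower.bounds.from.factorisation}(i) and the inclusion $OK_{zI}\subset\ker T_g$ from Proposition \ref{prop:O.k,O.ktil-in-the-kernel} via the span of reproducing kernels, whereas the paper cites an external result for the dimension and asserts the inclusion directly; your version is, if anything, slightly more self-contained.
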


\begin{proof}
The result is clear if $O \in \GG\Hinf$, by Theorem \ref{thm:multiplier.from.maximal.function}.
Now, if $\dim K_I < \infty$, then $\dim K_{zI} < \infty$, so $K_{zI} \subset \Hinf$ and $O K_{zI} \subset \Hp$.
On the other hand, since $\ker T_{\ol z \ol O /O} = \spn\{ O \}$ if $O$ is square-rigid, it follows from \cite[Theorem 6.2]{CMP16} that $\dim \ker T_{\ol z \ol I \ol O/ O} = \dim K_I + 1$.
Since $O K_{zI} \subset \ker T_{\ol z \ol I \ol O  / O} = \ker T_g$
and
\[
\dim O K_{zI} = \dim K_{zI} = \dim K_I +1 = \ker T_{\ol z \ol I \frac{\ol O}{O}},
\]
we conclude that $\ker T_g = O K_{zI}$.
\end{proof}

\begin{ex}
Let $E$ be the singular inner function $E(z) = \exp\left( \frac{z+1}{z-1} \right)$ and let $g=\ol E z$.
By Theorem \ref{thm: maximal function characterization}, a maximal function in $\ker T_{\ol E z}$ is
\[
f^M = \frac{k^E_0 (0) \tilde k^E_0 - \tilde k^E_0 (0) k^E_0}{z},
\]
since
\[
\ol E z f^M = \ol z \tilde k^E_0 (0) \ol{k^E_0} \ol{ \left( \frac{k^E_0 (0)}{\tilde k^E_0 (0)} - \frac{\tilde k^E_0}{k^E_0} \right) },
\]
where $\alpha = \tilde k^E_0 / k^E_0$ is an inner function, $k^E_0 (0) = 1- 1/e^2$, $\tilde k^E_0 (0) = E'(0) = 1/e$ and
\[
k^E_0,\ \frac{k^E_0 (0)}{\tilde k^E_0 (0)} - \frac{\tilde k^E_0}{k^E_0} \in \GG\Hinf.
\]

An inner-outer factorisation of $f^M$ (cf. Section \ref{sec: Maximal functions and conjugations}) is $f^M = IO$ with
\begin{equation}
\label{eq:IO_fact_Ez}
I = \ol z \frac{k^E_0 (0) \tilde k^E_0 - \tilde k^E_0 (0) k^E_0}{\ol{k^E_0 (0)} k^E_0 - \ol{\tilde k^E_0 (0)} \tilde k^E_0}
\ , \ 
O = \ol{k^E_0 (0)} k^E_0 - \ol{\tilde k^E_0 (0)} \tilde k^E_0 \in \GG\Hinf,
\end{equation}
so, by Corollary \ref{cor:invertible.O_or_I.FBP_give.multiplier} we obtain a model space representation for $\ker T_{\ol E z}$ as
\[
\ker T_{\ol E z} = \ker T_{\ol z \ol I \frac{\ol O}{O}} = O K_{zI}.
\]

Noting that $O$ is the product of two functions in \GG\Hinf,
\[
O = 
\ol{k^E_0 (0)} k^E_0 \left( 1 - \ol{ \left( \frac{\tilde k^E_0 (0)}{k^E_0 (0)} \right) } \frac{\tilde k^E_0}{k^E_0} \right) = 
\ol{k^E_0 (0)} k^E_0 k^\alpha_0,
\]
and, from \eqref{eq:IO_fact_Ez}, $I = \tilde k^\alpha_0 / k^\alpha_0$, we can also write
\[
\ker T_{\ol E z} = \ker T_{
\annot{\ol z \ol{ \left( \frac{\tilde k^\alpha_0}{k^\alpha_0} \right) e} \frac{\ol{ k^\alpha_0}}{k^\alpha_0}
}{=\ \ol\alpha \text{ by \eqref{eq:theta_max_fact}}
}
\frac{\ol k^E_0}{k^E_0}} = k^E_0 K_\alpha
\]
Now, by \cite[Theorem 10]{Cro94}, $m = \sqrt{1 - |E(0)|^2} / k^E_0$ is an isometric multiplier from $K_E$, which contains $\ker T_{\ol E z}$, onto $K_{z \tilde k^E_0 / k^E_0} = K_{z\alpha}$, which contains $K_\alpha$, so $m$ is also an isometric multiplier from $\ker T_{\ol E z}$ onto $K_\alpha$, i.e.,
\[
\ker T_{\ol E z} = \frac{k^E_0}{\sqrt{1 - |E(0)|^2}} K_\alpha
\]
is an isometric model space representation of $\ker T_{\ol E z}$.
\end{ex}

A crucial role is played in the previous results by the inner-outer factorisation of maximal functions in a Toeplitz kernel.
This turns out to have a surprising connection, which we study in the next section, with conjugations that are naturally defined on Toeplitz kernels.


\section{Maximal functions and conjugations}
\label{sec: Maximal functions and conjugations}

We assume throughout this section that $|g|=1$.
Recall that every nontrivial Toeplitz kernel can be described in terms of a unimodular symbol.

A conjugation on a Hilbert space \HH\ is an antilinear operator \CCC\ on \HH\ such that $\CCC^2 = I$, where $I$ denotes the identity operator, and $\langle \CCC f_1 , \CCC f_2 \rangle = \langle f_2 , f_1 \rangle$ for all $f_1, f_2 \in \HH$.
For $g$ with $|g|=1$, a conjugation $\CCC = \Cg$ can be defined by
\begin{equation}
\label{eq:def.conj.Cg}
\Cg f = \ol g \ol z \ol f, \text{ for all } f \in \ker T_g,
\end{equation}
which can be seen as the natural conjugation on $\ker T_g$ (\cite{DyPlaPtak22}).
It is clear from \eqref{eq:def.conj.Cg} that $|\Cg f| = |f|$ for all $f \in \ker T_g$.

For two unimodular symbols corresponding to the same Toeplitz kernel, the conjugations defined by \eqref{eq:def.conj.Cg} for those symbols differ by a unimodular constant.

Using the conjugation \Cg\ we can reformulate the characterisation of maximal functions presented in Theorem \ref{thm: maximal function characterization}, as being the conjugates, by \Cg, of the outer functions in $\ker T_g$.

\begin{thm}
\label{thm:max_func_charact_with_conj}
Let $f\in\Hp$; $f$ is a maximal function in $\ker T_g$ if and only if $\Cg f$ is outer in \Hp.
\end{thm}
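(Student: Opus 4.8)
The plan is to read the result off directly from the maximal-function characterisation in Theorem \ref{thm: maximal function characterization} by translating the condition $gf = \ol z \ol O$ into a statement about $\Cg f$. The single computation that does all the work is the identity, valid on \TT\ because $|g|=1$ (so $\ol g = 1/g$) and $\ol z = 1/z$,
\[
\Cg f = \ol g\, \ol z\, \ol f = O \iff g f = \ol z\, \ol O .
\]
Indeed, conjugating $gf = \ol z \ol O$ gives $\ol g \ol f = z O$, and multiplying by $\ol z$ yields $\ol g \ol z \ol f = O$; every step is reversible.

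First I would invoke Theorem \ref{thm: maximal function characterization}, which states that $f$ is a maximal function in $\ker T_g$ if and only if $g f = \ol z \ol O$ for some outer $O\in\Hp$. Combining this with the displayed identity, $f$ is maximal precisely when $\Cg f$ equals some outer function $O$, that is, precisely when $\Cg f$ is itself outer. This delivers both implications simultaneously, so the theorem is essentially a reformulation of the earlier characterisation in terms of the conjugation.

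The only point requiring care is the meaning of $\Cg f$ in the ``if'' direction, since a priori \Cg\ is defined on $\ker T_g$ whereas here $f$ is merely assumed to lie in \Hp. I would handle this by reading $\Cg f = \ol g \ol z \ol f$ as the defining formula applied to an arbitrary $f\in\Hp$ (which always produces an element of $L^2$, as $|g|=1$ forces $|\Cg f| = |f|$), and by observing that the hypothesis ``$\Cg f$ is outer'' already presupposes $\Cg f \in \Hp$. Setting $O = \Cg f$, the identity returns $g f = \ol z \ol O$ with $O$ outer, so Theorem \ref{thm: maximal function characterization} yields that $f$ is maximal in $\ker T_g$; in particular $f\in\ker T_g$, so the formula does agree with the genuine conjugation. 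Thus the only ``obstacle'' is this bookkeeping, and the mathematical substance is contained entirely in the one-line computation relating $\Cg f$ to the outer factor $O$.
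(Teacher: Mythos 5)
Your proof is correct and follows exactly the paper's own argument: both rest on the single equivalence $\Cg f = O \iff gf = \ol z\,\ol O$ combined with Theorem \ref{thm: maximal function characterization}. The extra remark about applying the formula $\ol g\,\ol z\,\ol f$ to a general $f\in\Hp$ before knowing $f\in\ker T_g$ is a reasonable piece of bookkeeping that the paper leaves implicit.
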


\begin{proof}
Let $\OO$ denote an outer function in \Hp.
\[
\Cg f = \OO \iff \ol{gzf} = \OO \iff gf = \ol z \ol\OO
\]
which is equivalent by Theorem \ref{thm: maximal function characterization} to $f$ being a maximal function in $\ker T_g$.
\end{proof}

\begin{rem}
If $\theta$ is an inner function, we say that $f\in K_\theta$ generates $K_\theta$ if and only if 
\[
\bigvee \{ \left( S^* \right)^n f : n \geq 0 \} = K_\theta,
\]
where $S^*$ denotes the backward shift on \Hp.
By \cite[Corollary 8.25]{GarMashRoss16}, if $f\in K_\theta$ is outer then $C_\theta f$ generates $K_\theta$.
As a consequence of Theorem \ref{thm:max_func_charact_with_conj}, this is equivalent to saying that every maximal function in  $K_\theta$ generates $K_\theta$.
\end{rem}

Theorem \ref{thm:max_func_charact_with_conj} allows us to obtain, in a simple way, an inner-outer factorisation of any maximal function.

\begin{prop}
\label{prop:IO_fact_max_func_with_conj}
If $f^M$ is a maximal function in $\ker T_g$, then an inner-outer factorisation of $f^M$ is $f^M = IO$, with $I = \frac{f^M}{\Cg f^M}$ and $O = \Cg f^M$.
We have that $g f^M = \ol z \ol O$ where $O$ is the outer factor in an inner-outer factorisation of $f^M$.
\end{prop}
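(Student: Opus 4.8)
The plan is to exhibit the two claimed factors directly and then verify they have the right structure, reducing everything to the unimodular computation underlying \Cg. First I would set $O := \Cg f^M$ and $I := f^M/O$, so that $f^M = IO$ holds by construction; the whole content then lies in checking that $O$ is outer and $I$ is inner. That $O = \Cg f^M$ is outer in \Hp\ is immediate from Theorem \ref{thm:max_func_charact_with_conj}, since $f^M$ is by hypothesis a maximal function in $\ker T_g$.

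The heart of the proof is to show that $I$ is inner. Starting from $\Cg f^M = \ol g \ol z \ol{f^M}$ and using $|g|=1$ (so that $\ol g = 1/g$ and $\ol z = 1/z$ on \TT), I would compute on the circle that $I = gz\, f^M / \ol{f^M}$, whence $|I| = |g|\,|z| = 1$ a.e. It then remains to upgrade this boundary unimodularity to membership in \Hinf. For this I would observe that $I$ lies in the Smirnov class \NN: writing $f^M = I_0 O_0$ for its inner-outer factorisation, we have $I = I_0\,(O_0/O)$, a product of an inner function with a quotient of outer functions; since such a quotient is again outer and hence lies in \NN, we get $I \in \NN$. By the Smirnov maximum principle, a function in \NN\ with bounded boundary values lies in \Hinf; since $|I|=1$ a.e., $I$ is inner. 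Thus $f^M = IO$ with $I$ inner and $O$ outer, and by uniqueness of the inner-outer factorisation (up to a unimodular constant) this is an inner-outer factorisation of $f^M$ with outer factor $O = \Cg f^M$.

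For the remaining identity $g f^M = \ol z \ol O$, I would simply conjugate the defining relation $O = \ol g \ol z \ol{f^M}$: taking complex conjugates and again using $|g|=1$ yields $\ol O = g z f^M$, that is $g f^M = \ol z \ol O$ on \TT. This recovers the characterisation of Theorem \ref{thm: maximal function characterization} and confirms that the outer function appearing there is precisely $\Cg f^M$. The main obstacle is the single step promoting ``$|I|=1$ on \TT'' to ``$I$ inner'': a priori $I$ is only a ratio of \Hp\ functions, so one must exclude a singular inner factor hidden in the denominator $O$, which is exactly what the identification of $I$ as an element of \NN\ (via the outerness of $O_0/O$) together with Smirnov's maximum principle accomplishes.
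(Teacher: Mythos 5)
Your proof is correct and follows essentially the same route as the paper: the paper likewise takes $O=\Cg f^M$ (outer by Theorem \ref{thm:max_func_charact_with_conj}) and argues that $f^M/\Cg f^M$ is a unimodular function in \NN, hence inner. You have simply spelled out the details the paper leaves implicit (why the quotient lies in \NN\ and the appeal to the Smirnov maximum principle), which is fine.
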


\begin{proof}
We have $f^M = \frac{f^M}{\Cg f^M} \, \cdot \, \Cg f^M$, where $\Cg f^M$ is outer by Theorem \ref{thm:max_func_charact_with_conj} and $\frac{f^M}{\Cg f^M}$ is a function in \NN, with modulus one, so it is an inner function.
\end{proof}

\begin{cor}
\label{cor:outer+conj=max_func}
If \OO\ is outer and $\OO \in \ker T_g$, then \Cg\OO\ is a maximal function in $\ker T_g$ with outer factor \OO\ (up to a unimodular constant).
If two maximal functions have the same outer factor then they differ by a unimodular constant.
\end{cor}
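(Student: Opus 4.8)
The plan is to derive both assertions directly from the conjugation characterisation of maximal functions (Theorem \ref{thm:max_func_charact_with_conj}) and the explicit inner-outer factorisation of a maximal function (Proposition \ref{prop:IO_fact_max_func_with_conj}), exploiting the two defining features of the conjugation \Cg: that it maps $\ker T_g$ into itself, and that it is an antilinear involution, so that $\Cg^2 = I$ and $\Cg(\lambda h) = \ol\lambda\,\Cg h$ for $\lambda\in\CC$.

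For the first assertion, I would start from $\OO\in\ker T_g$ outer. Since \Cg\ is a conjugation on $\ker T_g$, we have $\Cg\OO\in\ker T_g\subset\Hp$, so Theorem \ref{thm:max_func_charact_with_conj} applies to $f=\Cg\OO$: this function is a maximal function in $\ker T_g$ precisely when $\Cg(\Cg\OO)$ is outer. But $\Cg(\Cg\OO)=\Cg^2\OO=\OO$, which is outer by hypothesis, so $\Cg\OO$ is indeed a maximal function. To identify its outer factor, I would invoke Proposition \ref{prop:IO_fact_max_func_with_conj}, which asserts that the outer factor of any maximal function $f^M$ is $\Cg f^M$; applied to $f^M=\Cg\OO$ this gives outer factor $\Cg(\Cg\OO)=\OO$. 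As inner-outer factorisations are unique only up to a unimodular constant, the outer factor of $\Cg\OO$ is $\OO$ up to such a constant, as claimed.

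For the second assertion, let $f_1^M$ and $f_2^M$ be maximal functions in $\ker T_g$ sharing the same outer factor. By Proposition \ref{prop:IO_fact_max_func_with_conj} their outer factors are $\Cg f_1^M$ and $\Cg f_2^M$, so the hypothesis reads $\Cg f_1^M=\lambda\,\Cg f_2^M$ for some $\lambda\in\CC$ with $|\lambda|=1$. Applying the antilinear involution \Cg\ to both sides and using $\Cg^2=I$ together with antilinearity yields $f_1^M=\Cg(\lambda\,\Cg f_2^M)=\ol\lambda\,\Cg^2 f_2^M=\ol\lambda\, f_2^M$, so the two maximal functions differ by the unimodular constant $\ol\lambda$.

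There is no serious obstacle here: the statement is essentially a formal consequence of the involutivity of \Cg. The only point requiring care is the bookkeeping of the unimodular constant, which enters because inner-outer factorisation is itself unique only up to a unimodular factor; tracking this factor through the antilinear map \Cg\ (where it gets conjugated) is exactly what produces the \emph{up to a unimodular constant} clauses in both halves of the statement.
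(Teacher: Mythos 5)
Your proof is correct and follows exactly the route the paper indicates: its one-line proof cites precisely $\Cg^2=I$, Theorem \ref{thm:max_func_charact_with_conj} and Proposition \ref{prop:IO_fact_max_func_with_conj}, and your argument is a faithful expansion of those three ingredients, including the careful tracking of the unimodular constant through the antilinear involution.
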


\begin{proof}
This follows from $\Cg^2 = I$, Theorem \ref{thm:max_func_charact_with_conj} and Proposition \ref{prop:IO_fact_max_func_with_conj}.
\end{proof}

In the case where the Toeplitz kernel is a model space $K_\theta$, one can associate to any outer function $F\in K_\theta$ an inner function, $I_F = \frac{C_\theta F}{F}$, which is called the associated inner function of $F$ with respect to $\theta$ (\cite[Definition 8.18]{GarMashRoss16}).
It follows from Proposition \ref{prop:IO_fact_max_func_with_conj} and Corollary \ref{cor:outer+conj=max_func} that $I_F$ is the inner factor of the maximal function with outer factor $F$ (up to a unimodular constant).

\begin{rem}
Note that two linearly independent maximal functions can share the same inner factor in an inner-outer factorisation.
For example, $f_1 (z) = z-1$ and $f_2(z) = z+1$ are both maximal functions  in $K_{z^2}$, with inner factor $I=1$.
\end{rem}

The set of all elements in $\ker T_g$ with the same outer factor as a given maximal function is characterised as follows, generalising an analogous result for model spaces (\cite[Proposition 8.20]{GarMashRoss16}).

\begin{prop}
Let $f^M$ be a maximal function in $\ker T_g$ with inner-outer factorisation $f^M = IO$.
The set of all functions in $\ker T_g$ with outer factor $O$ is
\[
\{ \alpha O : \alpha \text{ inner and } \alpha \preceq I \}.
\]
\end{prop}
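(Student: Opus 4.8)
The plan is to establish both inclusions at once, as a single chain of equivalences driven by the symbol factorisation attached to the maximal function. Since $|g|=1$ and $f^M=IO$ is maximal, equation \eqref{eq:fact.from.max.funct.unimodular} supplies the unimodular symbol $g=\ol{zI}\frac{\ol O}{O}$ for $\ker T_g$. Every function in \Hp\ whose outer factor is $O$ has the form $\alpha O$ with $\alpha$ inner (unimodular constants being absorbed into $\alpha$), so the whole problem reduces to deciding, among all inner $\alpha$, precisely which products $\alpha O$ belong to $\ker T_g$.

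First I would substitute $f=\alpha O$ into the Riemann–Hilbert condition. Using the symbol above,
\[
g\,\alpha O=\ol{zI}\,\frac{\ol O}{O}\,\alpha O=\ol z\,\ol I\,\alpha\,\ol O ,
\]
so $\alpha O\in\ker T_g$ exactly when $\ol z\,\ol I\,\alpha\,\ol O\in\Hm$. As $\Hm=\ol z\,\ol\Hp$, this holds iff $\ol I\,\alpha\,\ol O\in\ol\Hp$, i.e.\ (conjugating on \TT) iff $I\,\ol\alpha\,O\in\Hp$. Rewriting $I\ol\alpha O=\ol\alpha\,(IO)=\ol\alpha f^M=f^M/\alpha$, I conclude that
\[
\alpha O\in\ker T_g\iff f^M/\alpha\in\Hp .
\]
It is worth recording that this same quantity is the value of the conjugation formula, $f^M/\alpha=\ol g\,\ol z\,\overline{\alpha O}=\Cg(\alpha O)$, so the condition is just the requirement that the conjugation of the section send $\alpha O$ into \Hp; this ties the computation to the viewpoint of Theorem \ref{thm:max_func_charact_with_conj} and Proposition \ref{prop:IO_fact_max_func_with_conj}.

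The key step, which I expect to be the only substantive point, is the equivalence $f^M/\alpha\in\Hp\iff\alpha\preceq I$, resting on uniqueness of the inner–outer factorisation: an inner function divides $f^M=IO$ in \Hp\ precisely when it divides the inner part $I$. I would argue it by noting that $f^M/\alpha$ and $f^M$ share the same modulus on \TT, so $f^M/\alpha$ has outer factor $O$ again; writing $f^M/\alpha=I'O$ with $I'$ inner and cancelling the outer factor gives $I=\alpha I'$, which is exactly $\alpha\preceq I$, while the reverse implication is immediate. Everything else is the bookkeeping of the factorisation, so combining this divisibility criterion with the preceding equivalence identifies $\{\,\alpha O:\alpha\text{ inner and }\alpha\preceq I\,\}$ as precisely the set of functions in $\ker T_g$ with outer factor $O$, as claimed.
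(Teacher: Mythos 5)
Your proof is correct and takes essentially the same route as the paper: both substitute $\alpha O$ into the Riemann--Hilbert condition for the unimodular symbol $\ol{zI}\,\ol O/O$ attached to $f^M$ and reduce membership in $\ker T_g$ to whether $\ol I\alpha\ol O\in\ol\Hp$. The only divergence is the final divisibility step, where the paper concludes $\ol I\alpha\in\ol\NN\cap\Linf=\ol\Hinf$ while you invoke uniqueness of the inner--outer factorisation; both are standard and equally valid here.
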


\begin{proof}
If $\alpha O \in \ker T_g$, then by \eqref{eq:f=IO.max.gives.unimodular.symbol} $(\ol z \ol I \ol O / O) (\alpha O) = \ol z \ol{\psi_+}$ with $\psi_+ \in \Hp$, so $\ol I \alpha = \ol{\psi_+} / \ol O \in \ol\NN \cap \Linf = \ol\Hinf$, implying that $\alpha \preceq I$.
Conversely, if $\alpha \preceq I$, then $(\ol z \ol I \ol O / O) (\alpha O) = \ol z \ol I \alpha \ol O \in \Hm$, so $\alpha O \in \ker T_g$. 
\end{proof}

In every nontrivial Toeplitz kernel, there are maximal functions whose outer factor is square-rigid \cite{CMP16}.
It is thus natural to ask whether there exists a maximal function whose outer factor is $1$ (that is, an inner maximal function), or whose inner factor is a unimodular constant (that is, an outer maximal function), or even whether one can construct, from a maximal function, other maximal functions with different outer factors.
Note that, by Corollary \ref{cor:outer+conj=max_func}, a maximal function $f^M$ is defined, up to a unimodular constant, by its outer factor $\Cg f^M$.

The answer to the first question is as follows.

\begin{prop}
\label{prop:inner_maximal_function}
There exists a function $f\in\ker T_g$ such that $\Cg f = 1$, i.e., $\ker T_g$ has an inner maximal function $f^M = \theta$, if and only if $\ker T_g = K_{z \theta}$.
\end{prop}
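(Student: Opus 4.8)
The plan is to combine the conjugation characterisation of maximal functions (Theorem \ref{thm:max_func_charact_with_conj}), the explicit inner--outer factorisation of a maximal function from Proposition \ref{prop:IO_fact_max_func_with_conj}, the unimodular-symbol identity \eqref{eq:f=IO.max.gives.unimodular.symbol}, and the elementary fact that a model space is itself a Toeplitz kernel, $K_{z\theta} = \ker T_{\ol{z\theta}}$.

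For the direct implication, I would start from a function $f \in \ker T_g$ with $\Cg f = 1$. Since the constant $1$ is outer, Theorem \ref{thm:max_func_charact_with_conj} shows that $f$ is a maximal function in $\ker T_g$, and Proposition \ref{prop:IO_fact_max_func_with_conj} then gives its inner--outer factorisation as $f = IO$ with outer factor $O = \Cg f = 1$ and inner factor $I = f/\Cg f = f$; thus $f$ is itself inner, and I set $\theta := f$. Substituting $I = \theta$, $O = 1$ into \eqref{eq:f=IO.max.gives.unimodular.symbol} immediately yields $\ker T_g = \ker T_{\ol{z\theta}} = K_{z\theta}$.

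For the converse, I would assume $\ker T_g = K_{z\theta} = \ker T_{\ol{z\theta}}$. As both $g$ and $\ol{z\theta}$ are unimodular, Corollary \ref{cor:kernels_iguais} gives $g = \lambda\,\ol{z\theta}$ for some $\lambda\in\CC$ with $|\lambda|=1$. I would first observe that $\theta \in K_{z\theta}$, since $\ol{z\theta}\,\theta = \ol z \in \Hm$, so $\theta \in \ker T_{\ol{z\theta}} = \ker T_g$. The candidate producing $\Cg f = 1$ is then forced: solving $\ol g\,\ol z\,\ol f = 1$ gives $f = \ol g\,\ol z = \ol\lambda\,\theta$, which lies in $\ker T_g$ by the previous observation and satisfies $\Cg f = \ol g\,\ol z\,\ol f = 1$ directly. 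Hence $f = \ol\lambda\,\theta$ is the desired (inner, up to a unimodular constant) maximal function.

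The argument is essentially a bookkeeping of the established machinery rather than a place where a genuine obstacle arises; the only points requiring care are tracking the antilinearity of $\Cg$ together with the unimodular constant $\lambda$ supplied by Corollary \ref{cor:kernels_iguais}, and the (immediate) verification that $\theta$ already belongs to $K_{z\theta}$. I expect the compatibility of the symbol $g = \lambda\,\ol{z\theta}$ with the conjugation, and the check $\theta\in K_{z\theta}$, to be the steps most worth spelling out, but neither presents real difficulty.
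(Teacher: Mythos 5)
Your proof is correct and follows essentially the same route as the paper, whose entire proof is the one-line computation $\Cg \theta = 1 \iff \ol{gz\theta} = 1 \iff g = \ol{z\theta}$. Your version is more scrupulous: in the forward direction you justify via Theorem \ref{thm:max_func_charact_with_conj} and Proposition \ref{prop:IO_fact_max_func_with_conj} why the function with $\Cg f = 1$ must itself be inner, and in the converse you correctly absorb the unimodular constant $\lambda$ from Corollary \ref{cor:kernels_iguais} by taking $f = \ol\lambda\,\theta$, a point the paper's terse equivalence chain glosses over.
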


\begin{proof}
$\Cg \theta = 1 \iff \ol{gz\theta} = 1 \iff g=\ol{z\theta}$.
\end{proof}

In general a maximal function $f^M$ is neither inner nor outer, but one can construct from $f^M$ other maximal functions related by a partial order.
Let $f_1^M$ and $f^M_2$ be maximal functions in $\ker T_g$ and let $f_j^M = I_j O_j$, $j=1,2$, be their inner-outer factorisations ($I_j$ inner, $O_j$ outer).

\begin{df}
\label{def:partial_order_max_functs}
We say that 
\begin{equation}
\label{eq:def:partial_order_max_functs}
f_1^M \preceq f^M_2 \text{ if and only if } I_1 \preceq I_2
\end{equation}
(where $I_1 \preceq I_2$ means that $I_2 \ol{I_1} \in \Hinf$).
If $I_1 \prec I_2$, i.e., $I_2 \ol{I_1} \in \Hinf$ is not a constant, then we say that $f_1^M \prec f^M_2$.
\end{df}

The relation defined by \eqref{eq:def:partial_order_max_functs} can also be expressed in terms of the outer factors $O_j$, $j=1,2$.

\begin{prop}
\label{prop:partial_order_outer_kernels}
$f_1^M \preceq f_2^M \iff \ker T_{\ol z \ol{O_2} / O_2} \subset \ker T_{\ol z \ol{O_1} / O_1}$.
\end{prop}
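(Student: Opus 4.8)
The plan is to rewrite the kernel inclusion on the right-hand side as a Smirnov-class divisibility condition via Corollary~\ref{cor:kernel.subset.of.another.kernel}, and then to recognise that condition as the relation $I_1 \preceq I_2$ that defines $f_1^M \preceq f_2^M$.

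First I would make the two symbols explicit. Put $g_j = \ol z\,\ol{O_j}/O_j$ for $j=1,2$, so that the two kernels appearing in the statement are $\ker T_{g_1}$ and $\ker T_{g_2}$. Since $f_j^M = I_j O_j$ is a maximal function in $\ker T_g$, Proposition~\ref{prop:IO_fact_max_func_with_conj} (equivalently Theorem~\ref{thm: maximal function characterization}) gives $g f_j^M = \ol z\,\ol{O_j}$, where $O_j$ is exactly the outer factor of $f_j^M$. Dividing by $f_j^M = I_j O_j$ and using $1/I_j = \ol{I_j}$ on \TT, this reads $g = \ol{I_j}\,g_j$, that is $g_j = I_j\,g$. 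Cancelling the common factor $g$ then yields the clean identity
\[
\frac{g_1}{g_2} = \frac{I_1}{I_2} = I_1\ol{I_2}.
\]

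Next I would invoke Corollary~\ref{cor:kernel.subset.of.another.kernel}, which gives $\ker T_{g_2}\subset\ker T_{g_1}$ if and only if $g_1/g_2\in\ol\NN$; by the identity above this is the same as $I_1\ol{I_2}\in\ol\NN$, equivalently $I_2\ol{I_1}\in\NN$. The final step is to convert this into the partial order: the function $I_2\ol{I_1}$ is unimodular on \TT, hence bounded, so it belongs to $\NN$ if and only if it belongs to $\NN\cap\Linf=\Hinf$, and a function in \Hinf\ with unimodular boundary values is inner; thus $I_2\ol{I_1}\in\NN$ is equivalent to $I_2\ol{I_1}\in\Hinf$, which is precisely $I_1\preceq I_2$, i.e.\ $f_1^M\preceq f_2^M$ by Definition~\ref{def:partial_order_max_functs}. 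Chaining the three equivalences closes the proof. The one point requiring care — the main obstacle, modest as it is — is this last equivalence: one must use that a \emph{unimodular} Smirnov function is inner, not merely $H^\infty$-bounded, so that membership in $\ol\NN$ detects genuine divisibility of the inner factors and not just an $\ol\Hinf$-bound; everything else is bookkeeping with the identity $g_j = I_j g$.
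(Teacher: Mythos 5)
Your proposal is correct and follows essentially the same route as the paper: both reduce the statement to the identity $g_1/g_2 = I_1\ol{I_2}$ (the paper obtains it, up to a unimodular constant, by equating the two maximal-function factorisations of $g$; you obtain it from $g f_j^M = \ol z\,\ol{O_j}$), and both then apply Corollary~\ref{cor:kernel.subset.of.another.kernel} together with $\ol\NN\cap\Linf=\ol\Hinf$ to translate the kernel inclusion into $I_1\preceq I_2$. The only cosmetic difference is a possible unimodular constant in $g f_j^M=\ol z\,\ol{O_j}$ when $O_j$ is not normalised as $\Cg f_j^M$, which affects nothing.
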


\begin{proof}
Since both $f_1^M$ and $f_2^M$ are maximal functions in $\ker T_g$, by \eqref{eq:fact.from.max.funct.unimodular} we have that
\[
\ol{I_1} \frac{\ol{O_1}}{O_1} = c \ol{I_2} \frac{\ol{O_2}}{O_2}, \text{ with } c\in\CC, |c|=1,
\]
and $\ol z \frac{\ol{O_1}}{O_1} = (c \ol{I_2} I_1)\, \ol z \, \frac{\ol{O_2}}{O_2}$.
If $f_1^M \preceq f_2^M$, then $c \ol{I_2} I_1 \in \ol\Hinf$, so $\ker T_{\ol z \frac{\ol{O_2}}{O_2}} \subset \ker T_{\ol z \frac{\ol{O_1}}{O_1}}$.
Conversely, if the latter inclusion holds, then by Corollary \ref{cor:kernel.subset.of.another.kernel} we have that
\[
\frac{O_2}{\ol{O_2}} \frac{\ol{O_1}}{O_1} \in \ol\NN \implies c \ol{I_2} I_1 \in \ol\NN\cap\Linf = \ol\Hinf,
\]
so $I_1 \preceq I_2$.
\end{proof}

Noting that $O_j = \Cg f_j^M$, $j=1,2$ (up to a unimodular constant) and $\ker T_{\ol z \ol{O_j} / O_j} = \KKmin (O_j)$, we see that the relation \eqref{eq:def:partial_order_max_functs} between maximal functions can moreover be expressed in terms of the conjugation \Cg:
\begin{equation*}
\label{eq:partial_order_minimal_kernel}
f_1^M \preceq f_2^M \iff \KKmin(\Cg f_1^M) \supset \KKmin(\Cg f_2^M).
\end{equation*}
It is easy to see that, if $f_1^M$ is outer ($I_1\in\CC$) then there exists no maximal function $f^M$ in $\ker T_g$ such that $f^M \prec f_1^M$ and, if $f_2^M = I_2 O_2$ with $O_2$ square-rigid, then there exists no maximal function $f^M$ such that $f_2^M \prec f^M$.

\vspace{8pt}

Now we consider the question of obtaining, from a given maximal function in $\ker T_g$, other maximal functions related to it by the partial order \eqref{eq:def:partial_order_max_functs}.
In what follows we assume that $f^M$ is a maximal function in $\ker T_g$, with inner-outer factorisation $f^M = I O$.

\begin{prop}
\label{prop:maximal_function_prescribed_inner_factor}
For every inner function $\alpha$ such that $\alpha \prec I$, there exists a maximal function $f_\alpha^M$ in $\ker T_g$, with inner-outer factorisation $f_\alpha^M = \alpha\OO$, where $\OO\in\Hp$ is outer, given by
\begin{equation}
\label{eq:outer_factor_max_func_prescribed_inner_factor}
\OO = O (\lambda + I \ol \alpha ), \text{ with } \lambda \in\CC, |\lambda| = 1.
\end{equation}

We have then $f_\alpha^M \prec f^M$ and $\ker T_{\ol z \frac{\ol\OO}{\OO}} = \ker T_{\ol z \alpha \ol I \frac{\ol O}{O}} \supsetneq \ker T_{\ol z \frac{\ol O}{O}}$.
\end{prop}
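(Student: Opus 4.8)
The plan is to verify, in turn, that $\OO$ is outer (so that $f_\alpha^M=\alpha\OO$ is literally an inner--outer factorisation), that $f_\alpha^M$ is a maximal function in $\ker T_g$, and finally that the stated order and kernel relations hold. Throughout I write $\beta:=I\ol\alpha$; since $\alpha\preceq I$ we have $\beta\in\Hinf$ with $|\beta|=1$ on \TT, so $\beta$ is inner, and $\alpha\prec I$ makes $\beta$ non-constant. Because $f^M=IO$ is maximal and $|g|=1$, formula \eqref{eq:fact.from.max.funct.unimodular} gives $g=\ol{zI}\,\ol O/O$, which I will use as the working expression for the symbol.

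The central computation is to feed $f_\alpha^M=\alpha\OO=\alpha O(\lambda+\beta)$ into this symbol. Using $\ol I\alpha=\ol\beta$ and $|I|=|\alpha|=1$ on \TT, a direct simplification gives
\begin{equation*}
g f_\alpha^M = \ol z\,\ol O\,(1+\lambda\ol\beta) = \ol z\,\ol{O(1+\ol\lambda\beta)}.
\end{equation*}
Setting $O_*:=O(1+\ol\lambda\beta)$, this is exactly the form $g f_\alpha^M=\ol z\,\ol{O_*}$ of Theorem \ref{thm: maximal function characterization}, so once $O_*$ is known to be outer it follows at once that $f_\alpha^M\in\ker T_g$ and that it is maximal there. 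One also checks $O_*=\ol\lambda\OO$, so the outer factor of $f_\alpha^M$ is $\OO$ up to the unimodular constant $\lambda$, confirming $f_\alpha^M=\alpha\OO$ as the inner--outer factorisation.

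The main obstacle is the outerness of $\OO$, equivalently of the factor $1+\ol\lambda\beta$ (as $O$ is already outer). Here I would use that $\ol\lambda\beta$ is inner and non-constant, so $|\ol\lambda\beta|<1$ on \DD\ and hence $\Re(1+\ol\lambda\beta)\ge 1-|\beta|>0$ on \DD; a holomorphic function on \DD\ with positive real part is non-vanishing, has bounded argument, and is therefore outer. This is the one place where a fact not contained in the earlier sections is invoked, and I would state it explicitly (with a reference), since it is precisely what guarantees both that $\OO$ is genuinely the outer factor and that $O_*$ above is outer.

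For the remaining assertions, $f_\alpha^M\prec f^M$ is immediate from Definition \ref{def:partial_order_max_functs}, the inner factors satisfying $\alpha\prec I$ by hypothesis. For the symbol identity I would compute $\ol\OO/\OO=(\ol O/O)\,(\ol\lambda+\ol\beta)/(\lambda+\beta)$ and note the pointwise identity $(\ol\lambda+\ol\beta)/(\lambda+\beta)=\ol\lambda\,\ol\beta=\ol\lambda\,\alpha\ol I$ on \TT; thus $\ol z\,\ol\OO/\OO$ differs from $\ol z\,\alpha\ol I\,\ol O/O$ only by the unimodular constant $\ol\lambda$, giving $\ker T_{\ol z\ol\OO/\OO}=\ker T_{\ol z\alpha\ol I\ol O/O}$. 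Finally, since $\alpha\ol I=\ol\beta\in\ol\Hinf$, Corollary \ref{cor:kernel.subset.of.another.kernel} yields the inclusion $\ker T_{\ol z\ol O/O}\subset\ker T_{\ol z\ol\beta\,\ol O/O}$; the inclusion is strict because equality would force, by Corollary \ref{cor:kernels_iguais}, that $\ol\beta$ be a unimodular constant, contradicting the non-constancy of $\beta$ guaranteed by $\alpha\prec I$.
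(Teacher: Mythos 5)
Your proof is correct and follows essentially the same route as the paper: your central computation showing $g f_\alpha^M=\ol z\,\ol{O_*}$ with $O_*=\ol\lambda\,\OO$ outer is exactly the paper's computation of $\Cg(\alpha\OO)=\ol\lambda\,\OO$, just phrased via Theorem \ref{thm: maximal function characterization} rather than its conjugation reformulation in Theorem \ref{thm:max_func_charact_with_conj}. You additionally justify the outerness of $\lambda+I\ol\alpha$ (positive real part on \DD), which the paper merely asserts, and give a cleaner derivation of the final identity and strict inclusion, where the paper's displayed symbol quotient contains a sign typo.
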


\begin{proof}
For any $\lambda \in \CC$, $|\lambda|=1$, $\lambda + I \ol\alpha$ is outer in \Hinf, so $\OO$ is outer and
\begin{align*}
\Cg ( \alpha\OO) &= \Cg \left(\alpha O \left(\lambda + I \ol\alpha\right) \right) = \ol {gz} \ol O \left(\ol{ I + \lambda \alpha}\right) \\
&= \left(z I \frac{O}{\ol O}\right) \ol z \ol O \left(\ol{ I + \lambda \alpha}\right) = \ol\lambda O \left(\lambda + I \ol\alpha\right)
\end{align*}
where the right-hand side represents an outer function in \Hp.
Thus, by Theorem \ref{thm:max_func_charact_with_conj}, $f_\alpha^M = \alpha O$ is a maximal function in $\ker T_g$ such that $f_\alpha^M \prec f^M$ and the last inclusion follows from the equality
\[
\frac{\ol\OO}{\OO} = \frac{\ol O (\ol\lambda - \ol I \alpha)}{O (\lambda - I \ol\alpha)} = \frac{\ol O}{O} \ol\lambda \ol I \alpha.
\]
\end{proof}

Taking $\alpha=1$ in Proposition \ref{prop:maximal_function_prescribed_inner_factor}, we see that
\begin{equation}
\OO^M = O (\lambda + I )
\end{equation}
is an outer maximal function in $\ker T_g$, for any $\lambda\in\CC$, $|\lambda|=1$.
Thus we get the following

\begin{cor}
\label{cor:outer_max_func}
Every nontrivial Toeplitz kernel has a maximal function $O^M$ which is outer and we can write
\begin{equation}
\label{eq:outer_max_func}
\ker T_g = \ker T_{\ol z \frac{\ol{O^M}}{O^M}}.
\end{equation}
\end{cor}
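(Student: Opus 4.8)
The plan is to manufacture an outer maximal function out of an arbitrary one by annihilating its inner factor, applying Proposition \ref{prop:maximal_function_prescribed_inner_factor} with the trivial inner function $\alpha = 1$. Since $\ker T_g$ is nontrivial it contains a maximal function $f^M$, and by Proposition \ref{prop:IO_fact_max_func_with_conj} I may write its inner-outer factorisation as $f^M = IO$ with $O = \Cg f^M$.

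First I would dispose of the degenerate case. If $I$ is a unimodular constant, then $f^M$ is itself outer (up to a unimodular constant), and I simply take $O^M = f^M$. Otherwise $I$ is a non-constant inner function, so $1 \prec I$ in the sense of Definition \ref{def:partial_order_max_functs} and the hypothesis of Proposition \ref{prop:maximal_function_prescribed_inner_factor} holds with $\alpha = 1$. That proposition then yields a maximal function $O^M := O(\lambda + I)$ in $\ker T_g$, for any $\lambda \in \CC$ with $|\lambda| = 1$; it is outer, being the product of the outer functions $O$ and $\lambda + I$.

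With an outer maximal function $O^M$ secured, the symbol is read off immediately. Its inner-outer factorisation has trivial inner factor, so \eqref{eq:f=IO.max.gives.unimodular.symbol} applied with inner part $1$ gives
\[
\ker T_g = \KKmin(O^M) = \ker T_{\ol z \frac{\ol{O^M}}{O^M}},
\]
which is exactly \eqref{eq:outer_max_func}. I do not expect a real obstacle here: the substance is entirely contained in Proposition \ref{prop:maximal_function_prescribed_inner_factor}, and the only point demanding attention is the case split on whether the initial maximal function is already outer, since the relation $1 \prec I$ — and hence the applicability of the proposition with $\alpha = 1$ — fails precisely when $I$ is constant.
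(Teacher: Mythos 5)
Your proposal is correct and follows essentially the same route as the paper, which likewise obtains $O^M = O(\lambda + I)$ by taking $\alpha = 1$ in Proposition \ref{prop:maximal_function_prescribed_inner_factor} and then reads off the symbol from \eqref{eq:f=IO.max.gives.unimodular.symbol}. Your explicit case split for constant $I$ (where $1 \prec I$ fails) is a small point of care the paper glosses over, but it does not change the argument.
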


\begin{rem}
\label{rem:kernel_symbol_outer_Hayashi}
Hayashi showed in \cite[Lemma 5]{Ha85} that, for every Toeplitz operator $T_g$ with non-zero kernel, there exists an outer function $h\in\Hp$, such that $\ker T_g = \ker T_{\ol h /h}$.
If we write $z$ as a quotient of an outer function by its complex conjugate, $z=(z - \eta) / (\ol z - \ol\eta)$, with $\eta\in\TT$, we see that Corollary \ref{cor:outer_max_func} also provides a representation for $\ker T_g$ in the form $\ker T_{\ol h /h}$ where $h$, which in general does not belong to the Toeplitz kernel, can be obtained from any maximal function $f^M$ in a simple way.
From \eqref{eq:outer_max_func} we can write, for instance, $\ker T_g = \ker T_{\ol h /h}$ with $h= (z+1) (\Cg f^M + f^M)$.
\end{rem}

As a consequence of the previous results we have the following.

\begin{cor}
For any maximal function $f^M$ in $\ker T_g$ there exist outer functions $O_1,O_2\in\ker T_g$, with $O_2$ square-rigid, such that
\begin{equation*}
O_1 = \Cg O_1 \preceq f^M \preceq \Cg O_2.
\end{equation*}
\end{cor}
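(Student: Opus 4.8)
The plan is to construct $O_1$ and $O_2$ separately, exploiting that the partial order $\preceq$ on maximal functions is governed, via Proposition~\ref{prop:partial_order_outer_kernels}, by the reverse inclusion of the minimal kernels of their outer factors. Throughout I use that $O = \Cg f^M$ is the outer factor of $f^M$ (Proposition~\ref{prop:IO_fact_max_func_with_conj}) and that \Cg\ maps $\ker T_g$ into itself, so that outer factors of maximal functions genuinely lie in $\ker T_g$.

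For $O_1$, I would start from an outer maximal function $\mathcal{O}$ in $\ker T_g$, which exists by Corollary~\ref{cor:outer_max_func}. Since $\mathcal{O}$ is outer, Proposition~\ref{prop:IO_fact_max_func_with_conj} gives $\Cg\mathcal{O} = c\,\mathcal{O}$ for some unimodular constant $c$ (its outer factor being $\mathcal{O}$ itself up to a constant). Choosing $\mu$ with $\mu^2 = c$ and setting $O_1 = \mu\mathcal{O}$, antilinearity of \Cg\ yields $\Cg O_1 = \bar\mu\,\Cg\mathcal{O} = \bar\mu c\,\mathcal{O} = \mu\mathcal{O} = O_1$, so $O_1$ is a $\Cg$-fixed outer function in $\ker T_g$. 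Being outer, its inner factor is a unimodular constant, which divides the inner factor of any maximal function; hence $O_1 \preceq f^M$ directly from Definition~\ref{def:partial_order_max_functs}.

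For $O_2$, the idea is to pass to a smaller Toeplitz kernel. The minimal kernel $\KKmin(O) = \ker T_{\ol z\,\ol O/O}$ is a nontrivial Toeplitz kernel contained in $\ker T_g$, since $O \in \ker T_g$ is outer. Applying the result of \cite{CMP16} to $\KKmin(O)$, there is a maximal function in $\KKmin(O)$ whose outer factor $O_2$ is square-rigid; as an outer factor it is the \Cg-conjugate of a maximal function and therefore lies in $\KKmin(O) \subset \ker T_g$. Then $\Cg O_2$ is a maximal function by Corollary~\ref{cor:outer+conj=max_func}, and square-rigidity of $O_2$ means $\KKmin(O_2) = \spn\{O_2\}$ by Proposition~\ref{prop:square.rigid}, whence $\KKmin(O_2) \subset \KKmin(O)$ because $O_2 \in \KKmin(O)$. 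By Proposition~\ref{prop:partial_order_outer_kernels} this inclusion is exactly $f^M \preceq \Cg O_2$, completing the chain $O_1 = \Cg O_1 \preceq f^M \preceq \Cg O_2$.

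The main obstacle is this second step: producing a square-rigid outer function that still sits \emph{above} $f^M$ in the order. The resolution is not to invoke \cite{CMP16} on $\ker T_g$ itself, which would yield a square-rigid maximal function unrelated to $f^M$, but to invoke it on the sub-kernel $\KKmin(O)$ generated by the outer factor of $f^M$. This forces the square-rigid function into $\KKmin(O)$, and Proposition~\ref{prop:partial_order_outer_kernels} then converts the containment $\spn\{O_2\} \subset \KKmin(O)$ into the required order relation. The remaining care is bookkeeping: verifying that \Cg\ preserves $\ker T_g$ so that outer factors stay in the kernel, and that square-rigidity is precisely the identity $\KKmin(O_2)=\spn\{O_2\}$.
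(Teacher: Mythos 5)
Your proof is correct and is essentially the derivation the paper intends (the corollary is stated there as an immediate consequence of the preceding results, with no separate proof): the outer maximal function of Corollary \ref{cor:outer_max_func}, suitably normalised so that it is fixed by \Cg, gives $O_1$, and applying the square-rigidity result of \cite{CMP16} to the sub-kernel $\KKmin(\Cg f^M)$ rather than to $\ker T_g$ itself is precisely the step needed to make $\Cg O_2$ dominate $f^M$ via Proposition \ref{prop:partial_order_outer_kernels}. The only slip is cosmetic: the outer factor of a maximal function of $\KKmin(O)$ is its image under the conjugation of that sub-kernel, namely $C_{\ol z\,\ol O/O}$, not under \Cg\ (the two differ by the inner factor $I$ of $f^M$), but either way that outer factor lies in $\KKmin(O)$, which is all your argument uses.
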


From Theorem \ref{thm:max_func_charact_with_conj} and Corollary \ref{cor:outer_max_func} we see moreover that there are elements of $\ker T_g$ on which the conjugation \Cg\ acts as the identity or a constant multiple of the identity, such as the outer maximal functions in $\ker T_g$.
For those functions we have $\Cg f = \lambda f$, $f\neq 0$, where we must of course have $|\lambda|=1$.
If $\Cg f = \lambda f$, writing $\lambda = a / \ol a$ with $a\in\CC$ we have that $\Cg (af) = af$, so $af$ is invariant under \Cg.
One may then ask which other elements of $\ker T_g$ are eigenfunctions for some eigenvalue $\lambda$.
The next proposition describes all such functions in terms of maximal functions.

\begin{thm}
\label{thm:Cg_eigenfunctions}
Let $f\in\ker T_g$ and let $f=\alpha O$ be an inner-outer factorisation.
Then the following are equivalent:
\begin{enumerate}[(i)]
\item $\Cg f = \lambda f$ for some $\lambda\in\CC$, $|\lambda|=1$;

\item $\alpha f$ is a maximal function in $\ker T_g$,

\item $\alpha \KKmin (f) \subset \ker T_g$ and $\ker T_g$ is the minimal Toeplitz kernel containing $\alpha \KKmin (f)$.
\end{enumerate}
\end{thm}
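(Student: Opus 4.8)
The plan is to establish the two equivalences (i)$\,\Leftrightarrow\,$(ii) and (ii)$\,\Leftrightarrow\,$(iii) separately, drawing (i)$\,\Leftrightarrow\,$(ii) from the maximal-function characterisation of Theorem~\ref{thm: maximal function characterization} together with the conjugation identities of Proposition~\ref{prop:IO_fact_max_func_with_conj}, and (ii)$\,\Leftrightarrow\,$(iii) from the symbol description of minimal kernels in \eqref{eq:f=IO.max.gives.unimodular.symbol}. For (i)$\,\Rightarrow\,$(ii) I would start from $\Cg f=\lambda f$, that is $\ol g\,\ol z\,\ol f=\lambda f$, substitute $f=\alpha O$ and use $|g|=1$ to solve for the symbol, obtaining $g=\ol\lambda\,\ol z\,\ol{\alpha}^{2}\,\ol O/O$. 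Multiplying by $\alpha f=\alpha^{2}O$ then gives $g\,(\alpha f)=\ol\lambda\,\ol z\,\ol O=\ol z\,\ol{\lambda O}$ with $\lambda O$ outer, so Theorem~\ref{thm: maximal function characterization} shows that $\alpha f$ is a maximal function in $\ker T_g$. This direction is a routine symbol computation.

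For (ii)$\,\Rightarrow\,$(i) the idea is to read off the outer factor of $\alpha f$ in two ways. Directly, $\alpha f=\alpha^{2}O$ has outer factor $O$; on the other hand, since $\alpha f$ is assumed maximal, Proposition~\ref{prop:IO_fact_max_func_with_conj} identifies its outer factor as $\Cg(\alpha f)$, so $\Cg(\alpha f)=cO$ for some unimodular constant $c$. I would then combine this with the pointwise identity $\Cg(\alpha f)=\ol g\,\ol z\,\ol{\alpha f}=\ol\alpha\,(\ol g\,\ol z\,\ol f)=\ol\alpha\,\Cg f$, valid because both $f$ and $\alpha f$ belong to $\ker T_g$, to get $\ol\alpha\,\Cg f=cO$ and hence $\Cg f=c\,\alpha O=c\,f$, which is (i).

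For (ii)$\,\Leftrightarrow\,$(iii) the engine is the \emph{unconditional} inclusion $\alpha\KKmin(f)\subset\KKmin(\alpha f)$. Writing the minimal kernels via \eqref{eq:f=IO.max.gives.unimodular.symbol} as $\KKmin(f)=\ker T_{\ol z\,\ol\alpha\,\ol O/O}$ and $\KKmin(\alpha f)=\ker T_{\ol z\,\ol{\alpha}^{2}\,\ol O/O}$, a one-line check shows that if $h$ satisfies $\ol z\,\ol\alpha\,(\ol O/O)\,h\in\Hm$ then $\ol z\,\ol{\alpha}^{2}\,(\ol O/O)\,(\alpha h)=\ol z\,\ol\alpha\,(\ol O/O)\,h\in\Hm$, so $\alpha h\in\KKmin(\alpha f)$. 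Granting this, (ii)$\,\Rightarrow\,$(iii) follows since $\alpha f$ maximal means $\ker T_g=\KKmin(\alpha f)$, which contains $\alpha\KKmin(f)$ and, because $\alpha f\in\alpha\KKmin(f)$, is contained in every Toeplitz kernel containing that set. Conversely, (iii) gives $\alpha f\in\alpha\KKmin(f)\subset\ker T_g$, so $\KKmin(\alpha f)\subset\ker T_g$; since $\KKmin(\alpha f)$ is itself a Toeplitz kernel containing $\alpha\KKmin(f)$, minimality of $\ker T_g$ yields $\ker T_g\subset\KKmin(\alpha f)$, hence equality and (ii).

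I expect the main obstacle to be conceptual rather than computational: it is the correct handling of the phrase ``minimal Toeplitz kernel containing $\alpha\KKmin(f)$'', which forces me to prove the inclusion $\alpha\KKmin(f)\subset\KKmin(\alpha f)$ independently of hypothesis (ii) so that it can be used to squeeze $\ker T_g$ between the two sides. By contrast, the symbol algebra underlying (i)$\,\Leftrightarrow\,$(ii) is straightforward once the inner/outer factorisation $\alpha f=\alpha^{2}O$ is tracked carefully.
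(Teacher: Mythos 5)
Your proof is correct and follows essentially the same route as the paper's: (i)$\iff$(ii) rests on the identity $\Cg(\alpha h)=\ol\alpha\,\Cg h$ combined with Theorem \ref{thm:max_func_charact_with_conj} (equivalently Theorem \ref{thm: maximal function characterization}), and (ii)$\iff$(iii) on squeezing $\ker T_g$ against $\KKmin(\alpha f)$. The only cosmetic difference is that where the paper invokes Theorem \ref{thm:multiplicadores_into_kernels} (with $w=\alpha$) to pass from $\alpha f\in\ker T_h$ to $\alpha\KKmin(f)\subset\ker T_h$, you prove the special case $\alpha\KKmin(f)\subset\KKmin(\alpha f)$ directly from the symbols given by \eqref{eq:f=IO.max.gives.unimodular.symbol}, which serves the same purpose.
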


\begin{proof}
$(i) \iff (ii)$
We have that 
\begin{equation*}
\Cg f = \lambda f \iff \Cg (\alpha O) = \lambda \alpha O \iff \Cg (\alpha^2 O) = \lambda O
\end{equation*}
which, by Theorem \ref{thm:max_func_charact_with_conj}, is equivalent to $\alpha^2 O$ being a maximal function in $\ker T_g$.

$(ii)\implies (iii)$
Since $\alpha \KKmin (f) \subset \Hp$ and $\alpha f \in \ker T_g$, by Theorem \ref{thm:multiplicadores_into_kernels} we have that $\alpha \KKmin (f) \subset \ker T_g$.
Moreover, if $\alpha \KKmin (f) \subset \ker T_h$, for some $h\in\Linf\bs\zero$, then $\ker T_h \supset \KKmin (\alpha f) = \ker T_g$, so $\ker T_g$ is the minimal kernel containing $\alpha \KKmin (f)$.

$(iii)\implies (ii)$ Let $(iii)$ hold, then $\alpha f \in \ker T_g$.
If $\alpha f \in \ker T_h$ for some $h\in\Linf\bs\zero$, then $\alpha \KKmin (f) \subset \ker T_h$ by Theorem \ref{thm:multiplicadores_into_kernels}, since $f$ is a maximal function in $\KKmin (f)$ and $\alpha \KKmin (f) \subset L^2$.
Therefore, $\ker T_g \subset \ker T_h$, as $\ker T_g$ is the minimal Toeplitz kernel containing $\alpha \KKmin (f)$.
This means that $\KKmin (\alpha f) = \ker T_g$.
\end{proof}

\begin{ex}
Two simple examples in $K_{z^5}$:
we see that $(1-z)^4$ is an outer maximal function and an eigenvector of $C_{\ol z^5}$ (for the eigenvalue 1) since $C_{\ol z^5} (1-z)^4 = (1-z)^4$;
we also have that $z^2(1-z)^2$ is a maximal function in $K_{z^5}$ (since $C_{\ol z^5} z^2(1-z)^2 = z^2-1$ which is outer), so by Theorem \ref{thm:Cg_eigenfunctions} (ii) we have that $z(1-z^2)$ is an eigenvector of $C_{\ol z^5}$.
\end{ex}


\section{Application to model spaces}
\label{sec:Applications}

We now apply the previous results to model spaces, a particularly well known class of Toeplitz kernels, thus recovering and extending, using a maximal function perspective, some known properties of those spaces.

Let $g=\ol \theta$, $\ker T_g = K_\theta$ where $\theta$ is an inner function, and let $C_\theta$ be the usual conjugation on $K_\theta$: $C_\theta f = \theta \ol z \ol f$, for every $f\in K_\theta$.
For any $\lambda\in\DD$, consider \kk\ and \kktil\ as in \eqref{eq:def_k_k^til}.
We have \kk, $\kktil\in K_\theta$ for all $\lambda\in\DD$, with $\kktil\in\Hinf$ and $\kk \in \GG\Hinf$;
\kktil\ is a maximal function in $K_\theta$ (see Example \ref{ex:ktil_is_max_func}).
Since $C_\theta \kktil = \kk$, an inner-outer factorization of \kktil, by Proposition \ref{prop:IO_fact_max_func_with_conj}, is
\begin{equation}
\label{eq:ktil_IO_fact}
\kktil = I O, \text{ where } I = \frac{\kktil}{\kk} \text{ is inner and } O=\kk.
\end{equation}

Thus \kktil\, for any $\lambda \in\DD$, is an example of a maximal function with square-rigid outer factor.
If $\theta(0) = 0$, then $\tilde k^\theta_0 = \ol z \theta$ is an inner function which is maximal in $K_\theta$, illustrating Proposition \ref{prop:inner_maximal_function}.

We can also obtain maximal functions in $K_\theta$ which are outer, using Corollary \ref{cor:outer_max_func}:
\begin{equation*}
O^M = \kk \left(\mu + \frac{\kktil}{\kk} \right) = \mu \kk + \kktil, \text{ with } \mu\in\CC, |\mu|=1.
\end{equation*}
These functions satisfy $C_\theta O^M = \ol \mu O^M$, they are eigenvectors of $C_\theta$ for the eigenvalue $\ol\mu$.
If we take $\mu=1$ then we get that $C_\theta (\kk + \kktil) = \kk + \kktil$ and we can write
\begin{equation}
K_\theta = \ker T_{\ol z \frac{\ol{\kk + \kktil}}{\kk + \kktil}} = \ker T_\frac{(\ol z + 1) (\ol{\kk + \kktil})}{(z+1) (\kk + \kktil)}
\end{equation}
(see Remark \ref{rem:kernel_symbol_outer_Hayashi}).
Note that, if $\dim\ker T_g >1$, an outer maximal function cannot be square-rigid.
For example $1+\alpha$ is an outer maximal function in $K_\theta = K_{z\alpha}$, where we assume that $\alpha\notin\CC$, but it is not square-rigid since
\begin{equation*}
\ker T_{\frac{1+\ol\alpha}{1+\alpha}} = \ker T_{\ol\alpha} = K_\alpha.
\end{equation*}

We now consider maximal function factorisations and their modified versions for the complex conjugate of inner functions.
From \eqref{eq:ktil_IO_fact} and the modified maximal function factorisation \eqref{eq:modified.max.func.fact} we have the representation
\begin{equation*}
\theta = B_{\lambda_1} \frac{\kktil}{\kk}\ \frac{\kk (1-\ol{\lambda_1} z)}{\ol\kk (1 - \lambda_1 \ol z)}, \quad \lambda,\lambda_1 \in\DD.
\end{equation*}

This can also be seen as a bounded factorisation of the form
\begin{equation*}
\theta = \frac{(\ol\kk)^{-1}}{1- \lambda_1 \ol z}\ \theta_{\lambda_1,\lambda}\ (\kk (1 - \ol{\lambda_1} z)),\ \text{ with } \theta_{\lambda_1,\lambda} = B_{\lambda_1} \frac{\kktil}{\kk}.
\end{equation*}
As a consequence, we have:

\begin{prop}
\label{prop:equiv.theta.and.theta.lambda1.lambda}
For all $\lambda,\lambda_1 \in\DD$, we have that $\theta \sim \theta_{\lambda_1,\lambda}$, $K_\theta \cong K_{\theta_{\lambda_1,\lambda}}$ and
\begin{equation}
\label{eq:equiv.theta.and.theta.lambda1.lambda_2}
K_\theta = \kk (1 - \ol{\lambda_1} z) K_{\theta_{\lambda_1,\lambda}}.
\end{equation}
\end{prop}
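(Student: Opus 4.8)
The plan is to prove the three assertions in order, treating the multiplier identity \eqref{eq:equiv.theta.and.theta.lambda1.lambda_2} as the crux from which the rest follows easily. The equivalence $\theta\sim\theta_{\lambda_1,\lambda}$ will simply be read off from the bounded factorisation displayed just before the statement; identity \eqref{eq:equiv.theta.and.theta.lambda1.lambda_2} will be obtained from Theorem \ref{thm:multiplicadores_onto_kernels}; and the isomorphism $K_\theta\cong K_{\theta_{\lambda_1,\lambda}}$ will then be immediate because the multiplier lies in $\GG\Hinf$.

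For the first assertion I would note that in the factorisation $\theta = \frac{(\ol\kk)^{-1}}{1-\lambda_1\ol z}\,\theta_{\lambda_1,\lambda}\,\bigl(\kk(1-\ol{\lambda_1}z)\bigr)$ the left factor lies in $\GG\ol\Hinf$, the right factor $w:=\kk(1-\ol{\lambda_1}z)$ lies in $\GG\Hinf$ (since $\kk\in\GG\Hinf$ and $1-\ol{\lambda_1}z$ does not vanish on $\ol{\DD}$ for $\lambda_1\in\DD$), and the middle factor $\theta_{\lambda_1,\lambda}=B_{\lambda_1}\,\kktil/\kk$ is inner; this is exactly a bounded factorisation with exponent $1$, so $\theta\sim\theta_{\lambda_1,\lambda}$ by definition. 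For \eqref{eq:equiv.theta.and.theta.lambda1.lambda_2} I would apply Theorem \ref{thm:multiplicadores_onto_kernels} with $g=\ol\theta$, $h=\ol{\theta_{\lambda_1,\lambda}}$ and the same $w$, so that $\ker T_g=K_\theta$ and $\ker T_h=K_{\theta_{\lambda_1,\lambda}}$. As $w^{\pm1}\in\Hinf\subset\NN$, the inclusions $wK_{\theta_{\lambda_1,\lambda}}\subset L^2$ and $w^{-1}K_\theta\subset L^2$ hold trivially, and it remains only to check condition (ii). Using the factorisation of $\theta$ one computes
\[
\frac{g}{h}=\frac{\ol\theta}{\ol{\theta_{\lambda_1,\lambda}}}
=\ol{\left(\frac{\kk(1-\ol{\lambda_1}z)}{\ol\kk(1-\lambda_1\ol z)}\right)}
=\frac{\ol\kk(1-\lambda_1\ol z)}{\kk(1-\ol{\lambda_1}z)}=\frac{\ol w}{w},
\]
which is of the required form $\frac{\ol w}{w}\frac{\ol{O_1}}{\ol{O_2}}$ with the outer functions $O_1=O_2=1$. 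Hence Theorem \ref{thm:multiplicadores_onto_kernels} gives $wK_{\theta_{\lambda_1,\lambda}}=K_\theta$, i.e. \eqref{eq:equiv.theta.and.theta.lambda1.lambda_2}.

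Finally, since $w\in\GG\Hinf$, multiplication by $w$ is a bounded, boundedly invertible linear map $K_{\theta_{\lambda_1,\lambda}}\to K_\theta$ (its inverse being multiplication by $w^{-1}\in\Hinf$), so \eqref{eq:equiv.theta.and.theta.lambda1.lambda_2} yields $K_\theta\cong K_{\theta_{\lambda_1,\lambda}}$. I expect no genuine obstacle in this argument: the only points requiring care are confirming that $\theta_{\lambda_1,\lambda}$ is a nonconstant inner function, so that $K_{\theta_{\lambda_1,\lambda}}\neq\zero$ and Theorem \ref{thm:multiplicadores_onto_kernels} applies (this holds because $B_{\lambda_1}$ is a nonconstant Blaschke factor and $\kktil/\kk$ is inner), and the exact cancellation showing $g/h=\ol w/w$ with trivial outer part, which is what reduces condition (ii) to the choice $O_1=O_2=1$.
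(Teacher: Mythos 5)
Your proposal is correct and follows essentially the same route as the paper, which states the proposition as an immediate consequence of the displayed bounded factorisation $\theta = \frac{(\ol\kk)^{-1}}{1-\lambda_1\ol z}\,\theta_{\lambda_1,\lambda}\,(\kk(1-\ol{\lambda_1}z))$; you simply make explicit the appeal to Theorem \ref{thm:multiplicadores_onto_kernels} with $w=\kk(1-\ol{\lambda_1}z)\in\GG\Hinf$ and the verification that $\ol\theta/\ol{\theta_{\lambda_1,\lambda}}=\ol w/w$. The added checks (nonconstancy of $\theta_{\lambda_1,\lambda}$, invertibility of the multiplier) are exactly the details the paper leaves implicit.
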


For $\lambda_1 = 0$, \eqref{eq:equiv.theta.and.theta.lambda1.lambda_2} corresponds to Proposition 25 in \cite{Cro94}.

Let us now see how the multiplier between the two model spaces in \eqref{eq:equiv.theta.and.theta.lambda1.lambda_2} relates the norm of an element of $K_\theta$ with its image in $K_{\theta_{\lambda_1,\lambda}}$.
Let $f_\theta$ be any element in $K_\theta$.
Using the equality
\begin{equation*}
\label{eq:fact_inverse_kk}
(\kk)^{-1}= (1- \ol\lambda z) \left(1 + \frac{\ol{\theta (\lambda)}}{1 - \ol{\theta(\lambda)}\theta} \theta \right),
\end{equation*}
we have 
\begin{align*}
\left|\left| 
	\frac{(\kk)^{-1}}{1- \ol\lambda_1 z} f_\theta 
\right|\right|^2
& = 
\left|\left|
	\frac{1- \ol\lambda z}{1- \ol\lambda_1 z} 
	\left( 
		1 + \frac{\ol{\theta (\lambda)}}{1 - \ol{\theta(\lambda)}\theta} \theta 
	\right)
	f_\theta  
\right|\right|^2 
=  \\
& = 
\left|\left| 
	\frac{1- \ol\lambda z}{1- \ol\lambda_1 z} f_\theta 
\right|\right|^2
+
|\theta(\lambda)|^2 
\left|\left| 
	\frac{(\kk)^{-1}}{1- \ol\lambda_1 z} f_\theta 
\right|\right|^2
+  \\
& \qquad +
2 \Re \left\langle 
	\frac{1- \ol\lambda z}{1- \ol\lambda_1 z}
	\frac{\ol{\theta (\lambda)}}{1 - \ol{\theta(\lambda)}\theta} 
	\theta f_\theta 
	, 
	\frac{1- \ol\lambda z}{1- \ol\lambda_1 z} f_\theta \right\rangle. 
\end{align*}
Choosing $\lambda_1=\lambda$, the inner product in the last term is zero.
Thus we get, in that case,
\begin{equation*}
( 1-|\theta(\lambda)|^2 ) \left|\left| \frac{1}{1 - \ol{\theta(\lambda)} \theta }  f_\theta \right|\right|^2  =  \left|\left| f_\theta \right|\right|^2,
\end{equation*}
meaning that the multiplier in \eqref{eq:equiv.theta.and.theta.lambda1.lambda_2}, for $\lambda_1=\lambda$, preserves the norm, up to a constant factor depending only on $\theta(\lambda)$.
Using the notation $\theta_\lambda := \theta_{\lambda,\lambda}$, we have then:

\begin{prop}
\label{prop:Crofoot_multiplier}
Let $\theta$ be an inner function and let
$\theta_\lambda = B_\lambda \kktil / \kk = \frac{\theta - \theta(\lambda)}{1 - \ol{\theta(\lambda)} \theta}$.
Then $\theta \sim \theta_\lambda$, $K_\theta \cong K_{\theta_\lambda}$ with 
$K_\theta =  M_{\theta,\lambda} K_{\theta_\lambda}$ where 
$M_{\theta,\lambda} = (1-\ol{\theta(\lambda)}\theta) / (\sqrt{1-|\theta(\lambda)|^2})$
is an isometric multiplier from $K_{\theta_\lambda}$ onto $K_\theta$.
\end{prop}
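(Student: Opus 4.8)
The plan is to obtain this proposition as the diagonal specialisation $\lambda_1 = \lambda$ of Proposition \ref{prop:equiv.theta.and.theta.lambda1.lambda}, supplemented by the norm identity established in the computation immediately preceding the statement. Since $\theta_\lambda = \theta_{\lambda,\lambda}$ by definition, nearly everything is already in place; the only work is to recognise the advertised multiplier $M_{\theta,\lambda}$ as a constant normalisation of the multiplier produced by Proposition \ref{prop:equiv.theta.and.theta.lambda1.lambda}, and to reinterpret the norm identity as the isometry assertion.

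First I would invoke Proposition \ref{prop:equiv.theta.and.theta.lambda1.lambda} with $\lambda_1 = \lambda$. This immediately yields the first two assertions, $\theta \sim \theta_\lambda$ and $K_\theta \cong K_{\theta_\lambda}$, together with the representation $K_\theta = \kk(1-\ol\lambda z)\, K_{\theta_\lambda}$. Using the definition of \kk\ in \eqref{eq:def_k_k^til} I would simplify the multiplier to $\kk(1-\ol\lambda z) = 1 - \ol{\theta(\lambda)}\theta$. Because $M_{\theta,\lambda}$ differs from this function only through multiplication by the positive constant $\big(\sqrt{1-|\theta(\lambda)|^2}\,\big)^{-1}$, the representation $K_\theta = M_{\theta,\lambda}\, K_{\theta_\lambda}$ follows at once, and in particular $M_{\theta,\lambda}$ maps $K_{\theta_\lambda}$ onto $K_\theta$.

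It then remains to verify that this multiplier is isometric, and for this I would read off the norm identity obtained just before the statement (with $\lambda_1 = \lambda$), namely that for every $f_\theta \in K_\theta$
\[
( 1-|\theta(\lambda)|^2 )\left\| \frac{1}{1 - \ol{\theta(\lambda)} \theta} f_\theta \right\|^2 = \| f_\theta \|^2 .
\]
Read backwards, this says exactly that the map $g \mapsto M_{\theta,\lambda}\, g$, with $M_{\theta,\lambda} = (1-\ol{\theta(\lambda)}\theta)/\sqrt{1-|\theta(\lambda)|^2}$, sends each $g \in K_{\theta_\lambda}$ to an element of $K_\theta$ of the same norm; combined with the surjectivity already recorded, $M_{\theta,\lambda}$ multiplies $K_{\theta_\lambda}$ isometrically onto $K_\theta$, as claimed. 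The genuinely computational step—the expansion of the norm whose cross term is annihilated precisely by the choice $\lambda_1 = \lambda$—has already been carried out in the text, so the only delicate point, and hence the main ``obstacle'', is conceptual rather than technical: recognising that this single specialisation is what converts the general non-isometric multiplier of Proposition \ref{prop:equiv.theta.and.theta.lambda1.lambda} into the isometric one of Crofoot's classical result.
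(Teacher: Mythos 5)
Your proposal is correct and follows exactly the route the paper itself takes: Proposition \ref{prop:Crofoot_multiplier} is presented there as the specialisation $\lambda_1=\lambda$ of Proposition \ref{prop:equiv.theta.and.theta.lambda1.lambda}, with the multiplier $\kk(1-\ol\lambda z)=1-\ol{\theta(\lambda)}\theta$ normalised by the constant extracted from the norm computation carried out immediately before the statement. Nothing is missing.
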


\begin{rem}
\label{rem:Crofoot_transform}
We recover here a result by Crofoot in \cite[Theorem 10(a)]{Cro94}.
The inverse of the operator defined from $K_{\theta_\lambda}$ onto $K_\theta$ by multiplication by $M_{\theta,\lambda}$, for each $\lambda\in\DD$, was called Crofoot's transformation, and denoted $J_\lambda$, by Sarason in \cite{Sa07}, where it was proven to define an isometry from $K_\theta$ onto $K_{\theta_\lambda}$ in a different way.
For $\lambda = 0$, in Proposition \ref{prop:Crofoot_multiplier}, we obtain Hayashi's representation of $K_\theta$, $K_\theta = M_{\theta,0} K_{\theta_0}$.
\end{rem}

\begin{rem}
Note that other factorisations of an inner function $\theta$ are possible, yielding different multipliers onto different model spaces, and one may be more convenient than the other, depending on the purpose.
For instance, if $\theta = B \in FBP$, using \eqref{eq:B-WH.fact}, one obtains that
\begin{equation}
\label{eq:mult_FBP_WH_fact}
K_B = r_+ K_{z^n}.
\end{equation}
where $n$ is the number of zeroes of $B$ and $r_+\in\GG\Hinf$ is a rational function.
This may be considered the simplest description of the model space $K_B$, since $K_{z^n}$ consists of polynomials.
It follows from \eqref{eq:mult_FBP_WH_fact} and Hayashi's representation in Theorem \ref{thm:Hayashi_rep} that every non-zero finite dimensional Toeplitz kernel can be represented in the form $\ker T_g = w K_{z^n}$ with $n= \dim \ker T_g$.
\end{rem}

\section*{Acknowledgments}
Research partially funded by Fundação para a Ciência e Tecnologia (FCT), Portugal, through grant No. UID/4459/2025.
Also, the work of the second author was supported by Fundação para a Ciência e Tecnologia (FCT), Portugal, through CAMGSD PhD fellowship UI/BD/153700/2022.

\bibliographystyle{apalike}
\bibliography{references}

\end{document}